\newtheorem{theorem}{Theorem}[section]
\newtheorem{proposition}[theorem]{Proposition}
\newtheorem{lemma}[theorem]{Lemma}
\newtheorem{corollary}[theorem]{Corollary}
\theoremstyle{definition}
\theoremstyle{remark}
\newtheorem{remark}[theorem]{Remark}
 \numberwithin{equation}{section}
\newcommand{\N}{\mathbb N}
\newcommand{\C}{\mathbb C}
\newcommand{\R}{\mathbb R}
\newcommand{\dx}{{\rm d}x }
\newcommand{\dxi}{{\rm d}\xi }
\newcommand{\dt}{{\rm d}t }
\tikzset{join/.code=\tikzset{after node path={%
\ifx\tikzchainprevious\pgfutil@empty\else(\tikzchainprevious)%
edge[every join]#1(\tikzchaincurrent)\fi}}}
\tikzset{>=stealth',every on chain/.append style={join},
         every join/.style={->}}
\tikzstyle{labeled}=[execute at begin node=$\scriptstyle,
\begin{document}
\title[nonlinear theory of infrahyperfunctions]{A nonlinear theory of infrahyperfunctions}
\author[A. Debrouwere]{Andreas Debrouwere}
\address{Department of Mathematics: Analysis, Logic and Discrete Mathematics, Ghent University, Krijgslaan 281, 9000 Gent, Belgium}
\email{Andreas.Debrouwere@UGent.be}
\thanks{A. Debrouwere gratefully acknowledges support by Ghent University, through a BOF Ph.D.-grant.}

\author[H. Vernaeve]{Hans Vernaeve}
\address{Department of Mathematics: Analysis, Logic and Discrete Mathematics, Ghent University, Krijgslaan 281, 9000 Gent, Belgium}
\email{Hans.Vernaeve@UGent.be}
\thanks{H. Vernaeve was supported by grant 1.5.138.13N of the Research Foundation Flanders FWO}

\author[J. Vindas]{Jasson Vindas}
\thanks{J. Vindas was supported by Ghent University, through the BOF-grants 01N01014 and 01J11615.}
\address{Department of Mathematics: Analysis, Logic and Discrete Mathematics, Ghent University, Krijgslaan 281, 9000 Gent, Belgium}
\email{Jasson.Vindas@UGent.be}
\subjclass[2010]{Primary  46F30. Secondary 46F15.}
\keywords{Generalized functions; hyperfunctions; Colombeau algebras; multiplication of infrahyperfunctions; sheaves of infrahyperfunctions; quasianalytic distributions}
\begin{abstract}We develop a nonlinear theory for infrahyperfunctions (also referred to
as \emph{quasianalytic (ultra)distributions} by L.\ H\"{o}rmander). In the hyperfunction case our work can be summarized as follows. We construct a differential algebra that contains the space of hyperfunctions as a linear differential subspace and in which the multiplication of real analytic functions coincides with their ordinary product. Moreover, by proving an analogue of Schwartz's impossibility result for hyperfunctions, we show that this embedding is optimal. Our results fully solve an earlier question raised by M.\ Oberguggenberger.
\end{abstract}
\maketitle
\section{Introduction}
The nonlinear theory of generalized functions was initiated by J.-F.\ Colombeau \cite{Colombeau84, Colombeau85} who constructed  a differential algebra containing the space of distributions as a linear differential subspace and the space of smooth functions as a subalgebra. Algebras of generalized functions provide a framework for linear equations with strongly singular data and nonlinear equations \cite{Hormann-DeHoop,Hormann-Ober-Pilipovic}. 
We refer to the monograph \cite{Ober} for many interesting applications to the theory of partial differential equations.

On the other hand, for several natural linear problems the space of distributions is not the suitable setting, e.g.\ Cauchy problems for weakly hyperbolic equations -- even with smooth coefficients -- are in general not well-posed in the space of distributions \cite{Colombini83,Colombini82}.
Such considerations motivated the search for and study of new spaces of linear generalized functions 
like ultradistributions \cite{Komatsu} and hyperfunctions \cite{Sato, Morimoto}. For instance, under suitable conditions the above Cauchy problems become well-posed in spaces of ultradistributions \cite{Colombini83,Colombini03,Garetto}, while the space of hyperfunctions is the convenient setting for the treatment of partial differential equations with real analytic coefficients \cite{Bony, Komatsu71}.

In \cite{DVV} we presented a nonlinear theory of non-quasianalytic ultradistributions (cf.\ \cite{Delcroix, Gramchev}). The goal of this paper is to further extend these results  and develop a nonlinear theory for hyperfunctions. In fact, we consider general spaces of infrahyperfunctions of class $\{M_p\}$ \cite{Hormander} for a quasianalytic weight sequence $M_p$ \cite{Komatsu} and construct a differential algebra that contains the space of infrahyperfunctions of class $\{M_p\}$ as a differential subspace and in which the multiplication of quasianalytic functions of class $\{M_p\}$ coincides with their pointwise product. Moreover, by establishing a Schwartz impossibility type result for infrahyperfunctions, we show that our embeddings are optimal in the sense of being consistent with the pointwise multiplication of ordinary functions. The case $M_p = p!$ corresponds to the hyperfunction case, thereby fully settling a question of M.\ Oberguggenberger  \cite[p.\ 286, Prob.\ 27.2]{Ober}. We mention the pioneer article \cite{Pil2005} by S. Pilipovi\'{c}, where a so-called algebra of megafunctions in dimension 1 was introduced.
 We also believe that our work puts the notion of `very weak solution' introduced by C.\ Garetto and M.\ Ruzhansky in their study on well-posedness of weakly hyperbolic equations with time dependent nonregular coefficients \cite{garetto-r2015} in a broader perspective and could possibly lead to a natural framework for extensions of their results.

This paper is organized as follows. In the preliminary Section \ref{preliminaries} we   collect some useful properties of the spaces of quasianalytic functions and their duals (quasianalytic functionals) that will be used later on in the article. Sheaves of infrahyperfunctions are introduced in Section \ref{sect-infrahyperfunctions}. They were first constructed by L.\ H\"ormander\footnote{He uses the name quasianalytic distributions in \cite{Hormander}; the terminology infrahyperfunctions comes from \cite{de Roever, Petzsche84}.} \cite{Hormander} but we give a short proof of their existence based on H\"{o}rmander's support theorem for quasianalytic functionals  and a general method for the construction of flabby sheaves due to K.\ Junker and Y.\ Ito \cite{Ito, Junker}. In Section \ref{section algebra} we define our algebras of generalized functions, provide a null-characterization for the space of negligible sequences, and show that the totality of our algebras forms a sheaf on $\R^d$. Due to the absence of nontrivial compactly supported quasianalytic functions, the latter is much more difficult to establish than for Colombeau algebras of non-quasianalytic type. The proof is based on some of our recent results on the solvability of the Cousin problem for vector-valued quasianalytic  functions \cite{DV}. We mention that the motivation for \cite{DV} was precisely this problem. In Section \ref{section-embedding} we embed the infrahyperfunctions of class $\{M_p\}$  into our algebras and show that the multiplication of ultradifferentiable functions of class $\{M_p\}$ is preserved under the embedding we shall construct. Finally, we present a Schwartz impossibility type result for infrahyperfunctions which shows that our embeddings are optimal.
 
\section{Spaces of quasianalytic functions and their duals}\label{preliminaries}
In this section we fix the notation and introduce spaces of quasianalytic functions and their duals, the so-called spaces of quasianalytic functionals. We also discuss the notion of support for quasianalytic functionals and state a result about the solvability of the Cousin problem for vector-valued quasianalytic functions \cite{DV} that will be used in Section \ref{section algebra}.

Set $\N_0 =  \N \cup \{0\}$. Let $(M_p)_{p \in \N_0}$ be a weight sequence of positive real numbers and set $m_p: = M_p/ M_{p-1}$, $p \in \N$. We shall always assume that $M_0=1$ and that $m_p \rightarrow \infty$ as $p \rightarrow \infty$. Furthermore, we will make use of various of the following conditions:
\begin{itemize}
\item [$(M.1)$\:] $M^{2}_{p}\leq M_{p-1}M_{p+1},$  $p\in \N$,
\item [$(M.2)'$]$M_{p+1}\leq A H^{p+1} M_p$, $p\in\N_0$, for some $A,H \geq 1$,
\item [$(M.2)$\:]$M_{p+q}\leq A H^{p+q} M_pM_q$, $p,q\in\N_0$, for some $A,H \geq 1$,
\item[$(M.2)^\ast$] $2m_p \leq Cm_{pQ}$, $p \in \mathbb{N}$, for some $Q \in \N$, $C > 0$,
\item [$(QA)$\:] $\displaystyle \sum_{p=1}^{\infty}\frac{1}{m_{p}}=\infty, $
\item[$(NE)$\:] $p!\leq Ch^{p}M_{p},$ $p\in\N_0$, for some $C,h > 0$.
\end{itemize}
We write $M_\alpha = M_{|\alpha|}$ for $\alpha \in \N_0^d$. The associated function of $M_p$ is defined as
$$
M(t):=\sup_{p\in\N_0}\log\frac{t^p}{M_p},\qquad t > 0,
$$
and $M(0)=0$. We define $M$ on $\R^d$ as the radial function $M(x) = M(|x|)$, $x \in \R^d$. As usual, the relation $M_p\subset N_p$ between two weight sequences means that there are $C,h>0$ such that 
$M_p\leq Ch^{p}N_{p},$ $p\in\N_0$. The stronger relation $M_p\prec N_p$ means that the latter inequality remains valid for every $h>0$ and a suitable $C=C_{h}>0$. 
Condition $(NE)$ can therefore be formulated as $p! \subset M_p$. We refer to \cite{Komatsu, B-M-M} for the meaning of these conditions and their translation in terms of the associated function. In particular, under $(M.1)$, the assumption $(M.2)$ holds \cite[Prop.\ 3.6]{Komatsu} if and only if
$$
2M(t) \leq M(Ht) + \log A, \qquad  t \geq 0.
$$
Suppose $K$ is a regular compact subset of $\R^d$, that is, $\overline{\operatorname{int} K} = K$. For $h > 0$ we write $\mathcal{E}^{M_p,h}(K)$ for the Banach space of all $\varphi \in C^\infty(K)$ such that
$$
\| \varphi \|_{K,h} := \sup_{\alpha \in \N_0^d}\sup_{x \in K} \frac{|\varphi^{(\alpha)}(x)|}{h^{|\alpha|}M_{\alpha}} < \infty. 
$$
For an open set $\Omega$ in $\R^d$, we define
$$
\mathcal{E}^{\{M_p\}}(\Omega) = \varprojlim_{K \Subset \Omega} \varinjlim_{h \rightarrow \infty}\mathcal{E}^{M_p,h}(K).
$$
The elements of $\mathcal{E}^{\{M_p\}}(\Omega)$ are called \emph{ultradifferentiable functions of class $\{M_p\}$ (or Roumieu type)} \emph{in} $\Omega$. When $M_p = p!$ the space $\mathcal{E}^{\{M_p\}}(\Omega)$ coincides with the space $\mathcal{A}(\Omega)$ of real analytic functions in $\Omega$. By the Denjoy-Carleman theorem, $(QA)$ means that there are no nontrivial compactly supported ultradifferentiable functions of class $\{M_p\}$, or equivalently, that a function $\varphi \in \mathcal{E}^{\{M_p\}}(\Omega)$ which vanishes on an open subset that meets every connected component of $\Omega$, is necessarily identically zero. 

It should be mentioned that, under $(M.1)$, $(M.2)$ and $(NE)$, a weight sequence $M_p$ satisfies $(M.2)^*$ \cite[Thm.\ 14]{B-M-M} if and only if $\omega = M$ is a Braun-Meise-Taylor type weight function  \cite{B-M-T} (as defined in \cite[p.\ 426]{B-M-M}). In such a case, we have $\mathcal{E}^{\{M_p\}}(\Omega) = \mathcal{E}_{\{\omega\}}(\Omega)$ as locally convex spaces.

We write $\mathcal{R}$ for the family of all positive real sequences $(r_j)_{j \in \N_0}$ with $r_0 =1$ that increase (not necessarily strictly) to infinity. This set is partially ordered and directed by the relation $r_j \preceq s_j$, which means that there is a $j_0 \in \N_0$ such that $r_j \leq s_j$ for all $j \geq j_0$. Following Komatsu \cite{Komatsu3}, we removed the non-quasianalyticity assumption in his projective description of $\mathcal{E}^{\{M_p\}}(\Omega)$  and we found in \cite{DV} an explicit system of seminorms generating the topology of $\mathcal{E}^{\{M_p\}}(\Omega)$ also in the quasianalytic case.

\begin{lemma}\emph{(\cite[Prop.\ 4.8]{DV})}\label{systemnorms}    
Let $M_p$ be a weight function satisfying $(M.1)$, $(M.2)'$, and $(NE)$. A function $\varphi \in C^\infty(\Omega)$ belongs to $\mathcal{E}^{\{M_p\}}(\Omega)$ if and only if 
$$
\| \varphi \|_{K, r_j} := \sup_{\alpha \in \N_0^d}\sup_{x \in K} \frac{|\varphi^{(\alpha)}(x)|}{M_{\alpha}\prod_{j = 0}^{|\alpha|}r_j} < \infty,
$$
for all $K \Subset \Omega$ and $r_j \in \mathcal{R}$. Moreover, the topology of $\mathcal{E}^{\{M_p\}}(\Omega)$ is generated by the system of seminorms $\{\| \, \|_{K, r_j} \, : \, K \Subset \Omega, r_j \in \mathcal{R}\}$.
\end{lemma}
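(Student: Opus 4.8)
The plan is to separate the two assertions, reducing the set-theoretic equivalence to a statement about scalar sequences and then treating the identity of topologies as a projective description of a weighted inductive limit. Fix $K \Subset \Omega$ and $\varphi \in C^\infty(\Omega)$, and set
$$
a_n := \sup_{|\alpha| = n}\,\sup_{x \in K} \frac{|\varphi^{(\alpha)}(x)|}{M_\alpha}, \qquad n \in \N_0,
$$
which is finite since $K$ is compact and only finitely many $\alpha$ have $|\alpha| = n$. Because $M_\alpha$ and $\prod_{j=0}^{|\alpha|}r_j$ depend on $\alpha$ only through $|\alpha|$, the restriction $\varphi|_K$ lies in $\bigcup_{h} \mathcal{E}^{M_p,h}(K)$ precisely when $\limsup_n a_n^{1/n} < \infty$, while $\| \varphi \|_{K, r_j} < \infty$ is equivalent to $\sup_n a_n/\prod_{j=0}^n r_j < \infty$. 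Since membership in $\mathcal{E}^{\{M_p\}}(\Omega)$ is membership in $\bigcup_h \mathcal{E}^{M_p,h}(K)$ for every $K$, the first assertion follows from the elementary claim: for a nonnegative sequence $(a_n)$ one has $\limsup_n a_n^{1/n} < \infty$ if and only if $\sup_n a_n/\prod_{j=0}^n r_j < \infty$ for every $(r_j) \in \mathcal{R}$.

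First I would prove this claim. For the forward implication, suppose $a_n \le Ch^n$ and let $(r_j) \in \mathcal{R}$; choosing $j_0$ with $r_j \ge h$ for $j \ge j_0$ and using $r_j \ge 1$ gives $\prod_{j=0}^n r_j \ge h^{\,n-j_0+1}$ for $n \ge j_0$, so that $D:=\sup_n h^n/\prod_{j=0}^n r_j < \infty$ and the weighted seminorm is finite. For the converse I argue by contraposition: if $\limsup_n a_n^{1/n} = \infty$ there is a strictly increasing sequence $(n_k)$ with $R_k := a_{n_k}^{1/n_k} \uparrow \infty$ and $R_1 \ge 1$; putting $r_0 = 1$ and $r_j = \sqrt{R_k}$ for $n_{k-1} < j \le n_k$ defines a member of $\mathcal{R}$ with $\prod_{j=0}^{n_k} r_j \le R_k^{\,n_k/2}$, whence $a_{n_k}/\prod_{j=0}^{n_k} r_j \ge R_k^{\,n_k/2} \to \infty$. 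The same forward estimate yields $\| \varphi \|_{K, r_j} \le D\,\| \varphi \|_{K,h}$, so each seminorm $\| \cdot \|_{K, r_j}$ is continuous on every step $\mathcal{E}^{M_p,h}(K)$, hence on the inductive limit, hence on $\mathcal{E}^{\{M_p\}}(\Omega)$. This already shows that the topology generated by the $\mathcal{R}$-seminorms is coarser than that of $\mathcal{E}^{\{M_p\}}(\Omega)$.

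The substantial part is the reverse inclusion, that the system $\{\| \cdot \|_{K, r_j}\}$ is fundamental. As the topology of $\mathcal{E}^{\{M_p\}}(\Omega)$ is the projective limit over $K$ of the $(LB)$-topologies of $\mathcal{E}^{\{M_p\}}(K) = \varinjlim_h \mathcal{E}^{M_p,h}(K)$, it suffices to dominate each seminorm $p$ continuous on such an $(LB)$-space by some $\| \cdot \|_{K, r_j}$. Viewing $\mathcal{E}^{M_p,h}(K)$ as a weighted $\ell^\infty$-space on $\N_0^d \times K$ with weight $(\alpha,x) \mapsto (h^{|\alpha|}M_\alpha)^{-1}$, this is exactly the projective description of a weighted inductive limit $\mathcal{V}\ell^\infty$. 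A continuous $p$ satisfies $p \le C_h \| \cdot \|_{K,h}$ on each step, and the task is to manufacture, from the constants $C_h$, a single $(r_j) \in \mathcal{R}$ with $p \le C\,\| \cdot \|_{K, r_j}$; equivalently, one must show that the weights $(M_\alpha \prod_{j=0}^{|\alpha|} r_j)^{-1}$, $r \in \mathcal{R}$, form a cofinal subfamily of the maximal Nachbin family associated with the weights $(h^{|\alpha|}M_\alpha)^{-1}$.

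I expect this last step to be the main obstacle. It is the heart of Komatsu's projective description theorem and succeeds provided the inductive spectrum is regularly decreasing; here the conditions $(M.1)$ and $(M.2)'$ guarantee precisely this regularity of the weights $n \mapsto h^n M_n$, and $(NE)$ keeps the class nontrivial. The added difficulty beyond the non-quasianalytic setting is that the classical route is not available verbatim—there are no nontrivial compactly supported functions to localize with—so one must argue directly on the level of the weighted sequence space rather than through cut-offs. Supplying this direct argument is exactly the content of \cite{DV}, from which the cited Proposition $4.8$ is taken.
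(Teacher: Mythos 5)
The paper offers no internal proof to compare against: Lemma \ref{systemnorms} is imported verbatim from \cite[Prop.~4.8]{DV}, as the citation indicates. Measured against the statement itself, the first half of your proposal is correct and complete. The reduction to the scalar claim is legitimate, since both $M_\alpha$ and $\prod_{j=0}^{|\alpha|}r_j$ depend only on $|\alpha|$, and your proof of the claim works: the forward estimate $\sup_n h^n/\prod_{j=0}^n r_j < \infty$ is right, and in the contrapositive your choice $r_0=1$, $r_j=\sqrt{R_k}$ for $n_{k-1}<j\leq n_k$ does define an element of $\mathcal{R}$ (after passing to a subsequence so that $R_k$ is nondecreasing with $R_1\geq 1$) for which $a_{n_k}/\prod_{j=0}^{n_k}r_j\geq R_k^{n_k/2}\to\infty$. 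This is essentially Komatsu's classical sequence lemma from \cite{Komatsu3}, and, as you observe, the same estimate shows each $\|\cdot\|_{K,r_j}$ is continuous on every step $\mathcal{E}^{M_p,h}(K)$, hence on the inductive limit and on the projective limit over $K$; note that none of this uses $(M.1)$, $(M.2)'$, or $(NE)$, consistent with these hypotheses being needed only for the topological assertion.

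The genuine gap is in the second half, and it is twofold. First, the route you sketch --- realize $\mathcal{E}^{M_p,h}(K)$ inside a weighted $\ell^\infty$-type space on $\N_0^d\times K$ with weights $(h^{|\alpha|}M_\alpha)^{-1}$ and invoke projective description for a regularly decreasing weight system --- does not by itself settle the question: the jets $(\varphi^{(\alpha)}(x))_{\alpha,x}$ of smooth functions form a proper closed subspace of that co-echelon space, the linking maps of the spectrum are not compact (there is no decay in the $x$-variable), and the topological identity furnished by projective description of the \emph{ambient} space does not automatically restrict to a subspace: the inductive limit of the subspace steps may a priori be strictly finer than the induced topology. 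In the non-quasianalytic setting this transfer is effected with cut-off functions; under the present hypotheses (which allow quasianalytic classes) no such localization exists, and bridging exactly this is the content of \cite{DV}. Second, and decisively, your last sentence defers that step to \cite{DV} itself, which is circular as a proof of the cited Prop.~4.8. So the status of your proposal is: membership characterization fully proved, one inclusion of topologies proved, but the fundamental-system direction --- the substance of the lemma --- remains unproved rather than merely routine.
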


Suppose that the weight sequence $M_p$ satisfies $(M.1)$, $(M.2)'$, $(QA)$, and $(NE)$.  The elements of the dual space $\mathcal{E}'^{\{M_p\}}(\Omega)$ are called \emph{quasianalytic functionals of class $\{M_p\}$ (or Roumieu type)} \emph{in} $\Omega$, while those of $\mathcal{A}'(\Omega)$ are called \emph{analytic functionals in} $\Omega$. The properties $(M.1)$, $(M.2)'$, and $(NE)$ imply that the space of entire functions is dense in $\mathcal{E}^{\{M_p\}}(\Omega)$ \cite[Prop 3.2]{Hormander}. Hence, for any $\Omega' \subseteq \Omega$ and any other weight
sequence $N_p$ with $p! \subset N_p \subset M_p$ we may identify $\mathcal{E}'^{\{M_p\}}(\Omega')$ with a subspace of $\mathcal{E}'^{\{N_p\}}(\Omega)$. In particular, we always have
$\mathcal{E}'^{\{M_p\}}(\Omega') \subseteq  \mathcal{A}'(\Omega)$.

An \emph{ultradifferential operator of class} $\{M_p\}$ is an infinite order differential operator
$$ 
P(D) = \sum_{\alpha \in \N_0^d} a_\alpha D^\alpha, \qquad a_\alpha \in \C,
$$
($D^{\alpha}=(-i \partial)^{\alpha}$) where the coefficients satisfy the estimate 
$$
|a_\alpha| \leq \frac{CL^{|\alpha|}}{M_{|\alpha|} }
$$
for every $L>0$ and some $C=C_{L} > 0$. If $M_p$ satisfies $(M.2)$, then $P(D)$ acts continuously on $\mathcal{E}^{\{M_p\}}(\Omega)$ and hence it can be defined by duality on $\mathcal{E}'^{\{M_p\}}(\Omega)$. 

Next, we discuss the notion of support for quasianalytic functionals. For a compact $K$ in $\R^d$, we define the space of germs of ultradifferentiable functions on $K$ as
$$
\mathcal{E}^{\{M_p\}}[K] = \varinjlim_{K \Subset \Omega} \mathcal{E}^{\{M_p\}}(\Omega),
$$ 
a $(DFS)$-space.
Since
$$
\mathcal{E}^{\{M_p\}}(\R^d) \cong \varprojlim_{K \Subset \R^d} \mathcal{E}^{\{M_p\}}[K]
$$
as locally convex spaces and $\mathcal{E}^{\{M_p\}}(\R^d)$ is dense in each $\mathcal{E}^{\{M_p\}}[K]$, we have the following isomorphism of locally convex spaces
$$
\mathcal{E}'^{\{M_p\}}(\R^d) \cong \varinjlim_{K \Subset \R^d} \mathcal{E}'^{\{M_p\}}[K].
$$
Let $f \in \mathcal{E}'^{\{M_p\}}(\R^d)$. A compact $K \Subset \R^d$ is said to be an $\{M_p\}$-\emph{carrier of} 
$f$ if $f \in \mathcal{E}'^{\{M_p\}}[K]$. It is well known that for every $f \in \mathcal{A}'(\Omega)$ 
there is a smallest compact $K \Subset \R^d$ among the $\{p!\}$-carriers of $f$, called the \emph{support of} $f$ and denoted by $\operatorname{supp}
_{\mathcal{A}'}f$. This essentially follows from the cohomology of the sheaf of germs of analytic functions (see e.g. \cite{Morimoto}). An elementary proof based on 
the properties of the Poisson transform of analytic functionals is given in \cite[Sect 9.1]{Hormander2}. For a proof based on the heat kernel method see
\cite{Matsuzawa}. H\"ormander noticed that a similar result holds for quasianalytic functionals of Roumieu type. More precisely, he showed the following important result:
\begin{proposition}\emph{(\cite[Cor.\ 3.5]{Hormander})}\label{support}
Let $M_p$ be a weight sequence satisfying $(M.1)$, $(M.2)'$, $(QA)$ and $(NE)$. For every $f \in \mathcal{E}'^{\{M_p\}}(\Omega)$ there is a smallest compact set among the $\{M_p\}$-carriers of $f$ and that set coincides with $\operatorname{supp}_{\mathcal{A}'}f$. We simply denote this set by $\operatorname{supp} f$.
\end{proposition}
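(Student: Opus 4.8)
The plan is to reduce the statement to a single assertion, namely that the analytic support $K_0 := \operatorname{supp}_{\mathcal{A}'} f$ is itself a $\{M_p\}$-carrier of $f$, and then to establish that assertion by means of a Gauss--Weierstrass (heat kernel) representation of $f$. Once we know both that $K_0$ is a $\{M_p\}$-carrier and that every $\{M_p\}$-carrier $K$ satisfies $K_0 \subseteq K$, it follows at once that $K_0$ is the smallest $\{M_p\}$-carrier and that it coincides with $\operatorname{supp}_{\mathcal{A}'} f$, which is precisely the claim.

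The inclusion $K_0 \subseteq K$ for every $\{M_p\}$-carrier $K$ is the easy half. By $(NE)$ we have $p! \subset M_p$, so that $\mathcal{A}(U) \hookrightarrow \mathcal{E}^{\{M_p\}}(U)$ continuously for every open $U$; passing to germs, this yields a continuous inclusion $\mathcal{A}[K] \hookrightarrow \mathcal{E}^{\{M_p\}}[K]$. Dualizing, the restriction of any $f \in \mathcal{E}'^{\{M_p\}}[K]$ to $\mathcal{A}[K]$ is a genuine analytic functional carried by $K$, and by density of the entire functions this restriction agrees with the image of $f$ under the embedding $\mathcal{E}'^{\{M_p\}}(\Omega) \subseteq \mathcal{A}'(\Omega)$. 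Hence $K$ is an analytic carrier of $f$, and minimality of the analytic support gives $K_0 \subseteq K$.

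For the hard half I would represent $f$ through its heat extension $u(x,t) = \langle f_y, E(x-y,t)\rangle$, $t>0$, where $E$ denotes the Gauss--Weierstrass kernel; this is licit since $E(x-\cdot,t)$ is entire and hence a valid element of $\mathcal{E}^{\{M_p\}}[K_0]$. The function $u$ solves the heat equation and encodes two independent pieces of information about $f$. On the one hand, because $f$ is analytically carried by $K_0$, the known estimates for the heat transform of analytic functionals force $u$ to decay, for $x$ outside any fixed neighbourhood of $K_0$, essentially like $\exp(-d(x,K_0)^2/(4t))$ as $t\to 0^+$; this spatial decay localizes $f$ to $K_0$. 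On the other hand, because $f$ is continuous on $\mathcal{E}^{\{M_p\}}$, estimating the $\{M_p\}$-seminorms of $E(x-\cdot,t)$ by means of $(M.1)$ and $(M.2)'$ produces a temporal growth bound of the form $|u(x,t)| \le C\exp(M(L/t))$, uniform for $x$ in a neighbourhood of $K_0$; such a bound is precisely the signature of the class $\{M_p\}$. I would then recover $f$ from $u$ as the weak limit $\lim_{t\to0^+} u(\cdot,t)$ and check that the induced pairing $\varphi \mapsto \lim_{t\to0^+}\int u(x,t)\varphi(x)\,dx$ extends continuously from entire $\varphi$ to all germs $\varphi \in \mathcal{E}^{\{M_p\}}[K_0]$, the verification being carried out through the projective description of the seminorms in Lemma \ref{systemnorms}. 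This continuity is exactly the statement that $K_0$ is a $\{M_p\}$-carrier.

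The crux, and the step I expect to be genuinely hard, is to merge the spatial localization of $u$ to $K_0$ with its temporal $\{M_p\}$-growth into a single continuity estimate on germs near $K_0$. The difficulty is aggravated by quasianalyticity: since there are no non-trivial compactly supported functions of class $\{M_p\}$, one cannot localize by multiplying with a cutoff, so the restriction of the pairing to a neighbourhood of $K_0$ must be forced entirely by the decay of $u$, uniformly as $t\to0^+$. Concretely one must balance the factor $\exp(M(L/t))$ against the loss of derivatives recorded by $M_\alpha\prod_{j\le|\alpha|}r_j$ in the $\mathcal{R}$-seminorms, which is where the convexity $(M.1)$ and the stability $(M.2)'$ of $M_p$ --- equivalently the submultiplicative behaviour of the associated function $M$ --- enter decisively. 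A conceivable alternative is a Martineau-type route: prove directly that the family of $\{M_p\}$-carriers is stable under finite intersections and then pass to $K_0$ by a compactness argument. This, however, only relocates the difficulty to the vanishing of a first cohomology group (equivalently, the solvability of a Cousin problem) for the sheaf of quasianalytic functions, which is no easier than the estimate above.
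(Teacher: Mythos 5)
Your reduction is the correct skeleton: split the claim into (a) every $\{M_p\}$-carrier of $f$ contains $\operatorname{supp}_{\mathcal{A}'}f$, and (b) $\operatorname{supp}_{\mathcal{A}'}f$ is itself an $\{M_p\}$-carrier. Part (a) as you argue it is fine: $(NE)$ gives the continuous inclusion of germ spaces, and density of entire functions makes the restriction of $f$ to $\mathcal{A}[K]$ consistent with the canonical image of $f$ in $\mathcal{A}'(\Omega)$. Note, however, that the paper itself does not prove this proposition at all --- it imports it verbatim as \cite[Cor.\ 3.5]{Hormander}; and H\"ormander's own proof of the hard part (his Thm.\ 3.4, whose machinery the present paper reuses in Lemma \ref{cutfourier}, Lemma \ref{lemma2} and STEP III of the proof of Proposition \ref{embedding}) is \emph{not} a heat-kernel argument but a Fourier-analytic one: an annular decomposition $\varphi \mapsto \sum_n (\kappa_{p_n}\varphi)\ast\mathcal{F}^{-1}(\psi_n)$ built from analytic cut-off sequences, which yields directly the continuity of $f$ on $\mathcal{E}^{\{M_p\}}[\operatorname{supp}_{\mathcal{A}'}f]$.

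The genuine gap is that you never prove part (b), which \emph{is} the theorem. You record the two heat-kernel estimates (Gaussian spatial decay of $u(x,t)$ off $K_0$, temporal growth near $K_0$) and then defer their merging as ``the step I expect to be genuinely hard''; everything before that point is either the easy half or bookkeeping. Worse, the merge is quantitatively delicate in a way your sketch does not engage with. By $(NE)$ one has $M(t)\leq \log C + ht$, so the temporal bound $e^{M(L/t)}$ is of the order $e^{cL/t}$ --- exactly the same scale as the spatial decay $e^{-d(x,K_0)^2/(4t)}$ (for $M_p=p!$ they coincide). Hence the decay beats the growth only at distance $\gtrsim\sqrt{L}$ from $K_0$, and to handle germs on \emph{arbitrarily small} neighbourhoods of $K_0$ you must exploit the Roumieu quantifier structure (the constant $L$ can be taken arbitrarily small at the price of $C=C_L$) uniformly in the reconstruction limit. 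Moreover, since $(QA)$ forbids $\{M_p\}$-class cutoffs, pairing $u(\cdot,t)$ with a germ $\varphi$ defined only near $K_0$ forces a $C^\infty$ cutoff $\chi$, and then $\varphi\chi\notin\mathcal{E}^{\{M_p\}}(\R^d)$: the existence of $\lim_{t\to0^+}\int u(x,t)\varphi(x)\chi(x)\,\mathrm{d}x$, its independence of $\chi$, and its continuity in the seminorms of Lemma \ref{systemnorms} are all new claims, not consequences of ``recovering $f$ from its heat extension''; Matsuzawa's argument \cite{Matsuzawa} for analytic functionals does not transfer verbatim, precisely because of this cutoff obstruction and the borderline $e^{c/t}$ balance. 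So as it stands you have a correct reduction and a plausible programme, but not a proof; filling the hole would require an estimate of essentially the same depth as H\"ormander's cut-off/Fourier decomposition, which is the route the cited source actually takes.
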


Finally, we introduce vector-valued quasianalytic functions and state a sufficient condition on the target space for the solvability of the Cousin problem. Let $F$ be a locally convex space. We write $\mathcal{E}^{\{M_p\}}(\Omega; F)$ for the space of all $\bm{\varphi} \in C^\infty(\Omega ; F)$ such that for each continuous seminorm $q$ on $F$, $K \Subset \Omega$, and $r_j \in \mathcal{R}$ it holds that
$$
q_{K,r_j}(\bm{\varphi}) := \sup_{\alpha \in \N_0^d}\sup_{x \in K} \frac{q(\bm{\varphi}^{(\alpha)}(x))}{M_{\alpha}\prod_{j = 0}^{|\alpha|}r_j} < \infty. 
$$
We endow it with the locally convex topology generated by the system of seminorms $\{q_{K,r_j} \, : \, q \mbox{ continuous seminorm on $F$}, K \Subset \Omega, r_j \in \mathcal{R} \}$. 

A Fr\'echet space $E$ with a generating system of seminorms $\{ \| \, \|_k \, : \, k \in \N\}$ has the property $(\underline{DN})$ \cite[p.\ 368]{Meise} if 
$$
(\exists m \in \N)(\forall k \in \N) (\exists j \in \N)(\exists \tau \in (0,1))( \exists C >0)
$$
$$
\|y\|_k \leq C \|y\|^{1-\tau}_m \|y\|^{\tau}_j, \qquad y \in E.
$$
We use the notation  $F'_\beta$ for $F'$ endowed with the strong topology. 
\begin{proposition}\emph{(\cite[Thm.\ 6.7]{DV})}
\label{Cousin}
Let $M_p$ be a weight sequence satisfying $(M.1)$, $(M.2)'$, $(QA)$, and $(NE)$. Let $\Omega \subseteq \R^d$ be open, let $\mathcal{M} = \{ \Omega_i \, : \, i \in I\}$ be an open covering of $\Omega$, and let $F$ be a $(DFS)$-space such that $F'_\beta$ has the property $(\underline{DN})$. Suppose $\bm{\varphi}_{i,j} \in \mathcal{E}^{\{M_p\}}(\Omega_i \cap \Omega_j;F)$, $i,j \in I$, are given $F$-valued functions such that
$$
\bm{\varphi}_{i,j} + \bm{\varphi}_{j,k} + \bm{\varphi}_{k,i} = 0 \qquad \mbox{on }  \Omega_i \cap \Omega_j \cap \Omega_k, 
$$
for all $i,j,k \in I$.Then, there are $\bm{\varphi}_{i} \in \mathcal{E}^{\{M_p\}}(\Omega_i;F)$, $i \in I$, such that
$$
\bm{\varphi}_{i,j} = \bm{\varphi}_{i} - \bm{\varphi}_{j} \qquad \mbox{on }  \Omega_i \cap \Omega_j,
$$
for all $i,j \in I$.
\end{proposition}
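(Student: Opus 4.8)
Solving the stated Cousin problem is equivalent to showing that the \v{C}ech coboundary
$$
\delta \colon \prod_{i \in I} \mathcal{E}^{\{M_p\}}(\Omega_i;F) \to Z^1(\mathcal{M};F), \qquad (\bm{\varphi}_i)_{i} \longmapsto (\bm{\varphi}_i - \bm{\varphi}_j)_{i,j},
$$
is surjective onto the space $Z^1(\mathcal{M};F)$ of all families $(\bm{\varphi}_{i,j})_{i,j}$ obeying the displayed compatibility relation. The essential difficulty is that, by $(QA)$, there are no non-trivial compactly supported functions of class $\{M_p\}$; the sheaf $U \mapsto \mathcal{E}^{\{M_p\}}(U;F)$ therefore admits no partitions of unity and is not soft, so the classical patching argument is unavailable and must be replaced by a functional-analytic surjectivity argument. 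The plan is to treat first the scalar case $F = \C$ and then to transfer the conclusion to arbitrary $F$ by an $\varepsilon$-product technique, the hypothesis $(\underline{DN})$ on $F'_\beta$ being exactly what makes this transfer possible.

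First I would reduce, using that $\R^d$ is Lindel\"of, to a countable locally finite refinement of $\mathcal{M}$; since the cocycle and coboundary conditions pass to refinements, this entails no loss of generality. Then, invoking Lemma \ref{systemnorms}, I would describe every space $\mathcal{E}^{\{M_p\}}(U;F)$ occurring in the \v{C}ech complex through the explicit seminorms $q_{K,r_j}$, thereby exhibiting the scalar spaces $\mathcal{E}^{\{M_p\}}(U)$ as nuclear $(PLS)$-spaces (projective limits of $(DFS)$-spaces). Nuclearity and completeness of $F$ then allow the identification $\mathcal{E}^{\{M_p\}}(U;F) \cong \mathcal{E}^{\{M_p\}}(U)\,\varepsilon\,F$, which reduces the $F$-valued \v{C}ech complex to the scalar one followed by the $\varepsilon$-product with $F$.

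For the scalar case I would show that $\delta$ has closed range and is onto, i.e.\ that
$$
0 \to \mathcal{E}^{\{M_p\}}(\Omega) \to \prod_{i} \mathcal{E}^{\{M_p\}}(\Omega_i) \xrightarrow{\ \delta\ } Z^1(\mathcal{M};\C) \to 0
$$
is a topologically exact sequence of nuclear $(PLS)$-spaces; exactness on the left is the sheaf gluing axiom, while surjectivity is the scalar Cousin solvability, for which the quasianalytic estimates of Lemma \ref{systemnorms} are used to build local solutions with controlled growth and to verify that $\delta$ is a topological homomorphism. Finally, applying $\cdot\,\varepsilon\,F$ to this short exact sequence, I would invoke the splitting theory of Vogt (\cite{Meise}) together with the homological description of surjectivity for $(PLS)$-spaces: the sequence stays exact after $\varepsilon$-multiplication by $F$ provided its kernel terms satisfy a condition of type $(\Omega)$ and $F'_\beta$ satisfies $(\underline{DN})$. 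This yields the surjectivity of $\delta$ in the $F$-valued setting and hence the desired functions $\bm{\varphi}_i$.

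The step I expect to be the main obstacle is the scalar one: establishing \emph{topological} exactness, not merely algebraic solvability, of the \v{C}ech coboundary, and isolating the precise structural ($(\Omega)$-type) property of the kernel spaces that pairs with $(\underline{DN})$ on $F'_\beta$ to force the $\varepsilon$-product to remain exact. This is exactly where the lack of cut-off functions compels one to abandon soft-sheaf techniques in favour of the splitting machinery, and where the full force of $(M.1)$, $(M.2)'$, $(QA)$, and $(NE)$ enters through the seminorm description and the nuclearity and $(\Omega)$-type properties of $\mathcal{E}^{\{M_p\}}(U)$.
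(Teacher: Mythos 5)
First, a point of order: the paper does not prove this proposition at all --- it is imported verbatim from \cite[Thm.\ 6.7]{DV} (indeed, the introduction states that \cite{DV} was written precisely to supply this result), so your attempt must be measured against the argument given there. Your general frame is aimed in the right direction and overlaps with \cite{DV} in several respects: under $(QA)$ there are no $\{M_p\}$-cutoffs, so soft-sheaf patching is unavailable and a functional-analytic argument is forced; the reduction to countable coverings is harmless; the identification $\mathcal{E}^{\{M_p\}}(U;F)\cong\mathcal{E}^{\{M_p\}}(U)\,\varepsilon\,F$ for complete $F$ is genuinely part of the machinery; and the hypothesis $(\underline{DN})$ on $F'_\beta$ does enter through splitting-type theorems --- consistent with how the present paper applies the proposition, namely with $F=s^{\{M_p\}}$, whose strong dual has $(\underline{DN})$ exactly under $(M.2)^*$ by Langenbruch's result quoted before Theorem \ref{sheaf}.

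The genuine gap is that the step you yourself flag as ``the main obstacle'' is not a technical verification but the entire theorem, and the mechanism you propose for it cannot work as stated. You plan to obtain scalar surjectivity of the coboundary $\delta$ by using the seminorm estimates of Lemma \ref{systemnorms} ``to build local solutions with controlled growth'' --- but in the quasianalytic setting there are no local solutions to build: there are no cutoffs to localize with, and no $\bar\partial$- or complexification technique available for $\mathcal{E}^{\{M_p\}}$ when $M_p\neq p!$, so there is literally nothing to patch. The actual proof runs in the \emph{dual} direction: H\"ormander's support theorem (Proposition \ref{support}) yields the decomposition of quasianalytic functionals $\mathcal{E}'^{\{M_p\}}[K_1\cup K_2]=\mathcal{E}'^{\{M_p\}}[K_1]+\mathcal{E}'^{\{M_p\}}[K_2]$ via the closed-range duality argument that this paper reproduces as condition $(FS2)$ in the proof of Proposition \ref{constructioninfra}, and one then passes from compact sets to open sets by a Mittag--Leffler-type procedure over exhaustions (equivalently, a derived projective limit, $\operatorname{Proj}^1$-vanishing, argument), using the density of entire functions in $\mathcal{E}^{\{M_p\}}$ guaranteed by $(M.1)$, $(M.2)'$, $(NE)$. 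Your sketch nowhere engages this duality with $\mathcal{E}'^{\{M_p\}}$, which is the engine of \cite{DV}. A secondary inaccuracy: the splitting input you cite (the classical Fr\'echet $(DN)$--$(\Omega)$ theory from \cite{Meise}) is too coarse for the Roumieu/(PLS) setting; what is needed are the vanishing criteria for $\operatorname{Proj}^1$ of spectra of (LB)/(DFS)-spaces in the style of Vogt and Wengenroth, applied after tensoring with $F$, and verifying their hypotheses for the $\mathcal{E}^{\{M_p\}}$-spectra is where the weight conditions do real work --- naming an ``$(\Omega)$-type condition on the kernel'' is a placeholder, not an argument.
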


Naturally, rephrased in the language of cohomology groups \cite{Morimoto} Proposition \ref{Cousin} says that the first cohomology group of the open covering $\mathcal{M}$  with coefficients in the sheaf of $F$-valued quasianalytic functions $\mathcal{E}^{\{M_p\}}(\: \cdot \:;F)$ vanishes, that is, $H^{1}(\mathcal{M},\mathcal{E}^{\{M_p\}}(\ \cdot \ ;F))=0$.
  
\section{Sheaves of infrahyperfunctions}\label{sect-infrahyperfunctions}
In  \cite{Hormander} H\"ormander constructed a flabby sheaf $\mathcal{B}^{\{M_p\}}$ with the property that its set of sections with support in $K$, $K \Subset \R^d$, coincides with the space of quasianalytic functionals of class $\{M_p\}$ supported in $K$. In this section, we give a short proof of H\"ormander's result and discuss some of the basic properties of the sheaf $\mathcal{B}^{\{M_p\}}$.

Let $X$ be a topological space and let $\mathcal{F}$ be a sheaf on $X$. For $U \subseteq X$ open  and $A \subseteq U$ we write $\Gamma_A(U, \mathcal{F})$ for the set of sections over $U$ with support in $A$. Define
$$
\Gamma_c(U,\mathcal{F}) = \bigcup_{K \Subset U} \Gamma_K(U, \mathcal{F}).
$$

\begin{proposition}\emph{(\cite[Sect.\ 6]{Hormander})}\label{constructioninfra}
Let $M_p$ be a weight sequence satisfying $(M.1)$, $(M.2)'$, $(QA)$, and $(NE)$. Then, there exists an (up to sheaf isomorphism) unique flabby sheaf $\mathcal{B}^{\{M_p\}}$  over $\R^d$ such that  
$$
\Gamma_K(\R^d, \mathcal{B}^{\{M_p\}}) = \mathcal{E}'^{\{M_p\}}[K], \qquad  K \Subset \R^d. 
$$
Moreover, for every relatively compact open subset $\Omega$ of $\R^d$, one has  
$$
\mathcal{B}^{\{M_p\}}(\Omega) = \mathcal{E}'^{\{M_p\}}[\overline{\Omega}] / \mathcal{E}'^{\{M_p\}}[\partial \Omega].
$$
\end{proposition}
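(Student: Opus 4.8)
The plan is to realize $\mathcal{B}^{\{M_p\}}$ as the flabby sheaf produced by the general construction of Junker and Ito \cite{Ito, Junker}, applied to the family of subspaces $\mathcal{E}'^{\{M_p\}}[K] \subseteq \mathcal{E}'^{\{M_p\}}(\R^d)$ indexed by the compact sets $K \Subset \R^d$. Their method associates to any monotone family of subspaces of a fixed space, indexed by compacts, a unique flabby sheaf $\mathcal{F}$ with $\Gamma_K(\R^d,\mathcal{F}) = \mathcal{E}'^{\{M_p\}}[K]$ and, for relatively compact $\Omega$, $\mathcal{F}(\Omega) = \mathcal{E}'^{\{M_p\}}[\overline{\Omega}]/\mathcal{E}'^{\{M_p\}}[\partial\Omega]$, provided the family satisfies two compatibility conditions: the intersection property $\mathcal{E}'^{\{M_p\}}[K_1] \cap \mathcal{E}'^{\{M_p\}}[K_2] = \mathcal{E}'^{\{M_p\}}[K_1 \cap K_2]$ and the additivity property $\mathcal{E}'^{\{M_p\}}[K_1 \cup K_2] = \mathcal{E}'^{\{M_p\}}[K_1] + \mathcal{E}'^{\{M_p\}}[K_2]$. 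Thus the entire statement reduces to checking these two conditions, once one records (from the isomorphism $\mathcal{E}'^{\{M_p\}}(\R^d) \cong \varinjlim_{K} \mathcal{E}'^{\{M_p\}}[K]$ noted above) that the $\mathcal{E}'^{\{M_p\}}[K]$ are indeed a monotone family of subspaces of $\mathcal{E}'^{\{M_p\}}(\R^d)$ whose union is the whole space.

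The intersection property is exactly where H\"ormander's support theorem enters, and it is immediate: if $f$ is carried by both $K_1$ and $K_2$, then Proposition \ref{support} gives $\operatorname{supp} f \subseteq K_1 \cap K_2$, so $f \in \mathcal{E}'^{\{M_p\}}[K_1 \cap K_2]$, the reverse inclusion being monotonicity.

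The additivity property is the heart of the matter and the step I expect to be hardest, precisely because quasianalyticity forbids cut-off functions, so a functional carried by $K_1 \cup K_2$ cannot be split by a naive partition of unity. I would obtain the splitting by duality from a Mayer--Vietoris statement for germs. Choosing open neighbourhoods $\Omega_i \supseteq K_i$, one verifies that the sequence $0 \to \mathcal{E}^{\{M_p\}}[K_1 \cup K_2] \to \mathcal{E}^{\{M_p\}}[K_1] \oplus \mathcal{E}^{\{M_p\}}[K_2] \to \mathcal{E}^{\{M_p\}}[K_1 \cap K_2] \to 0$, with the restriction and difference maps, is a short exact sequence of $(DFS)$-spaces: injectivity and exactness in the middle are the usual gluing arguments, while surjectivity of the difference map amounts to a Cousin problem for the two-set cover $\{\Omega_1,\Omega_2\}$ and is solved by Proposition \ref{Cousin} with $F = \C$. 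Dualizing this sequence of reflexive spaces --- the maps being topological homomorphisms with closed range, so that the dualization preserves exactness --- yields exactness of $0 \to \mathcal{E}'^{\{M_p\}}[K_1 \cap K_2] \to \mathcal{E}'^{\{M_p\}}[K_1] \oplus \mathcal{E}'^{\{M_p\}}[K_2] \to \mathcal{E}'^{\{M_p\}}[K_1 \cup K_2] \to 0$, and surjectivity of the last map is precisely the decomposition $f = f_1 + f_2$ with $\operatorname{supp} f_i \subseteq K_i$. (Alternatively, one may invoke H\"ormander's decomposition result directly.)

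With both conditions in hand, the Junker--Ito machine delivers the flabby sheaf $\mathcal{B}^{\{M_p\}}$ together with the identification $\Gamma_K(\R^d,\mathcal{B}^{\{M_p\}}) = \mathcal{E}'^{\{M_p\}}[K]$. The formula for relatively compact $\Omega$ then drops out: the restriction map $\mathcal{E}'^{\{M_p\}}[\overline{\Omega}] = \Gamma_{\overline{\Omega}}(\R^d,\mathcal{B}^{\{M_p\}}) \to \mathcal{B}^{\{M_p\}}(\Omega)$ has kernel the sections supported in $\overline{\Omega} \setminus \Omega = \partial\Omega$, that is $\mathcal{E}'^{\{M_p\}}[\partial\Omega]$, and it is surjective by flabbiness combined with additivity, which lets one represent any section near $\overline{\Omega}$ by a functional carried by $\overline{\Omega}$. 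Finally, uniqueness up to isomorphism is the rigidity built into the construction: a flabby sheaf on $\R^d$ is determined up to canonical isomorphism by the system of its spaces $\Gamma_K(\R^d,\cdot)$ of sections with support in compacts together with their inclusion maps, and these are prescribed here to be the $\mathcal{E}'^{\{M_p\}}[K]$.
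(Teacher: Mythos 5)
Your overall strategy is the same as the paper's: feed the family $F_K = \mathcal{E}'^{\{M_p\}}_\beta[K]$ into the Junker--Ito construction (Lemma \ref{Junkertheorem}) and use H\"ormander's support theorem (Proposition \ref{support}) for the intersection-type conditions. However, your reduction misstates what that construction requires, and this is where the one genuine gap lies. Lemma \ref{Junkertheorem} does not run on ``two compatibility conditions'' for a monotone family of subspaces: it needs the $F_K$ to carry Fr\'echet topologies (here: $\mathcal{E}'^{\{M_p\}}_\beta[K]$ is Fr\'echet as the strong dual of a $(DFS)$-space), it needs the countable decreasing intersection condition $(FS3)(ii)$ and $F_\emptyset = \{0\}$ (both again from Proposition \ref{support}), and --- most substantively --- it needs the density condition $(FS1)$: if $K_1 \subseteq K_2$ and every connected component of $K_2$ meets $K_1$, then $F_{K_1}$ must be dense in $F_{K_2}$. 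This condition is exactly where $(QA)$ enters the existence proof (by Denjoy--Carleman the restriction map $\mathcal{E}^{\{M_p\}}[K_2] \to \mathcal{E}^{\{M_p\}}[K_1]$ is then injective, and density of the range of its transpose follows by reflexivity); it is not automatic and indeed fails for non-quasianalytic classes. Your proposal never addresses it, so the appeal to the Junker--Ito machine is incomplete as written.

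On the additivity condition $(FS2)$ you genuinely diverge from the paper, correctly but more heavily than needed. You build the full Mayer--Vietoris short exact sequence, invoking Proposition \ref{Cousin} (with $F=\C$) for surjectivity of the difference map, and then dualize. The paper avoids the Cousin theorem here entirely: by Tr\`eves' surjectivity criterion for Fr\'echet spaces \cite[Thm.\ 37.2]{Treves}, surjectivity of the addition map $\mathcal{E}'^{\{M_p\}}[K_1] \times \mathcal{E}'^{\{M_p\}}[K_2] \to \mathcal{E}'^{\{M_p\}}[K_1 \cup K_2]$ follows once the restriction map $\mathcal{E}^{\{M_p\}}[K_1 \cup K_2] \to \mathcal{E}^{\{M_p\}}[K_1] \times \mathcal{E}^{\{M_p\}}[K_2]$ is injective with (weakly) closed range, and the closed range is precisely your ``middle exactness'' gluing argument, since the range coincides with the kernel of the difference map. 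So right-exactness of your sequence is never used, and dropping it keeps this proposition elementary, reserving the deep Cousin result for where it is indispensable (the sheaf property of $\mathcal{G}^{\{M_p\}}$ in Theorem \ref{sheaf}). Your route is valid --- Proposition \ref{Cousin} is available under the stated hypotheses, and the homomorphism properties you assert for the dualization are automatic for $(DFS)$-spaces by the open mapping theorem --- but those functional-analytic justifications are exactly what the paper's formulation via the surjectivity criterion sidesteps. Your treatment of uniqueness and of the quotient formula for relatively compact $\Omega$ agrees in substance with the paper (uniqueness is Lemma \ref{extension}, since flabby sheaves on paracompact spaces are soft; the quotient formula is part of the conclusion of Lemma \ref{Junkertheorem}).
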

We call $\mathcal{B}^{\{M_p\}}$  the \emph{sheaf of infrahyperfunctions of class $\{M_p\}$ (or Roumieu type).} For $M_p = p!$ this is exactly the sheaf of hyperfunctions $\mathcal{B}$. 
Our proof of Proposition \ref{constructioninfra} relies on the following general method for the construction of flabby sheaves with prescribed compactly supported sections, due to Junker and Ito \cite{Ito, Junker} (they used it to construct sheaves of vector-valued (Fourier) hyperfunctions). The idea goes back to Martineau's duality approach to hyperfunctions \cite{Martineau, Schapira}.  
\begin{lemma}\emph{(\cite[Thm. 1.2]{Ito})}\label{Junkertheorem}
Let $X$ be a second countable, locally compact topological space. Assume that for each compact $K \Subset X$ a Fr\'echet space $F_K$ is given and that for each two compacts $K_1, K_2 \Subset X$,  $K_1 \subseteq K_2$, there is an injective linear continuous mapping $\iota_{K_2,K_1}: F_{K_1} \rightarrow F_{K_2}$ such that for $K_1 \subseteq K_2 \subseteq K_3$, $\iota_{K_3,K_1} = \iota_{K_3,K_2} \circ \iota_{K_2,K_1}$ and $\iota_{K_1,K_1} = \operatorname{id}$. We shall identify $F_{K_1}$ with its image under  the mapping $\iota_{K_1,K_2}$. Suppose that the following conditions are satisfied:
\begin{enumerate}
\item[$(FS1)$] If $K_1, K_2 \Subset X$, $K_1 \subseteq K_2$, have the property that every connected component of $K_2$ meets $K_1$, then $F_{K_1}$ is dense in $F_{K_2}$.
\item[$(FS2)$] For $K_1, K_2 \Subset X$ the mapping 
$$ F_{K_1} \times F_{K_2} \rightarrow F_{K_1 \cup K_2}: (f_1,f_2) \rightarrow f_1 +  f_2$$
is surjective.
\item[$(FS3)$] $(i)$ For $K_1, K_2 \Subset X$ it holds that 
$$F_{K_1 \cap K_2} = F_{K_1} \cap F_{K_2}.$$
$(ii)$ Let $K_1 \supseteq K_2 \supseteq \ldots$ be a decreasing sequence  of compacts in $X$ and set $K = \cap_n K_n$. Then, 
$$ F_K =  \bigcap_{n \in \N}F_{K_n}.$$
\item[$(FS4)$] $F_\emptyset = \{0\}$.
\end{enumerate}
Then, there exists an (up to sheaf isomorphism) unique flabby sheaf $\mathcal{F}$ over $X$ such that
$$
\Gamma_K(X,\mathcal{F}) = F_K, \qquad K \Subset X.
$$
 Moreover, for every relatively compact open subset $U$ of $X$, one has
$$
\mathcal{F}(U) = F_{\overline{U}} / F_{\partial U}.
$$
\end{lemma}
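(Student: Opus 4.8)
The plan is to reconstruct $\mathcal{F}$ by \emph{Martineau's duality method}: regard $F_K$ as the space of ``sections supported in $K$'' and rebuild the sheaf from the quotient formula asserted in the statement. The first step is to extract an abstract support theory from $(FS3)$. Put $F := \varinjlim_{K \Subset X} F_K$ along the injections $\iota$, so that each $f \in F$ lies in $F_K$ for some compact $K$. The family $\mathcal{K}_f := \{ K \Subset X : f \in F_K\}$ is stable under finite intersections by $(FS3)(i)$; since $X$ is second countable, the intersection $S := \bigcap_{K \in \mathcal{K}_f} K$ is realized as $\bigcap_n K_n$ for a decreasing sequence in $\mathcal{K}_f$, whence $f \in F_S$ by $(FS3)(ii)$. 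Thus every $f$ has a smallest carrier $\operatorname{supp} f := S$, and $(FS4)$ gives $\operatorname{supp} f = \emptyset$ if and only if $f = 0$. This is the abstract analogue of Proposition \ref{support} and is exactly what makes the phrase ``carried by $K$'' meaningful. With it in hand I define, for a relatively compact open $U$, the candidate space $\mathcal{F}(U) := F_{\overline U}/F_{\partial U}$.

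Next I would construct the restriction maps and verify the presheaf axioms on the basis of relatively compact opens. For $\overline V \subseteq U$ I use $(FS2)$ on the cover $\overline U = \overline V \cup \overline{U \setminus \overline V}$ to write a representative $f \in F_{\overline U}$ as $f = f_V + f'$ with $f_V \in F_{\overline V}$ and $f' \in F_{\overline{U \setminus \overline V}}$, and set $\rho^U_V[f] := [f_V]$. Well-definedness (independence of the decomposition and of the representative modulo $F_{\partial U}$) follows from $(FS3)(i)$, because $F_{\overline V} \cap F_{\overline{U \setminus \overline V}} = F_{\overline V \cap \overline{U \setminus \overline V}} \subseteq F_{\partial V}$; functoriality is then routine. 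The separation axiom is pure support bookkeeping: if $[f]$ restricts to $0$ over every member $V_i$ of an open cover of $U$, then the local analysis above forces $\operatorname{supp} f \cap V_i = \emptyset$ for each $i$, hence $\operatorname{supp} f \subseteq \partial U$ and $[f] = 0$.

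The heart of the matter, and the step I expect to be the main obstacle, is the gluing axiom. For a two-set cover it is a Mayer--Vietoris argument whose one nontrivial (surjective) step is exactly $(FS2)$, and finite covers follow by iteration. The real difficulty is arbitrary, possibly infinite, open covers, and the related passage to non-relatively-compact $U$ via an exhaustion $U = \bigcup_n U_n$ with $\overline{U_n} \subseteq U_{n+1}$ and $\mathcal{F}(U) := \varprojlim_n \mathcal{F}(U_n)$. The glued section must be built as a limit of partial gluings, each step extending the previous one by one more member of the cover through an $(FS2)$-decomposition; since these corrections alter the section already constructed on the treated part, one must guarantee that the process stabilizes on every compact. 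This is precisely where the density hypothesis $(FS1)$ is indispensable: it provides the freedom to choose the successive corrections so that the construction converges (a Mittag--Leffler type argument, ensuring that the relevant ${\varprojlim}^{1}$ obstruction vanishes), thereby producing a genuine section rather than a merely formal one.

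Finally I would record flabbiness, the identification $\Gamma_K(X,\mathcal{F}) = F_K$, and uniqueness. For the constructed $\mathcal{F}$, flabbiness is immediate from the quotient description: given $t = [f] \in \mathcal{F}(U)$ with $f \in F_{\overline U} \subseteq F$, the element $f$ already defines a global section of $\mathcal{F}$ (a compatible family over any exhaustion) supported in $\overline U$ and restricting to $t$, so $\mathcal{F}(X) \to \mathcal{F}(U)$ is onto. The inclusion $F_K \hookrightarrow \Gamma_K(X,\mathcal{F})$ is clear, and it is onto because a global section supported in $K$ is compactly supported, hence an element of $F$ with abstract support in $K$; injectivity of $F_K \to \mathcal{F}(X)$ follows from $(FS3)$ and $(FS4)$ as above. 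For uniqueness, let $\mathcal{F}$ be \emph{any} flabby sheaf with the prescribed compactly supported sections. The disjoint-union trick now applies: for relatively compact $U$ and $t \in \mathcal{F}(U)$, the section equal to $t$ on $U$ and $0$ on $X \setminus \overline U$ is a well-defined section over the open set $X \setminus \partial U$, and flabbiness extends it to a global section supported in $\overline U$ restricting to $t$, with kernel $\Gamma_{\partial U}(X,\mathcal{F}) = F_{\partial U}$. Hence the Martineau formula $\mathcal{F}(U) = F_{\overline U}/F_{\partial U}$, together with the intrinsically determined restrictions, is forced on any such sheaf; since $X$ is second countable and locally compact, a sheaf is determined by its values on the basis of relatively compact opens, and uniqueness up to isomorphism follows.
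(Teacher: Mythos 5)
The paper does not prove this lemma at all: it is imported verbatim from Junker and Ito (\cite[Thm.\ 1.2]{Ito}), so your attempt can only be measured against that construction, whose blueprint (Martineau duality: define $\mathcal{F}(U)=F_{\overline U}/F_{\partial U}$ on relatively compact opens and extend by projective limits along an exhaustion) you have correctly identified. Several of your steps are sound and essentially the standard ones: the extraction of a smallest carrier from $(FS3)$ via the Lindel\"of property of a second countable space (the abstract analogue of Proposition \ref{support}), the restriction maps via $(FS2)$-decompositions with well-definedness from $F_{\overline V}\cap F_{\overline{U\setminus\overline V}}\subseteq F_{\partial V}$, the separation axiom by support bookkeeping, and the uniqueness argument (extend a section by zero across $X\setminus\partial U$, use flabbiness, identify the kernel with $\Gamma_{\partial U}(X,\mathcal{F})=F_{\partial U}$).

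There is, however, a genuine gap exactly where you yourself locate ``the heart of the matter,'' and your text does not close it. For the gluing of infinitely many local sections, and for everything concerning non-relatively-compact $U$, you only assert that ``a Mittag--Leffler type argument'' using $(FS1)$ makes the successive $(FS2)$-corrections stabilize; no such argument is actually constructed. This is not a routine verification: one must choose the exhaustion and the correction terms so that the telescoping series converges in the Fr\'echet spaces $F_K$, and $(FS1)$ is \emph{not} an unconditional density statement --- it applies only when every connected component of the larger compact meets the smaller one --- so its applicability at each step of the scheme has to be arranged and checked, which you never do. Relatedly, your flabbiness paragraph is wrong as stated: the argument ``$t=[f]$ with $f\in F_{\overline U}\subseteq F$ already defines a global section'' presumes $U$ relatively compact, whereas flabbiness requires surjectivity of $\mathcal{F}(X)\to\mathcal{F}(U)$ for \emph{every} open $U$; for general $U$ a section is a compatible family in $\varprojlim_n F_{\overline{U_n}}/F_{\partial U_n}$ that need not be represented by any single element of $F$, and producing a global extension is precisely the Mittag--Leffler step again. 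The same issue infects the identification $\Gamma_K(X,\mathcal{F})=F_K$, where a global section supported in $K$ must be shown to be represented by one element of $F_K$ consistently across the exhaustion (here a decomposition $F_{K\cup\partial U}=F_K+F_{\partial U}$ via $(FS2)$ and $(FS3)(i)$ does the work, but you gloss it). In short: your architecture is the right one and matches the cited source, but the convergence argument that makes the sheaf exist on all opens --- the only place where the Fr\'echet topology and $(FS1)$ enter, and the actual content of Junker--Ito's theorem --- is named rather than proved.
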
 
\begin{proof}[Proof of Proposition \ref{constructioninfra}]  
We use Lemma \ref{Junkertheorem}. Set $X = \R^d$, $F_K = \mathcal{E}'^{\{M_p\}}_\beta[K]$, and $\iota_{K_2, K_1} = {}^tr_{K_2,K_1}$ with $r_{K_2,K_1}$ the canonical restriction mapping 
$$
\mathcal{E}^{\{M_p\}}[K_2] \rightarrow \mathcal{E}^{\{M_p\}}[K_1]. 
$$
Condition $(FS1)$ is a consequence of $(QA)$ while $(FS3)$ and $(FS4)$ are satisfied because of Proposition 
\ref{support}. By the well known criterion for surjectivity of continuous linear mappings between Fr\'echet spaces \cite[Thm.\ 37.2]{Treves}, $(FS2)$ follows from the fact that the mapping
$$
 \mathcal{E}^{\{M_p\}}[K_1 \cup K_2] \rightarrow  \mathcal{E}^{\{M_p\}}[K_1] \times  \mathcal{E}^{\{M_p\}}[K_2]: \varphi \rightarrow (r_{K_1 \cup K_2, K_1}(\varphi), r_{K_1 \cup K_2, K_2}(\varphi) )
$$
is injective and has closed range.
\end{proof}
We shall often use the following extension principle.
\begin{lemma}\emph{(\cite[Lemma 2.3, p. 226]{Komatsu2})}\label{extension}
Let $X$ be a second countable topological space and let $\mathcal{F}$ and $\mathcal{G}$ be soft sheaves on $X$. Let $\rho_c: \Gamma_c(X, \mathcal{F}) \rightarrow \Gamma_c(X, \mathcal{G})$ be a linear mapping such that 
$$
\operatorname{supp} \rho_c (T)  \subseteq \operatorname{supp} T, \qquad T \in \Gamma_c(X, \mathcal{F}).
$$
Then, there is a unique sheaf morphism $\rho: \mathcal{F} \rightarrow \mathcal{G}$ such that, for every open set $U$ in $X$, we have $\rho_U(T) = \rho_c(T)$ for all $T  \in \Gamma_c(U, \mathcal{F})$. If, moreover, 
$$
\operatorname{supp} \rho_c (T) = \operatorname{supp} T, \qquad T \in \Gamma_c(X, \mathcal{F}),
$$
then $\rho$ is injective.
\end{lemma}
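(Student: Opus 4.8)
The plan is to build the morphism stalkwise, using the support condition to guarantee that $\rho_c$ descends to the stalks, and then to reconstruct $\rho$ on arbitrary sections by means of locally finite decompositions provided by the softness of $\mathcal{F}$. Throughout I rely on paracompactness of $X$ (which holds here since $X$ is second countable and locally compact), so that every open $U$ admits a locally finite family of compacts $K_i \Subset U$ whose interiors cover $U$.

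First I would establish the crucial \emph{locality} of $\rho_c$. If $T_1, T_2 \in \Gamma_c(X,\mathcal{F})$ have the same germ at a point $x$, then $x \notin \operatorname{supp}(T_1 - T_2)$, and the hypothesis together with linearity gives $x \notin \operatorname{supp}\rho_c(T_1 - T_2) = \operatorname{supp}(\rho_c(T_1) - \rho_c(T_2))$; hence $\rho_c(T_1)$ and $\rho_c(T_2)$ share the same germ at $x$. Since $\mathcal{F}$ is soft on a locally compact space, every germ in $\mathcal{F}_x$ is the germ of some compactly supported global section, so $\rho_c$ induces a well-defined linear stalk map $\widetilde\rho_x \colon \mathcal{F}_x \to \mathcal{G}_x$ for each $x \in X$.

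Next I would define $\rho_U$. Given $s \in \mathcal{F}(U)$, I use softness to write $s = \sum_i s_i$ with $s_i \in \Gamma_c(U,\mathcal{F})$, $\operatorname{supp} s_i \subseteq K_i$, and the family $\{\operatorname{supp} s_i\}$ locally finite. Such a decomposition is the standard soft-sheaf analogue of a partition of unity: one produces the $s_i$ by successively cutting off $s$ between nested compacts, which is possible precisely because $\mathcal{F}$ is soft. Since $\operatorname{supp}\rho_c(s_i) \subseteq \operatorname{supp} s_i \subseteq K_i$, the family of supports of the $\rho_c(s_i)$ is again locally finite, so by the gluing axiom the locally finite sum $\rho_U(s) := \sum_i \rho_c(s_i)$ defines a section of $\mathcal{G}$ over $U$.

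The heart of the argument, and the step I expect to be the main obstacle, is the independence of $\rho_U(s)$ of the chosen cover and partition. I would settle this by computing germs: near any $x$ only finitely many $s_i$ are nonzero and $\sum_i (s_i)_x = s_x$, so by the locality established above and linearity of $\widetilde\rho_x$,
$$
(\rho_U(s))_x = \sum_i (\rho_c(s_i))_x = \widetilde\rho_x\Big(\sum_i (s_i)_x\Big) = \widetilde\rho_x(s_x),
$$
which depends only on $s_x$. This simultaneously proves that $\rho_U$ is well defined, that the maps $\rho_U$ commute with restrictions (germs being intrinsic) and hence assemble into a sheaf morphism $\rho$, and---by choosing a partition refining a given compactly supported section---that $\rho_U(T) = \rho_c(T)$ for all $T \in \Gamma_c(U,\mathcal{F})$. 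Uniqueness follows since any morphism extending $\rho_c$ induces the same stalk maps $\widetilde\rho_x$, and a sheaf morphism is determined by its stalks. Finally, under the stronger hypothesis $\operatorname{supp}\rho_c(T) = \operatorname{supp} T$, each $\widetilde\rho_x$ is injective: if $\widetilde\rho_x(s_x) = 0$ and $T$ is a compactly supported representative of the germ $s_x$, then $x \notin \operatorname{supp}\rho_c(T) = \operatorname{supp} T$, so $s_x = T_x = 0$. A morphism with injective stalks is injective, which gives the last claim.
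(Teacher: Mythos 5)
The paper offers no proof of this lemma at all---it is imported verbatim from Komatsu \cite[Lemma 2.3, p.\ 226]{Komatsu2}---so your attempt can only be measured against the standard argument, and it essentially \emph{is} that argument, correctly executed. The three ingredients you use are the right ones: locality of $\rho_c$ (germ equality at $x$ follows from the support hypothesis plus linearity), stalk maps $\widetilde\rho_x$ defined via compactly supported representatives of germs, and globalization through a locally finite decomposition $s=\sum_i s_i$ furnished by softness. Your single germ computation $(\rho_U(s))_x=\widetilde\rho_x(s_x)$ does quadruple duty---well-definedness of $\rho_U$, compatibility with restrictions, the identity $\rho_U(T)=\rho_c(T)$ on $\Gamma_c(U,\mathcal{F})$, and uniqueness---and the final reduction of injectivity to injectivity on stalks is also correct.

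Two caveats are worth flagging. First, your proof uses local compactness of $X$ in two essential places (compactly supported representatives of germs, and covers of an open set by locally finite families of compacts), whereas the lemma as stated in the paper assumes only second countability. This is a defect of the statement rather than of your proof: Komatsu's original formulation and every application in the paper (where $X=\R^d$) have local compactness, and without it the key fact that every germ in $\mathcal{F}_x$ comes from $\Gamma_c(X,\mathcal{F})$ can fail, in which case even the uniqueness assertion would be in doubt; still, you should state explicitly that you are adding this hypothesis. Second, the decomposition of an arbitrary section into a locally finite sum of compactly supported sections subordinate to a cover is itself a nontrivial (though standard) theorem about soft sheaves on paracompact spaces; you invoke it rather than prove it, but your indication of the mechanism (successive cut-offs between nested compacts, using the extension property defining softness) is the correct one, so this is a citation-level gap rather than a logical one.
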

\begin{proposition}\label{propertiesinfra} Let $M_p$ be a weight sequence satisfying $(M.1)$, $(M.2)'$, $(QA)$ and $(NE)$.
\begin{enumerate} 
\item[$(i)$] Let $N_p$ be another weight sequence satisfying $(M.1)$, $(M.2)'$, $(QA)$ and $(NE)$. If $N_p \subset M_p$, then $\mathcal{B}^{\{M_p\}}$ is a subsheaf of
$\mathcal{B}^{\{N_p\}}$.  In particular, $\mathcal{B}^{\{M_p\}}$ is always a subsheaf of the sheaf of hyperfunctions $\mathcal{B}$. 
\item[$(ii)$] The sheaf of distributions $\mathcal{D}'$ is a subsheaf of $\mathcal{B}^{\{M_p\}}$ and the following diagram commutes
\[
\begin{tikzcd}
\mathcal{D}' \arrow{r} \arrow{d} & \mathcal{B}  \\
\mathcal{B}^{\{M_p\}} \arrow{ur} 
\end{tikzcd}
\]
\item[$(iii)$] Suppose that, in addition, $M_p$ satisfies $(M.2)$. Then, for every ultradifferential operator $P(D)$ of class $\{M_p\}$ there is a unique sheaf morphism $P(D): \mathcal{B}^{\{M_p\}} \rightarrow \mathcal{B}^{\{M_p\}}$ that coincides on $\Gamma_c(\R^d, \mathcal{B}^{\{M_p\}}) = \mathcal{E}'^{\{M_p\}}(\R^d)$ with the usual action of $P(D)$ on quasianalytic functionals of class $\{M_p\}$.
\end{enumerate}
\end{proposition}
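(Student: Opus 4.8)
The plan is to treat all three parts uniformly by means of the extension principle of Lemma \ref{extension}. Recall from the proof of Proposition \ref{constructioninfra} that flabby sheaves on the paracompact space $\R^d$ are soft, so $\mathcal{B}^{\{M_p\}}$, $\mathcal{B}^{\{N_p\}}$, $\mathcal{B}$ and $\mathcal{D}'$ are all soft sheaves to which Lemma \ref{extension} applies. In each case I would produce the desired sheaf morphism by first defining a linear map on compactly supported sections and then verifying the support condition of Lemma \ref{extension}; the whole difficulty is thus concentrated in controlling supports, for which Proposition \ref{support} (identifying the $\{M_p\}$-support with the analytic support $\operatorname{supp}_{\mathcal{A}'}$) is the decisive tool.

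For $(i)$, note that $N_p \subset M_p$ yields the continuous inclusion $\mathcal{E}^{\{N_p\}}(\Omega) \subseteq \mathcal{E}^{\{M_p\}}(\Omega)$ and hence, by duality, a linear map $\rho_c \colon \Gamma_c(\R^d,\mathcal{B}^{\{M_p\}}) = \mathcal{E}'^{\{M_p\}}(\R^d) \to \mathcal{E}'^{\{N_p\}}(\R^d) = \Gamma_c(\R^d,\mathcal{B}^{\{N_p\}})$ given by restriction of functionals. Since a functional and its restriction act identically on real analytic germs, they have the same analytic support; Proposition \ref{support} then gives $\operatorname{supp}\rho_c(f) = \operatorname{supp}_{\mathcal{A}'} f = \operatorname{supp} f$ for every $f$. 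Lemma \ref{extension} now furnishes a unique, and moreover injective, sheaf morphism $\mathcal{B}^{\{M_p\}} \to \mathcal{B}^{\{N_p\}}$, i.e. $\mathcal{B}^{\{M_p\}}$ is a subsheaf of $\mathcal{B}^{\{N_p\}}$. The final assertion of $(i)$ follows by taking $N_p = p!$, which is admissible because $(NE)$ means precisely $p! \subset M_p$ and $p!$ satisfies $(M.1)$, $(M.2)'$, $(QA)$ and $(NE)$.

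For $(ii)$, I would take $\rho_c \colon \Gamma_c(\R^d,\mathcal{D}') = \mathcal{E}'(\R^d) \to \mathcal{E}'^{\{M_p\}}(\R^d) = \Gamma_c(\R^d,\mathcal{B}^{\{M_p\}})$ to be the restriction of a compactly supported distribution to the smaller test space $\mathcal{E}^{\{M_p\}}$. The support condition here amounts to the classical fact that for $u \in \mathcal{E}'(\R^d)$ the distributional support coincides with $\operatorname{supp}_{\mathcal{A}'} u$; combined with Proposition \ref{support} this gives $\operatorname{supp}\rho_c(u) = \operatorname{supp} u$, so Lemma \ref{extension} yields an injective sheaf morphism $\mathcal{D}' \to \mathcal{B}^{\{M_p\}}$. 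Commutativity of the diagram is immediate on compactly supported sections, where all three arrows are plain restrictions of functionals from $C^\infty$ through $\mathcal{E}^{\{M_p\}}$ to $\mathcal{A}$; by the uniqueness clause of Lemma \ref{extension} the induced sheaf morphisms then commute as well.

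For $(iii)$, recall that an ultradifferential operator $P(D)$ of class $\{M_p\}$ acts continuously on $\mathcal{E}^{\{M_p\}}(\Omega)$ and hence, by duality, on $\mathcal{E}'^{\{M_p\}}(\R^d) = \Gamma_c(\R^d,\mathcal{B}^{\{M_p\}})$; I take this duality action as $\rho_c$. Since $P(D)$ commutes with restriction and maps $\mathcal{E}^{\{M_p\}}(\Omega)$ into itself, its transpose (again an ultradifferential operator of class $\{M_p\}$) maps the germ space $\mathcal{E}^{\{M_p\}}[K]$ into itself for every compact $K$, so the duality action maps $\mathcal{E}'^{\{M_p\}}[K]$ into itself. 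Consequently every $\{M_p\}$-carrier of $f$ is also a carrier of $P(D)f$, whence $\operatorname{supp} P(D)f \subseteq \operatorname{supp} f$. Lemma \ref{extension} then produces the unique sheaf morphism $P(D)\colon \mathcal{B}^{\{M_p\}} \to \mathcal{B}^{\{M_p\}}$ extending the duality action. I expect the main obstacle to be the support-preservation statements feeding into Lemma \ref{extension}, most notably the identification of the distributional support with the analytic support in $(ii)$; the remaining verifications are formal once Proposition \ref{support} is invoked.
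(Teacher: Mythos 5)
Your proposal is correct and follows essentially the same route as the paper: in all three parts it invokes Lemma \ref{extension}, with the support condition supplied by Proposition \ref{support} for $(i)$, by the coincidence of the distributional and analytic supports for $(ii)$, and by the inclusion $\operatorname{supp} P(D)f \subseteq \operatorname{supp} f$ for $(iii)$. One tiny slip: $\mathcal{D}'$ is soft because it is a fine sheaf (partitions of unity), not because it is flabby --- it is not flabby --- but this does not affect the argument.
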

\begin{proof}
In view of Lemma \ref{extension}, $(i)$ follows from Proposition \ref{support}, $(ii)$ from the fact that the distributional and hyperfunctional support of a distribution coincide, and $(iii)$ holds because 
$$
\operatorname{supp} P(D) f \subseteq \operatorname{supp} f, \qquad f \in \mathcal{E}'^{\{M_p\}}(\R^d).
$$
\end{proof}
\section{Algebras of generalized functions of class $\{M_p\}$}\label{section algebra}
We now introduce differential algebras $\mathcal{G}^{\{M_p\}}(\Omega)$ of generalized functions of class $\{M_p\}$ as quotients of algebras consisting of sequences of ultradifferentiable functions of class $\{M_p\}$ satisfying an appropriate growth condition. Furthermore, we provide a null characterization of the negligible sequences and discuss the sheaf-theoretic properties of the functor $\Omega \rightarrow \mathcal{G}^{\{M_p\}}(\Omega)$. Unless otherwise stated, throughout this section $M_p$ stands for a weight sequence satisfying $(M.1)$, $(M.2)$, $(QA)$, and $(NE)$.

We define the space of $\{M_p\}$-\emph{moderate} sequences as
\begin{align*}
\mathcal{E}^{\{M_p\}}_{\mathcal{M}}(\Omega) = \{ (f_n)_n \in {\mathcal{E}^{\{M_p\}}(\Omega)}^{\N} \, : \, &(\forall K \Subset \Omega) (\forall \lambda > 0) (\exists h > 0) \\
&\sup_{n \in \N} \|f_n\|_{K,h}e^{-M(\lambda n)} < \infty \},    
\end{align*}
and the space of $\{M_p\}$-\emph{negligible} sequences as
\begin{align*}
\mathcal{E}^{\{M_p\}}_{\mathcal{N}}(\Omega) = \{ (f_n)_n \in {\mathcal{E}^{\{M_p\}}(\Omega)}^{\N} \, : \, &(\forall K \Subset \Omega) (\exists \lambda > 0) (\exists h > 0) \\
&\sup_{n \in \N} \|f_n\|_{K,h}e^{M(\lambda n)} < \infty \}.    
\end{align*}

Notice that $\mathcal{E}^{\{M_p\}}_{\mathcal{M}}(\Omega)$ is an algebra under pointwise multiplication of sequences, as follows from $(M.1)$ and $(M.2)$, and that $\mathcal{E}^{\{M_p\}}_{\mathcal{N}}(\Omega)$ is an ideal of $\mathcal{E}^{\{M_p\}}_{\mathcal{M}}(\Omega)$. Hence, we can define the algebra $\mathcal{G}^{\{M_p\}}(\Omega)$ of \emph{generalized functions of class $\{M_p\}$} 
as the factor algebra
$$
\mathcal{G}^{\{M_p\}}(\Omega) = \mathcal{E}^{\{M_p\}}_{\mathcal{M}}(\Omega)/\mathcal{E}^{\{M_p\}}_{\mathcal{N}}(\Omega). 
$$
We denote by $[(f_n)_n]$ the equivalence class of $(f_n)_n \in \mathcal{E}^{\{M_p\}}_{\mathcal{M}}(\Omega)$.
Note that $\mathcal{E}^{\{M_p\}}(\Omega)$ can be regarded as a subalgebra of $\mathcal{G}^{\{M_p\}}(\Omega)$ via the constant embedding
\begin{equation}
\label{eqconstantembedding}
\sigma_\Omega(f): = [(f)_n], \qquad  f \in \mathcal{E}^{\{M_p\}}(\Omega).
\end{equation}
We also remark that $\mathcal{G}^{\{M_p\}}(\Omega)$ can be endowed with a canonical action of ultradifferential operators $P(D)$ of class $\{M_p\}$. In fact, since $P(D)$ acts continuously on $\mathcal{E}^{\{M_p\}}(\Omega)$, we have that  $\mathcal{E}^{\{M_p\}}_{\mathcal{M}}(\Omega)$ and $\mathcal{E}^{\{M_p\}}_{\mathcal{N}}(\Omega)$ are closed under $P(D)$ if we define its action as $P(D)((f_{n})_{n}):=(P(D)f_{n})_{n}$. Consequently, every ultradifferential operator $P(D)$ of class $\{M_p\}$ induces a well-defined linear operator 
$$
P(D):\mathcal{G}^{\{M_p\}}(\Omega)\to \mathcal{G}^{\{M_p\}}(\Omega).
$$

We now provide a null characterization of the ideal $\mathcal{E}^{\{M_p\}}_{\mathcal{N}}(\Omega)$. 

 \begin{lemma} \label{nullchar}
 Let $(f_n)_n \in \mathcal{E}^{\{M_p\}}_{\mathcal{M}}(\Omega)$. Then, $(f_n)_n \in \mathcal{E}^{\{M_p\}}_{\mathcal{N}}(\Omega)$ if and only if 
for every $K \Subset \Omega$ there is $\lambda > 0$ such that
$$
\sup_{n \in \N} \sup_{x \in K}|f_n(x)|e^{M(\lambda n)} < \infty.  
$$
 \end{lemma}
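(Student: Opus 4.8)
The implication from negligibility to the stated condition is immediate: if $\sup_n \|f_n\|_{K,h}e^{M(\lambda n)} < \infty$, then singling out the term $\alpha = 0$ in the definition of $\|\cdot\|_{K,h}$ (and recalling $M_0 = 1$) gives $\sup_{x\in K}|f_n(x)| \le \|f_n\|_{K,h}$, whence the $L^\infty$ estimate with the same $\lambda$. The whole content of the lemma is the converse, namely upgrading control of $f_n$ itself to control of all its derivatives, and this is precisely where moderateness enters.

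Fix $K \Subset \Omega$ and choose a regular compact $K'$ with $K \subseteq \operatorname{int} K'$ and $K' \Subset \Omega$. By hypothesis applied to $K'$ there are $\lambda_0, C_0 > 0$ with $\sup_{x \in K'}|f_n(x)| \le C_0 e^{-M(\lambda_0 n)}$ for all $n$, while moderateness on $K'$ provides, for every $\mu > 0$, constants $h_\mu, C_\mu$ with $\|f_n\|_{K',h_\mu} \le C_\mu e^{M(\mu n)}$; in particular $\max_{|\beta|=m}\sup_{K'}|f_n^{(\beta)}| \le C_\mu h_\mu^m M_m e^{M(\mu n)}$ for every $m$. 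The plan is to interpolate these two bounds. Writing $\|g\|_{L^\infty(L)} := \sup_{x \in L}|g(x)|$, I will use a Landau--Kolmogorov / Gagliardo--Nirenberg interpolation inequality of the form
\[
\|g^{(\alpha)}\|_{L^\infty(K)} \le C_1^{\,m}\Big(\max_{|\beta|=m}\|g^{(\beta)}\|_{L^\infty(K')}\Big)^{|\alpha|/m}\|g\|_{L^\infty(K')}^{1-|\alpha|/m} + C_1^{\,m}\|g\|_{L^\infty(K')},
\]
valid for all $g \in C^\infty(K')$ and $0 \le |\alpha| \le m$, with a constant $C_1 = C_1(K,K',d) \ge 1$ that grows at most geometrically in the top order $m$.

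Given $\alpha$ with $|\alpha| = k \ge 1$, I apply this with $g = f_n$ and top order $m = 2k$ and insert the two estimates above, obtaining
\[
\|f_n^{(\alpha)}\|_{L^\infty(K)} \le C_1^{2k}\big(C_\mu h_\mu^{2k}M_{2k}e^{M(\mu n)}\big)^{1/2}\big(C_0 e^{-M(\lambda_0 n)}\big)^{1/2} + C_1^{2k}C_0 e^{-M(\lambda_0 n)}.
\]
To extract a bound of the form $C\,h'^{k}M_k\,e^{-M(\lambda'' n)}$ I divide by $h'^{k}M_k$ and balance the non-exponential and exponential factors separately. For the former, $(M.2)$ with $p=q=k$ gives $M_{2k}^{1/2} \le A^{1/2}H^{k}M_k$, so the non-exponential part is dominated by $(C_1^{2}h_\mu H/h')^{k}$ times a constant; choosing $h' \ge C_1^{2}h_\mu H$ makes this uniformly bounded in $k$. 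For the latter, the factor is $\exp\big(\tfrac12 M(\mu n) - \tfrac12 M(\lambda_0 n)\big)$, and here I use the translation of $(M.2)$, namely $2M(t) \le M(Ht) + \log A$, in the rescaled form $2M(t/H) \le M(t) + \log A$. Taking $\mu = \lambda_0/H$ bounds $\tfrac12 M(\mu n)$ by $\tfrac14 M(\lambda_0 n) + \tfrac14\log A$, so the exponent is $\le -\tfrac14 M(\lambda_0 n) + \tfrac14\log A$; iterating the same inequality once more bounds $\tfrac14 M(\lambda_0 n)$ from below by $M(\lambda'' n) - c$ with $\lambda'' = \lambda_0/H^2$. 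Collecting everything (and treating $k=0$ trivially via the $L^\infty$ bound on $K'$) yields $\|f_n\|_{K,h'} \le C\,e^{-M(\lambda'' n)}$ uniformly in $n$, which is exactly negligibility on $K$; since $K \Subset \Omega$ was arbitrary, the proof is complete.

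The main obstacle is the interpolation inequality itself, specifically securing a constant that is at most geometric in the top order $m$ and valid in the multidimensional sup-norm setting; without geometric control in $m$ the division by $h'^{k}M_k$ could not be absorbed. I would establish a bound of this type by iterating a single-step Landau--Kolmogorov estimate $\|g'\|_{L^\infty} \lesssim \delta^{-1}\|g\|_{L^\infty} + \delta\|g''\|_{L^\infty}$ along a chain of nested compacts between $K$ and $K'$ (tensorizing over the coordinate directions in the $d$-dimensional case), which produces exactly geometric dependence on the number of differentiations. Everything else --- the two weight-sequence manipulations and the bookkeeping of constants --- is routine given $(M.1)$ and $(M.2)$.
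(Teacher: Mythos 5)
Your overall strategy (interpolate between the zeroth\--order decay and the moderateness bound at order $2k$, then balance using $(M.2)$ and the inequality $2M(t/H)\le M(t)+\log A$) is exactly the paper's strategy, and your weight\--sequence bookkeeping at the end is correct. The gap is the interpolation inequality you rely on: with an additive term $C_1^{\,m}\|g\|_{L^\infty(K')}$ that is only \emph{geometric} in $m$, that inequality is false, and passing to a smaller compact $K\Subset\operatorname{int}K'$ does not rescue it. Test it on Chebyshev polynomials: for odd $k$ take $d=1$, $K=[-1/2,1/2]$, $K'=[-1,1]$, $m=2k$, and $g=T_{2k-1}$, so that $\deg g=2k-1<m$, hence $\max_{|\beta|=m}\|g^{(\beta)}\|_{L^\infty(K')}=0$ while $\|g\|_{L^\infty(K')}=1$. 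From the explicit expansion of $T_n$, the coefficient of $x^{k}$ in $T_{2k-1}$ has modulus $\frac{2k-1}{3k-1}\binom{(3k-1)/2}{(k-1)/2}2^{k}\ge\frac{1}{2\sqrt{3}}\,(2\sqrt{3})^{k}$, so
\[
\|g^{(k)}\|_{L^\infty(K)}\;\ge\;|g^{(k)}(0)|\;\ge\;\frac{k!}{2\sqrt{3}}\,(2\sqrt{3})^{k},
\]
which grows factorially, whereas your inequality would force $\|g^{(k)}\|_{L^\infty(K)}\le C_1^{\,2k}$. (In dimension $d$, tensor with constants: $g(x)=T_{2k-1}(x_1)$ gives the same contradiction.) The same example shows why your proposed derivation of a geometric constant, by iterating $\|g'\|\lesssim\delta^{-1}\|g\|+\delta\|g''\|$ along a chain of nested compacts, cannot work: performing $m$ differentiations inside a collar of fixed width $\delta$ forces steps of width about $\delta/m$, producing a factor of order $m/\delta$ per step, i.e.\ constants of order $m^{k}\delta^{-k}$ at order $k$ --- super\--geometric, and of exactly the size the Chebyshev polynomials show to be unavoidable.

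The correct substitute is Gorny's inequality, which is what the paper proves (in a multivariable form, obtained from the one\--dimensional inequality via directional derivatives rather than tensorization): its additive term is $\bigl(\|g\|_{L^\infty}\,m!\,\delta^{-m}\bigr)^{k/m}\approx\|g\|_{L^\infty}\,(m/e)^{k}\delta^{-k}$, carrying precisely the factorial factor your version omits. With $m=2k$ this injects a factor of order $\sqrt{(2k)!}\,\delta^{-k}$ into the estimate, and this is where the hypothesis $(NE)$ becomes essential: $p!\le Ch^{p}M_p$ gives $\sqrt{(2k)!}\le\sqrt{C}\,h^{k}\sqrt{M_{2k}}\le\sqrt{CA}\,(hH)^{k}M_k$ by $(M.2)$, so the factorial is absorbed into $h'^{\,k}M_k$ after enlarging $h'$. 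It is a telling symptom that your argument never invokes $(NE)$, although it is a standing assumption in this section and is exactly what the paper uses at this point. Once Gorny's inequality replaces your interpolation inequality and $(NE)$ is applied as above, the remainder of your proof --- the choice $\mu=\lambda_0/H$ and the iteration of $2M(t/H)\le M(t)+\log A$ to produce the decay $e^{-M(\lambda_0 n/H^{2})}$ --- goes through and coincides with the paper's argument.
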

 \begin{proof}
The proof is based on the following multivariable version of Gorny's inequality: Let $K$ and $K'$ be regular compact subsets of $\R^d$ such that $K' \Subset K$. Set $d(K',K^c) = \delta > 0$. Then,
\begin{align}
\max_{|\alpha| = k} \| f^{(\alpha)}\|_{L^\infty(K')} \leq &4 e^{2k}\left(\frac{m}{k}\right)^k \| f \|^{1 - k/m}_{L^\infty(K)} \times \nonumber \\
&\left(\max \left\{d^m\max_{|\alpha| = m} \| f^{(\alpha)}\|_{L^\infty(K)},\frac{\| f \|_{L^\infty(K)}m!}{\delta^m}\right\}\right)^{k/m}
\label{Gornymulti}
\end{align}
for all $f \in C^m(K)$ and $0<k<m$. We prove (\ref{Gornymulti}) below, but let us assume it for the moment and show how the result follows from it.
 
Suppose $(f_n)_n$ satisfies the $0$-th order estimate. Let $K' \Subset \Omega$ be an arbitrary regular compact set. Choose a regular  compact $K$ such that $K' \Subset K \Subset \Omega$. For every $\lambda_1 > 0$ there are $h_1,C_1 > 0$ such that
$$
\| f_n^{(\alpha)}\|_{L^\infty(K)} \leq C_1 h_1^{|\alpha|}M_{\alpha}e^{M(\lambda_1n)}, \qquad \alpha \in \N_0^d, \: n \in \N,
$$
and there are $\lambda_2, C_2 > 0$ such that
$$
\| f_n\|_{L^\infty(K)}\leq C_2e^{-M(\lambda_2n)}, \qquad n \in \N. 
$$
Let $\beta \in \N_0^d$, $\beta \neq 0$. Applying (\ref{Gornymulti}) with $k = |\beta|$ and $m = 2|\beta|$, we obtain
\begin{align*}
&\|f^{(\beta)}_n\|_{L^\infty(K')} 
\\
&\qquad
 \leq4 (2e^{2})^{|\beta|} \| f_n \|^{1/2}_{L^\infty(K)} \left(\max\left\{d^{2|\beta|}\max_{|\alpha| = 2|\beta|} \| f_n^{(\alpha)}\|_{L^\infty(K)},\frac{\| f_n \|_{L^\infty(K)}(2|\beta|)!}{\delta^{2|\beta|}}\right\}\right)^{1/2}.
\end{align*}
Combining this with the above inequalities and taking $\lambda_1:=\lambda_2/H$, one finds by $(M.1)$, $(M.2)$, and $(NE)$ that there are $h, C>0$ such that
\[
\|f^{(\beta)}_n\|_{L^\infty(K')} \leq C h^{|\beta|} M_\beta e^{-M(\lambda_2n/H^2)},\qquad \beta \in \N_0^d,\: n \in \N.
\]
Thus $(f_n)_n \in \mathcal{E}^{\{M_p\}}_{\mathcal{N}}(\Omega)$.

We now show (\ref{Gornymulti}). The one-dimensional Gorny inequality \cite[p.\ 324]{Gorny} states that
\begin{align}
\| g^{(k)}\|_{L^\infty([a,b])} \leq &4 e^{2k}\left(\frac{m}{k}\right)^k \| g \|^{1 - k/m}_{L^\infty([a,b])} \times \nonumber \\
&\left(\max\left\{\| g^{(m)}\|_{L^\infty([a,b])},\frac{\| g \|_{L^\infty([a,b])}2^mm!}{(b-a)^m}\right\}\right)^{k/m},  
\label{Gorny}
\end{align}
for all $g \in C^m([a,b])$ and $0 < k < m$. Denote by $\partial \slash \partial \xi$ the directional derivative in the direction $\xi$, where $\xi \in \R^{d}$ is a unit vector. It is shown in \cite[Thm.\ 2.2]{Chen} that  
\begin{equation}
\max_{|\alpha| = k} \| f^{(\alpha)}\|_{L^\infty(K')} \leq  \sup_{|\xi| = 1} \left \| \frac{\partial^k f}{\partial^k \xi} \right \|_{L^\infty(K')}
\label{directional}
\end{equation}
for all $f \in C^k(K')$, $k \in \N$ -- it is for this inequality that we need the compact $K'$ to be regular. We write $l(x, \xi)$ for the line in $\R^d$ with direction $\xi$ passing through the point $x\in K'$. Define $g_{x, \xi}(t) = f(x + t \xi)$ for $t \in \{ t \in \R \, : \, x + t\xi \in K\}$. The latter set always contains a compact interval $I_{x, \xi} \ni 0$ of length at least $2\delta$. Inequality \eqref{Gorny} therefore implies that
\begin{align*}
\left \| \frac{\partial^k f}{\partial^k \xi} \right \|_{L^\infty(K')} &= \sup_{x \in K'}  | g^{(k)}_{x, \xi}(0)| \leq \sup_{x \in K'}  \| g^{(k)}_{x, \xi}\|_{L^\infty(I_{x, \xi})}
 \\
& \leq \sup_{x \in K'}  4 e^{2k}\left(\frac{m}{k}\right)^k \| g_{x, \xi} \|^{1 - k/m}_{L^\infty(I_{x, \xi})}
\times
\\
&
\quad
\left( \max\left\{\| g_{x, \xi}^{(m)}\|_{L^\infty(I_{x, \xi})},\frac{\| g_{x, \xi} \|_{L^\infty(I_{x, \xi})}m!}{\delta^m}\right\}\right)^{k/m}
 \\
& \leq 4 e^{2k}\left(\frac{m}{k}\right)^k \| f \|^{1 - k/m}_{L^\infty(K)}
\left(\max\left\{\left \| \frac{\partial^m f}{\partial^m \xi}\right\|_{L^\infty(K)},\frac{\| f \|_{L^\infty(K)}m!}{\delta^m}\right\}\right)^{k/m}.
\end{align*}
The result now follows from \eqref{directional} and the fact that 
\begin{align*}
\left \| \frac{\partial^m f}{\partial^m \xi} \right \|_{L^\infty(K)} &= \sup_{x \in K} \left | \sum_{j_1 = 1}^d \cdots \sum_{j_m = 1}^d  \frac{\partial^m f(x)}{\partial x_{j_1}\cdots \partial x_{j_m} } \xi_{j_1} \cdots \xi_{j_m}  \right | 
\leq d^m \sup_{|\alpha| = m} \| f^{(\alpha)}\|_{L^\infty(K)}.
\end{align*}
\end{proof} 
Next, we discuss the sheaf properties of $\mathcal{G}^{\{M_p\}}(\Omega)$. Given an open subset $\Omega'$ of  $\Omega$ and $f = [(f_n)_n] \in \mathcal{G}^{\{M_p\}}(\Omega)$, the restriction of $f$ to $\Omega'$ is defined as
$$
f_{|\Omega'} = [(f_{n|\Omega'})_n] \in \mathcal{G}^{\{M_p\}}(\Omega'). 
$$
Clearly, the assignment $\Omega \rightarrow \mathcal{G}^{\{M_p\}}(\Omega)$ is a presheaf  on $\R^d$. Our aim in the rest of this section is to show that it is in fact a sheaf.  The idea of the proof comes from the theory of hyperfunctions in one dimension: the fact that the hyperfunctions are a sheaf on $\R$ is a direct consequence of the Mittag-Leffler lemma (= solution of the Cousin problem in one dimension) \cite{Morimoto}. Similarly, the solvability of the Cousin problem for the spaces $\mathcal{E}^{\{M_p\}}_{\mathcal{N}}(\Omega)$ would imply that $\mathcal{G}^{\{M_p\}}$ is a sheaf on $\R^d$.  To show the latter, we identify the space $\mathcal{E}^{\{M_p\}}_{\mathcal{N}}(\Omega)$ with a space of vector-valued ultradifferentiable functions of class $\{M_p\}$ with values in an appropriate sequence space and then use Proposition \ref{Cousin}.  We need some preparation: For $\lambda > 0$ we define the following Banach space
$$
s^{M_p,\lambda} = \{ (c_n)_n \in \C^\N \, : \, \sigma_\lambda( (c_n)_n) :=  \sup_{n \in \N} |c_n| e^{M(\lambda n)} < \infty \}.
$$
Set
$$
s^{\{M_p\}} = \varinjlim_{\lambda \rightarrow 0^+}s^{M_p,\lambda},
$$
a $(DFS)$-space. Given a sequence $r_j  \in \mathcal{R}$ we denote by $M_{r_j}$ the associated function of the sequence $M_p\prod_{j= 0}^pr_j$. In \cite[Cor.\ 3.1]{D} we have shown that a sequence $(c_n)_n \in \C^\N$ belongs to $s^{\{M_p\}}$ if and only if 
$$
\sigma_{r_j}((c_n)_n) :=  \sup_{n \in \N} |c_n| e^{M_{r_j}(n)} < \infty
$$
for all  $r_j \in \mathcal{R}$. Moreover, the topology of $s^{\{M_p\}}$ is generated by the system of seminorms $\{\sigma_{r_j} \, : \,  r_j \in \mathcal{R}\}$.
The strong dual of $s^{\{M_p\}}$ is given by
$$
 s_\beta'^{\{M_p\}} = \varprojlim_{\lambda \rightarrow 0^+}s^{M_p,-\lambda},
$$
where
$$
s^{M_p,-\lambda} = \{ (c_n)_n \in \C^\N \, : \, \sigma'_\lambda( (c_n)_n) :=  \sup_{n \in \N} |c_n| e^{-M(\lambda n)} < \infty \}, \qquad \lambda > 0.
$$
\begin{proposition}\label{vectordes}
We have 
$$
\mathcal{E}^{\{M_p\}}_{\mathcal{M}}(\Omega) =  \mathcal{E}^{\{M_p\}}(\Omega;  s'^{\{M_p\}}) \qquad \mbox{and}\qquad\mathcal{E}^{\{M_p\}}_{\mathcal{N}}(\Omega) =  \mathcal{E}^{\{M_p\}}(\Omega;  s^{\{M_p\}}).
$$
\end{proposition}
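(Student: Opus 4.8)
The plan is to realise a sequence $(f_n)_n$ of functions as the single vector-valued map $\boldsymbol{\varphi}\colon \Omega \to \C^{\N}$, $\boldsymbol{\varphi}(x):=(f_n(x))_n$, and to show that the defining conditions of the moderate and negligible spaces translate into the defining seminorm conditions of the corresponding vector-valued spaces. First I would record the seminorm descriptions of the targets: since $s'^{\{M_p\}}=\varprojlim_{\lambda\to 0^+}s^{M_p,-\lambda}$ is a Fréchet space its topology is generated by $\{\sigma'_\lambda:\lambda>0\}$, while since $s^{\{M_p\}}=\varinjlim_{\lambda\to 0^+}s^{M_p,\lambda}$ is a $(DFS)$-space its topology is, by \cite[Cor.\ 3.1]{D}, generated by $\{\sigma_{r_j}:r_j\in\mathcal{R}\}$. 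Before matching seminorms I would check that, whenever $(f_n)_n$ is moderate (resp.\ negligible), $\boldsymbol{\varphi}$ is a well-defined element of $C^\infty(\Omega;s'^{\{M_p\}})$ (resp.\ $C^\infty(\Omega;s^{\{M_p\}})$) with $\boldsymbol{\varphi}^{(\alpha)}(x)=(f^{(\alpha)}_n(x))_n$. Pointwise membership in the target is immediate from the $0$-th order bounds, and convergence of the difference quotients in the respective topology follows from a second-order Taylor estimate together with the uniform bounds; these are routine and I would not dwell on them.

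For the moderate case, fix $K\Subset\Omega$ and $\lambda>0$ and set $a_\alpha:=\sup_{x\in K}\sup_n|f^{(\alpha)}_n(x)|e^{-M(\lambda n)}$. The defining condition of $\mathcal{E}^{\{M_p\}}_{\mathcal{M}}(\Omega)$ reads ``$\exists h>0:\ \sup_\alpha a_\alpha/(h^{|\alpha|}M_\alpha)<\infty$'', whereas the seminorm condition $q_{K,r'_j}(\boldsymbol{\varphi})<\infty$ for $q=\sigma'_\lambda$ reads ``$\forall r'_j\in\mathcal{R}:\ \sup_\alpha a_\alpha/(M_\alpha\prod_{j=0}^{|\alpha|}r'_j)<\infty$''. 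These two are equivalent by the scalar projective description of the Roumieu weight system underlying Lemma \ref{systemnorms}, and the outer quantifier ``$\forall K\,\forall\lambda$'' matches on both sides. Hence $\mathcal{E}^{\{M_p\}}_{\mathcal{M}}(\Omega)=\mathcal{E}^{\{M_p\}}(\Omega;s'^{\{M_p\}})$.

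The negligible case is the delicate one. For fixed $K$, put $a_{n,k}:=\sup_{|\alpha|=k}\sup_{x\in K}|f^{(\alpha)}_n(x)|/M_k$. The negligible condition is \emph{doubly existential}, ``$\exists\lambda\,\exists h:\ \sup_{n,k}a_{n,k}\,e^{M(\lambda n)}h^{-k}<\infty$'', while the vector-valued condition is \emph{doubly universal}, ``$\forall r_j\,\forall r'_j:\ \sup_{n,k}a_{n,k}\,e^{M_{r_j}(n)}(\prod_{j=0}^{k}r'_j)^{-1}<\infty$''. The easy inclusion (negligible $\Rightarrow$ vector-valued) is handled by splitting the product weight: using \cite[Cor.\ 3.1]{D} the sequence $(e^{-M(\lambda n)})_n$ lies in $s^{M_p,\lambda}\subseteq s^{\{M_p\}}$, giving $e^{-M(\lambda n)}\le C\,e^{-M_{r_j}(n)}$ for every $r_j$, while $h^{|\alpha|}\le C\prod_{j=0}^{|\alpha|}r'_j$ holds for every $r'_j\in\mathcal{R}$ because $r'_j\to\infty$.

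The converse inclusion is the \emph{main obstacle}: one must extract a single pair $(\lambda,h)$ working simultaneously from the family of per-index universal bounds, i.e.\ a two-variable projective description for a product weight system. I expect to settle it by a diagonal argument. Assuming no such $(\lambda,h)$ exists, for each $\nu\in\N$ the choice $(\lambda,h)=(1/\nu,\nu)$ yields an index $(n_\nu,k_\nu)$ with $a_{n_\nu,k_\nu}\ge \nu\,\nu^{k_\nu}e^{-M(n_\nu/\nu)}$; passing to a subsequence so that $k_\nu$ is constant or strictly increasing (and then $n_\nu\to\infty$), I would construct $r_j,r'_j\in\mathcal{R}$ growing slowly enough that the universal bound is violated along $(n_\nu,k_\nu)$, contradicting the vector-valued condition. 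Here the scalar $\mathcal{R}$-description behind Lemma \ref{systemnorms} controls the $k$-factor and \cite[Cor.\ 3.1]{D} controls the $n$-factor. Equivalently, the whole identification can be read off from the commutation $\mathcal{E}^{\{M_p\}}(\Omega;s^{\{M_p\}})=\varinjlim_{\lambda\to 0^+}\mathcal{E}^{\{M_p\}}(\Omega;s^{M_p,\lambda})$, which reduces the problem to the scalar-step spaces $s^{M_p,\lambda}$ (on which the description is the moderate-type argument above); justifying that one may pull the inductive limit outside is precisely the same regularity difficulty repackaged, and is where I would expect the real work to lie.
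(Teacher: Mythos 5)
Your proposal is correct and, in its set-up and in the moderate case, follows the same lines as the paper: the paper likewise identifies the vector-valued seminorms through \cite[Cor.\ 3.1]{D} and Lemma \ref{systemnorms}, and it disposes of the first equality by the one-index projective description \cite[Lemma 3.4]{Komatsu3}, exactly as you do. The divergence is at the crux you correctly isolate, namely recovering a single pair $(\lambda,h)$ from the doubly universal bounds in the negligible case. You propose to prove this two-index projective description directly by diagonalization; the paper instead isolates it as Lemma \ref{doubleLemma} and proves it by a short reduction: writing $e^{M_{s_j}(n)}=\sup_{q} n^q/(M_q\prod_{j=0}^{q}s_j)$ introduces a third, discrete index $q$, so that with $b_{p,q}=\sup_n a_{p,n}n^q/M_q$ the mixed condition (geometric weight in $p$, associated-function weight in $n$) becomes a pure product-weight condition on the double sequence $(b_{p,q})$, to which the known lemma of Pilipovi\'c \cite[Lemma 1]{Pil94} applies verbatim and yields $\sup_{p,q}b_{p,q}/h^{p+q}<\infty$, i.e.\ precisely $\sup_{p,n}a_{p,n}e^{M(n/h)}/h^{p}<\infty$. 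Your diagonal scheme is the standard way such lemmas are proved and can be completed, but note that it needs a case analysis your sketch leaves implicit: subsequences with $n_\nu$ bounded and $k_\nu\to\infty$ contradict $f_{n}\in\mathcal{E}^{\{M_p\}}(\Omega)$, subsequences with $k_\nu$ bounded and $n_\nu\to\infty$ contradict the one-index description of $s^{\{M_p\}}$ from \cite[Cor.\ 3.1]{D}, and in the main case ($k_\nu,n_\nu\to\infty$) the slowly growing sequence controlling the $n$-variable must be anchored at the maximizing indices $m(n_\nu/\nu)$ of the associated function, which forces a further split according to whether $n_\nu/\nu$ stays bounded. The paper's route buys exactly the avoidance of this bookkeeping, at the price of a citation; yours buys self-containedness. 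Your alternative of commuting the inductive limit with $\mathcal{E}^{\{M_p\}}(\Omega;\,\cdot\,)$ is, as you note yourself, the same regularity difficulty repackaged, and the paper does not take it.
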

The proof of Proposition \ref{vectordes} is based on the ensuing lemma.
\begin{lemma}\label{doubleLemma}
Let $(a_{p,n})_{p,n \in \N}$ be a double sequence of positive real numbers. Then,
$$
\sup_{p,n \in \N} \frac{a_{p,n}e^{M(\lambda n)}}{h^p} < \infty
$$
for some $h, \lambda > 0$ if and only if
\begin{equation}
\sup_{p,n \in \N} \frac{a_{p,n}e^{M_{s_j}(n)}}{\prod_{j = 0}^p r_j} < \infty
\label{doubleseq}
\end{equation}
for all $r_j, s_j \in \mathcal{R}$.
\end{lemma}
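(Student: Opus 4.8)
My plan is to strip away the associated functions and turn the statement into a purely multi‑sequence comparison. Directly from the definition of the associated function one has $e^{M(\lambda n)}=\sup_{q\in\N_0}\lambda^q n^q/M_q$ and $e^{M_{s_j}(n)}=\sup_{q\in\N_0} n^q/(M_q\prod_{j=0}^q s_j)$. Substituting these and interchanging suprema (harmless, as everything is non‑negative), both conditions become statements about the single double sequence
\[
w_{p,q}:=\sup_{n\in\N}\frac{a_{p,n}n^q}{M_q}\in[0,\infty].
\]
Indeed the first displayed condition reads $\sup_{p,q}w_{p,q}\lambda^q h^{-p}<\infty$ for some $h,\lambda>0$, while condition \eqref{doubleseq} for all $r_j,s_j\in\mathcal{R}$ becomes $\sup_{p,q}w_{p,q}(\prod_{j=0}^q s_j)^{-1}(\prod_{j=0}^p r_j)^{-1}<\infty$ for all $r_j,s_j\in\mathcal{R}$. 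If $w_{p,q}=\infty$ for some $(p,q)$, both conditions fail, so I may assume all $w_{p,q}$ finite. Writing $a=\lambda$, $b=h^{-1}$, it then suffices to prove the symmetric two‑parameter statement: for $w_{p,q}\geq 0$, one has $\sup_{p,q}w_{p,q}a^q b^p<\infty$ for some $a,b>0$ \emph{iff} $\sup_{p,q}w_{p,q}(\prod_{j=0}^q s_j)^{-1}(\prod_{j=0}^p r_j)^{-1}<\infty$ for all $r_j,s_j\in\mathcal{R}$.

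For the direct implication I fix $a,b>0$ with $\sup_{p,q}w_{p,q}a^q b^p<\infty$ and take any $r_j,s_j\in\mathcal{R}$. Since $r_0=s_0=1$, I rewrite $w_{p,q}(\prod_{j=0}^q s_j)^{-1}(\prod_{j=0}^p r_j)^{-1}=w_{p,q}a^q b^p\prod_{j=1}^q(a s_j)^{-1}\prod_{j=1}^p(b r_j)^{-1}$. Because $r_j,s_j\to\infty$, we have $a s_j\geq 1$ and $b r_j\geq 1$ for all large $j$, so the two trailing products are bounded uniformly in $p,q$; hence the right‑hand supremum is finite. This is the elementary half.

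The converse is the crux, and I argue by contraposition with a diagonal construction. Suppose no $a,b$ works, so $\sup_{p,q}w_{p,q}a^q b^p=\infty$ for every $a,b>0$. Taking $a=b=1/k$, for each $k\in\N$ there is a pair $(p_k,q_k)$ with $w_{p_k,q_k}\geq k^{p_k+q_k+1}$. As $w$ is finite on every finite index set whereas $w_{p_k,q_k}\geq k\to\infty$, necessarily $p_k+q_k\to\infty$; passing to a subsequence I may assume each of $(p_k)$ and $(q_k)$ is either strictly increasing or constant, and not both constant. In the generic case both are strictly increasing, and I build $\rho_j,\sigma_j\in\mathcal{R}$ by the block rule $\rho_0=1$, $\rho_j=k_i$ for $p_{i-1}<j\leq p_i$ (with $p_0=0$), and $\sigma_j=k_i$ for $q_{i-1}<j\leq q_i$. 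Then $\rho_j,\sigma_j$ increase to infinity, so they lie in $\mathcal{R}$, while $\prod_{j=0}^{p_i}\rho_j\leq k_i^{p_i}$ and $\prod_{j=0}^{q_i}\sigma_j\leq k_i^{q_i}$ since every block factor is at most $k_i$. Thus the $(p_i,q_i)$‑term of the right‑hand quantity is at least $k_i^{p_i+q_i+1}/(k_i^{p_i}k_i^{q_i})=k_i\to\infty$, so the right‑hand supremum is infinite. When one sequence is eventually constant, say $q_i\equiv q_\infty$, I take $\sigma$ to be an arbitrary element of $\mathcal{R}$ (so $\prod_{j=0}^{q_\infty}\sigma_j$ is a constant) and build $\rho$ as above, obtaining a $(p_i,q_\infty)$‑term bounded below by a positive multiple of $k_i^{q_\infty+1}\to\infty$; the case of constant $(p_i)$ is symmetric. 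This contradiction finishes the proof. I expect this last step to be the main obstacle: because the blow‑up in $p$ and in $q$ may be genuinely independent, one cannot reduce to a single‑parameter statement, and it is precisely the monotone‑subsequence reduction together with the block definition of $\rho,\sigma$ that makes the diagonalization work in both variables at once.
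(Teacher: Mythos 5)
Your proof is correct, and its first step coincides with the paper's: expanding $e^{M(\lambda n)}$ and $e^{M_{s_j}(n)}$ as suprema over $q$ and commuting suprema, both arguments reduce the lemma to the same statement about the double sequence $w_{p,q}=\sup_n a_{p,n}n^q/M_q$ (the paper's $b_{p,q}$). The difference is how that statement is settled. The paper simply cites \cite[Lemma 1]{Pil94}, which says precisely that boundedness of $b_{p,q}/(\prod_{j=0}^p r_j\prod_{j=0}^q s_j)$ for all $r_j,s_j\in\mathcal{R}$ forces boundedness of $b_{p,q}/h^{p+q}$ for some $h>0$; you instead prove this two-index statement from scratch. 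Arguing by contraposition, you pick pairs $(p_k,q_k)$ with $w_{p_k,q_k}\geq k^{p_k+q_k+1}$, pass to a subsequence on which each index sequence is strictly increasing or constant (possible because $p_k+q_k\to\infty$, which you justify correctly via finiteness of $w$ on finite index sets), and construct $\rho,\sigma\in\mathcal{R}$ blockwise so that $\prod_{j=0}^{p_i}\rho_j\leq k_i^{p_i}$ and $\prod_{j=0}^{q_i}\sigma_j\leq k_i^{q_i}$, making the corresponding terms blow up like $k_i$; the degenerate cases where one index stays constant are handled separately and correctly. I checked the remaining details as well — the remark that an infinite $w_{p,q}$ kills both conditions, the elementary direct implication using $r_0=s_0=1$ and $ar_j,bs_j\geq 1$ for large $j$, and the membership $\rho,\sigma\in\mathcal{R}$ — and they are sound. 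So your proposal is a correct, self-contained variant of the paper's proof: the paper's route is shorter by delegating the combinatorial core to Pilipovi\'{c}, while yours in effect reproves that lemma and makes the diagonalization mechanism explicit, at the cost of the case analysis over monotone subsequences.
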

\begin{proof}
The direct implication is clear. Conversely, suppose \eqref{doubleseq} holds for all $r_j, s_j \in \mathcal{R}$. Equivalently,
$$
\sup_{p,n,q \in \N} \frac{a_{p,n}n^q}{\prod_{j = 0}^p r_j\prod_{j = 0}^q s_j M_q} < \infty,
$$
for all $r_j, s_j \in \mathcal{R}$.  Define
$$
b_{p,q} = \frac{1}{M_q}\sup_{n \in \N} a_{p,n}n^q, \qquad p,q \in \N. 
$$
Then,
$$
\sup_{p,q \in \N} \frac{b_{p,q}}{\prod_{j = 0}^p r_j\prod_{j = 0}^q s_j} < \infty,
$$
for all $r_j, s_j \in \mathcal{R}$. Hence, \cite[Lemma 1]{Pil94} implies that there is $h > 0$ such that
$$
\infty > \sup_{p,q \in \N} \frac{b_{p,q}}{h^{p+q}} = \sup_{p,n,q \in \N} \frac{a_{p,n}n^q}{h^{p+q}M_q} = \sup_{p,n \in \N} \frac{a_{p,n}e^{M(n/h)}}{h^p}.
$$
\end{proof}
\begin{proof}[Proof of Proposition \ref{vectordes}] We only show the second equality. The first one can be shown by a similar argument (but by using \cite[Lemma 3.4]{Komatsu3} instead of Lemma \ref{doubleLemma}). Clearly, the space $\mathcal{E}^{\{M_p\}}(\Omega;  s^{\{M_p\}})$ consists of all $(f_n)_n \in {\mathcal{E}^{\{M_p\}}(\Omega)}^\N$ such that for all $K \Subset \Omega$ and $r_j, s_j \in \mathcal{R}$ it holds that
$$
\sup_{\alpha \in \N_0^d}\sup_{x \in K} \frac{\sigma_{s_j}((f_n^{(\alpha)}(x))_n)}{M_{\alpha}\prod_{j = 0}^{|\alpha|}r_j}  =  \sup_{\alpha \in \N_0^d}\sup_{n \in \N} \frac{\|f_n^{(\alpha)}\|_{L^\infty(K)}e^{M_{s_j}(n)}}{M_{\alpha}\prod_{j = 0}^{|\alpha|}r_j} < \infty.
$$
We find that $(f_n)_n \in \mathcal{E}_{\mathcal{M}}^{\{M_p\}}(\Omega)$ by applying Lemma \ref{doubleLemma} to
$$
a_{p,n} = \max_{|\alpha| \leq p} \frac{\|f_n^{(\alpha)}\|_{L^\infty(K)}}{M_\alpha}, \qquad p,n \in \N.
$$
The converse inclusion can be shown similarly.
\end{proof}
\begin{corollary} \label{projective} We have
 \begin{align}\label{projmod}
\mathcal{E}^{\{M_p\}}_{\mathcal{M}}(\Omega) = \{ (f_n)_n \in  {\mathcal{E}^{\{M_p\}}(\Omega)}^{\N} \, : \, &(\forall K \Subset \Omega) (\forall r_j \in \mathcal{R}) (\exists s_j \in \mathcal{R})  \nonumber \\
 &\sup_{n \in \N}\|f_n\|_{K, r_j}e^{-M_{s_j}(n)} < \infty\},
\end{align}
and
\begin{align}\label{projneg}
\mathcal{E}^{\{M_p\}}_{\mathcal{N}}(\Omega) = \{ (f_n)_n \in  {\mathcal{E}^{\{M_p\}}(\Omega)}^{\N} \, : \, &(\forall K \Subset \Omega) (\forall r_j \in \mathcal{R}) (\forall s_j \in \mathcal{R}) \nonumber \\
 &\sup_{n \in \N}\|f_n\|_{K, r_j}e^{M_{s_j}(n)} < \infty\}.
\end{align}
\end{corollary}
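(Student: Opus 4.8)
The plan is to deduce both identities from the description of the moderate and negligible spaces as vector-valued ultradifferentiable functions furnished by Proposition \ref{vectordes}, so that the only remaining work is to unwind the topologies of $s^{\{M_p\}}$ and $s'^{\{M_p\}}$ and to recognize the seminorms $\| \, \|_{K, r_j}$ of Lemma \ref{systemnorms}.

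First I would dispose of the negligible case \eqref{projneg}, which is immediate. By Proposition \ref{vectordes} we have $\mathcal{E}^{\{M_p\}}_{\mathcal{N}}(\Omega) = \mathcal{E}^{\{M_p\}}(\Omega; s^{\{M_p\}})$, and in the course of its proof it was shown that the latter space consists exactly of those $(f_n)_n$ for which
$$
\sup_{\alpha \in \N_0^d}\sup_{n \in \N} \frac{\|f_n^{(\alpha)}\|_{L^\infty(K)}e^{M_{s_j}(n)}}{M_{\alpha}\prod_{j = 0}^{|\alpha|}r_j} < \infty
$$
for all $K \Subset \Omega$ and all $r_j, s_j \in \mathcal{R}$. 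Interchanging the suprema over $\alpha$ and $n$, factoring out $e^{M_{s_j}(n)}$, and recalling the definition of $\| \, \|_{K, r_j}$ from Lemma \ref{systemnorms}, this is precisely the condition in \eqref{projneg}; no further analysis is needed here.

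The substance lies in the moderate case \eqref{projmod}. Again by Proposition \ref{vectordes}, $\mathcal{E}^{\{M_p\}}_{\mathcal{M}}(\Omega) = \mathcal{E}^{\{M_p\}}(\Omega; s'^{\{M_p\}})$. Unwinding the definition of the vector-valued space with the defining seminorms $\sigma'_\lambda$ ($\lambda > 0$) of the projective limit $s'^{\{M_p\}} = \varprojlim_{\lambda \to 0^+}s^{M_p, -\lambda}$, and once more interchanging the suprema over $\alpha$ and $n$, one finds that $(f_n)_n$ is moderate if and only if $\sup_{n}\|f_n\|_{K, r_j}\, e^{-M(\lambda n)} < \infty$ for all $K \Subset \Omega$, all $r_j \in \mathcal{R}$, and all $\lambda > 0$. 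To match \eqref{projmod} it then remains to prove, for each fixed $K$ and $r_j$ and applied with $a_n = \|f_n\|_{K, r_j}$, the purely sequential equivalence: for any sequence $(a_n)_n$ of nonnegative reals,
$$
\big(\forall \lambda > 0 : \sup_{n}a_n e^{-M(\lambda n)} < \infty\big) \iff \big(\exists s_j \in \mathcal{R} : \sup_{n}a_n e^{-M_{s_j}(n)} < \infty\big).
$$
This is exactly the projective ($M_{s_j}$-)description of the strong dual $s'^{\{M_p\}}$, dual to the description of $s^{\{M_p\}}$ recorded in \cite[Cor.\ 3.1]{D}.

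The \emph{main obstacle} is the implication $(\Rightarrow)$ in the displayed equivalence. The reverse implication is elementary: since every $s_j \in \mathcal{R}$ increases to infinity, for each $\lambda > 0$ one has $\prod_{j=0}^p s_j \geq c_\lambda \lambda^{-p}$ for all $p$ with some $c_\lambda > 0$, whence $M_{s_j}(t) \leq M(\lambda t) + \log(1/c_\lambda)$, and a bound $a_n \leq C e^{M_{s_j}(n)}$ propagates to a bound with $e^{M(\lambda n)}$ for every $\lambda$. The forward implication, by contrast, requires a genuine construction: from the countable family of bounds $a_n \leq C_{1/k}\, e^{M(n/k)}$, $k \in \N$, one must diagonalize to produce a single sequence $s_j \nearrow \infty$ with $s_0 = 1$ for which $e^{M_{s_j}(n)}$ still dominates $a_n$. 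This is the analogue for $s'^{\{M_p\}}$ of the argument behind \cite[Cor.\ 3.1]{D} and can be carried out by the method of \cite[Lemma 1]{Pil94}; it is the only non-routine ingredient of the proof.
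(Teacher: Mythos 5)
Your reduction is the same one the paper intends: Corollary \ref{projective} is stated without proof precisely because it is meant to follow from Proposition \ref{vectordes} once the topologies of $s^{\{M_p\}}$ and $s'^{\{M_p\}}$ are unwound, and your treatment of \eqref{projneg} and of the elementary implication in \eqref{projmod} is correct. The gap is at the point you yourself flag as the main obstacle: the implication
\begin{equation*}
\bigl(\forall \lambda > 0 : \sup_{n}a_n e^{-M(\lambda n)} < \infty\bigr) \Longrightarrow \bigl(\exists s_j \in \mathcal{R} : \sup_{n}a_n e^{-M_{s_j}(n)} < \infty\bigr)
\end{equation*}
is asserted, not proved. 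The references you appeal to do not contain it: \cite[Lemma 3.4]{Komatsu3} and \cite[Lemma 1]{Pil94} are equivalences with the \emph{opposite} quantifier pattern (\emph{some} geometric bound versus \emph{all} $\mathcal{R}$-bounds), and \cite[Cor.\ 3.1]{D} concerns $s^{\{M_p\}}$ itself, not its dual. A direct diagonalization can indeed be pushed through (one must interleave the countably many bounds $a_n \leq C_k e^{M(n/k)}$ along suitable thresholds $N_k$, using that $M(n/k)-M(n/(2k)) \rightarrow \infty$ to absorb constants, and let $s_j$ reach the value $k$ only at sufficiently late indices $j$), but this is a genuine construction, and as written your proposal leaves exactly this step, the only non-trivial content of \eqref{projmod}, unproven.

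There is, however, a way to close the gap using only facts already on record in the paper, which is presumably why the statement is labelled a corollary. The hypothesis says exactly that $(a_n) \in \varprojlim_{\lambda \rightarrow 0^+} s^{M_p,-\lambda}$, and the paper identifies this Fr\'echet space with the strong dual $s_\beta'^{\{M_p\}}$ under the canonical pairing. By \cite[Cor.\ 3.1]{D}, the topology of $s^{\{M_p\}}$ is generated by the seminorms $\{\sigma_{s_j} : s_j \in \mathcal{R}\}$, and this family is directed: for $s_j, s'_j \in \mathcal{R}$ the sequence $\min(s_j,s'_j)$ again belongs to $\mathcal{R}$ and $\sigma_{\min(s_j,s'_j)} \geq \max(\sigma_{s_j},\sigma_{s'_j})$. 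Hence the continuous functional $(c_n) \mapsto \sum_n a_n c_n$ on $s^{\{M_p\}}$ is dominated by $C\sigma_{s_j}$ for a \emph{single} $s_j \in \mathcal{R}$, and evaluating at the unit vectors $e_n$ (for which $\sigma_{s_j}(e_n)=e^{M_{s_j}(n)}$) yields $|a_n| \leq Ce^{M_{s_j}(n)}$, which is the desired conclusion. Substituting this duality argument for your unproven step makes the proof complete, and with no new hard analysis, in keeping with the paper's presentation.
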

\begin{remark}\label{justification}
Since the structure (choice and order of quantifiers) of  the spaces occurring in \eqref{projmod} and \eqref{projneg} coincides with the structure of the widely accepted definition for spaces of moderate and negligible sequences based on an arbitrary locally convex space \cite{Delcroix}, Corollary \ref{projective} may serve to clarify our definition of $\mathcal{G}^{\{M_p\}}(\Omega)$.
\end{remark}
In \cite[Prop.\ 4.1]{Langenbruch} it is shown that for a weight sequence $M_p$ satisfying $(M.1)$ and $(M.2)'$, the space $s_\beta'^{\{M_p\}}$ satisfies $(\underline{DN})$ if and only if $(M.2)^*$ holds for $M_p$. We have set the ground to show that $\mathcal{G}^{\{M_p\}}$ is a sheaf.
\begin{theorem}\label{sheaf}
Let $M_p$ be a weight sequence satisfying $(M.1)$, $(M.2)$, $(M.2)^*$, $(QA)$, and $(NE)$. The functor $\Omega \rightarrow \mathcal{G}^{\{M_p\}}(\Omega)$ is a sheaf of differential algebras on $\mathbb{R}^{d}$. Furthermore, every ultradifferential operator of class $\{M_p\}$ $P(D): \mathcal{G}^{\{M_p\}} \rightarrow \mathcal{G}^{\{M_p\}}$ is a sheaf morphism.
\end{theorem}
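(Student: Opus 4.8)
The plan is to verify the two sheaf axioms for the quotient presheaf $\Omega \mapsto \mathcal{G}^{\{M_p\}}(\Omega)$ — the separation axiom being essentially formal and the gluing axiom being the genuine difficulty — and then to note that compatibility with products and with $P(D)$ is automatic. The backbone of the whole argument is Proposition \ref{vectordes}, which realizes both $\mathcal{E}^{\{M_p\}}_{\mathcal{M}}$ and $\mathcal{E}^{\{M_p\}}_{\mathcal{N}}$ as sheaves of vector-valued ultradifferentiable functions, together with the solvability of the Cousin problem, Proposition \ref{Cousin}.

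First I would record that for any locally convex space $F$ the functor $\Omega \mapsto \mathcal{E}^{\{M_p\}}(\Omega; F)$ is a sheaf: $F$-valued functions glue, and the growth condition defining the seminorms $q_{K,r_j}$ involves only compact sets, so if $K \Subset \Omega$ is covered by compacts $K_l \Subset \Omega_{i_l}$ then $q_{K,r_j} \leq \max_l q_{K_l,r_j}$, and the global estimates follow from the local ones with the \emph{same} $r_j$. By Proposition \ref{vectordes} this makes $\mathcal{E}^{\{M_p\}}_{\mathcal{M}}$ and $\mathcal{E}^{\{M_p\}}_{\mathcal{N}}$ sheaves, the latter a subsheaf of the former through the continuous inclusion $s^{\{M_p\}} \hookrightarrow s'^{\{M_p\}}$. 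The separation axiom for $\mathcal{G}^{\{M_p\}}$ is then the standard fact that a section of a sheaf which lies locally in a subsheaf lies globally in it: if $[(f_n)_n]$ restricts to $0$ on every member of an open cover $\{\Omega_i\}$ of $\Omega$, then $(f_n)_n \in \mathcal{E}^{\{M_p\}}_{\mathcal{M}}(\Omega)$ has all its restrictions in $\mathcal{E}^{\{M_p\}}_{\mathcal{N}}(\Omega_i)$; gluing these negligible restrictions in the sheaf $\mathcal{E}^{\{M_p\}}_{\mathcal{N}}$ and invoking separation in the sheaf $\mathcal{E}^{\{M_p\}}_{\mathcal{M}}$ forces $(f_n)_n \in \mathcal{E}^{\{M_p\}}_{\mathcal{N}}(\Omega)$, i.e. $[(f_n)_n] = 0$.

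The heart of the matter is the gluing axiom. Given an open cover $\{\Omega_i : i \in I\}$ of $\Omega$ and sections $f_i = [(f_{i,n})_n] \in \mathcal{G}^{\{M_p\}}(\Omega_i)$ with $f_i|_{\Omega_i \cap \Omega_j} = f_j|_{\Omega_i \cap \Omega_j}$, the compatibility means precisely that $\bm{\varphi}_{i,j} := (f_{i,n} - f_{j,n})_n$ lies in $\mathcal{E}^{\{M_p\}}_{\mathcal{N}}(\Omega_i \cap \Omega_j) = \mathcal{E}^{\{M_p\}}(\Omega_i \cap \Omega_j; s^{\{M_p\}})$, and these data obviously satisfy the cocycle relation $\bm{\varphi}_{i,j} + \bm{\varphi}_{j,k} + \bm{\varphi}_{k,i} = 0$. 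I then apply Proposition \ref{Cousin} with $F = s^{\{M_p\}}$: this is legitimate because $s^{\{M_p\}}$ is a $(DFS)$-space and, by \cite[Prop.\ 4.1]{Langenbruch}, the hypothesis $(M.2)^*$ guarantees that its strong dual $s_\beta'^{\{M_p\}}$ has property $(\underline{DN})$, so that $H^1(\{\Omega_i\}, \mathcal{E}^{\{M_p\}}(\,\cdot\,; s^{\{M_p\}})) = 0$. The Cousin problem thus has a solution $\bm{\varphi}_i = (g_{i,n})_n \in \mathcal{E}^{\{M_p\}}(\Omega_i; s^{\{M_p\}}) = \mathcal{E}^{\{M_p\}}_{\mathcal{N}}(\Omega_i)$ with $f_{i,n} - f_{j,n} = g_{i,n} - g_{j,n}$ on $\Omega_i \cap \Omega_j$ for every $n$. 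Hence the genuine functions $f_{i,n} - g_{i,n}$ agree on all overlaps and, for each fixed $n$, glue in the sheaf $\mathcal{E}^{\{M_p\}}(\,\cdot\,)$ to some $f_n \in \mathcal{E}^{\{M_p\}}(\Omega)$. A finite-subcover argument (as in the first paragraph) shows $(f_n)_n$ is moderate, since its restriction to each $\Omega_i$ is the moderate sequence $(f_{i,n} - g_{i,n})_n$; and because the $(g_{i,n})_n$ are negligible, $[(f_n)_n]$ restricts to $f_i$ on each $\Omega_i$.

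Finally, the resulting sheaf is one of differential algebras and each ultradifferential operator $P(D)$ is a sheaf morphism, because the pointwise product and the action $P(D)(f_n)_n = (P(D)f_n)_n$ manifestly commute with the restriction of functions; this requires no new input. I expect the only real obstacle to be the gluing step, and inside it the verification that Proposition \ref{Cousin} is applicable: the absence of non-trivial compactly supported quasianalytic functions (property $(QA)$) rules out the usual partition-of-unity gluing, so the entire weight of the proof rests on the cohomological vanishing above and hence on the structural condition $(M.2)^*$.
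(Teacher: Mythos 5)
Your proposal is correct and follows essentially the same route as the paper: the gluing axiom is reduced, via Proposition \ref{vectordes}, to the Cousin problem for $s^{\{M_p\}}$-valued quasianalytic functions, which Proposition \ref{Cousin} solves because $s^{\{M_p\}}$ is a $(DFS)$-space whose strong dual has $(\underline{DN})$ under $(M.2)^*$ by Langenbruch's result. The only difference is that you spell out the separation axiom and the locality of moderateness, which the paper treats as immediate.
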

\begin{proof}
It is clear that the presheaf $\mathcal{G}^{\{M_p\}}$ satisfies the first sheaf axiom $(S1)$. We now show that it also satisfies the patching condition $(S2)$. Let $\Omega \subseteq \R^d$ be open and let $\{\Omega_i \, : \, i \in I\}$ be an open covering of $\Omega$. Suppose $f_i = [(f_{i,n})_n] \in \mathcal{G}^{\{M_p\}}(\Omega_i)$ are given such that $f_i = f_j$ on $\Omega_i \cap \Omega_j$, for all $i,j \in I$. This means that there are $(g_{i,j,n})_n \in \mathcal{E}_\mathcal{N}^{\{M_p\}}(\Omega_i \cap \Omega_j)$ such that $g_{i,j,n} = f_{i,n} - f_{j,n}$ on $\Omega_i \cap \Omega_j$, for all $i,j \in I$, $n \in \N$. Since $s^{\{M_p\}}$ is a $(DFS)$-space and $s'^{\{M_p\}}_\beta$ has the property $(\underline{DN})$, Propositions \ref{Cousin} and \ref{vectordes} imply that there are $(g_{i,n})_n \in \mathcal{E}_\mathcal{N}^{\{M_p\}}(\Omega_i )$ such that $g_{i,j,n} = g_{i,n} - g_{j,n}$ on $\Omega_i \cap \Omega_j$, for all $i,j \in I$, $n \in \N$. Hence
$
f_n(x) = f_{i,n}(x) - g_{i,n}(x)$
 if $x \in \Omega_i,
$
is a well-defined function on $\Omega$, for each $n \in \N$, and $(f_n)_n \in \mathcal{E}_\mathcal{M}^{\{M_p\}}(\Omega)$. Set $f = [(f_{n})_n] \in \mathcal{G}^{\{M_p\}}(\Omega)$. Then,
$f_{|\Omega_i} = f_i$ for each $i \in I$.
Finally, it is clear from its definition that $P(D): \mathcal{G}^{\{M_p\}} \rightarrow \mathcal{G}^{\{M_p\}}$ is a sheaf morphism.
\end{proof}

In the next section we establish a very important property of $\mathcal{G}^{\{M_p\}}$, namely, that it is a soft sheaf.
\section{Embedding of the sheaf of infrahyperfunctions} \label{section-embedding}
In this section we shall embed $\mathcal{B}^{\{M_p\}}(\Omega)$ into $\mathcal{G}^{\{M_p\}}(\Omega)$ in such a way that the multiplication of ultradifferentiable functions of class $\{M_p\}$  is preserved. We proceed in various steps. We first construct a support preserving embedding of $\Gamma_c(\R^d, \mathcal{B}^{\{M_p\}}) = \mathcal{E}'^{\{M_p\}}(\R^d)$ into $\Gamma_c(\R^d, \mathcal{G}^{\{M_p\}})$ by means of convolution with a special analytic mollifier sequence that we construct below. Next, we use Lemma \ref{extension} together with the sheaf properties of $\mathcal{B}^{\{M_p\}}$ and $\mathcal{G}^{\{M_p\}}$ to extend this embedding to the whole space $\mathcal{B}^{\{M_p\}}$. This requires proving that $\mathcal{G}^{\{M_p\}}$ is soft. We are very indebted to H\"ormander for his ``hard analysis" type treatment of quasianalytic functionals. A lot of the techniques used in this section are modifications of the ideas from \cite{Hormander}.

We start with a discussion about the mollifier sequences that will be used to embed $\mathcal{E}'^{\{M_p\}}(\R^d)$ into $\Gamma_c(\R^d, \mathcal{G}^{\{M_p\}})$. A sequence 
$(\chi_n)_{n\in \N}$ in $\mathcal{D}(\Omega)$ is called an \emph{analytic cut-off sequence supported in $\Omega$} \cite{Hormander2} if 
\begin{itemize}
\item[$(a)$] $0 \leq \chi_n \leq 1$, $n \in \N$,
\item[$(b)$] $(\chi_n)_{n }$ is a bounded sequence in $\mathcal{D}(\Omega)$,
\item[$(c)$] There is $L \geq 1$ such that
$$ 
\|\chi_n^{(\alpha)}\|_{L^\infty(\R^d)} \leq L(L n)^{|\alpha|}, \qquad  n \in \N,\ |\alpha| \leq n.
$$
\end{itemize}
 We call $(\chi_n)_n$ an analytic cut-off sequence for $K\Subset\Omega$ if
\begin{itemize}
\item[$(d)$] there is an open neighborhood $V$ of $K$ such that $\chi_n \equiv 1$ on $V$ for all $n \in \N$.
\end{itemize}
It is shown in \cite[Thm.\ 1.4.2]{Hormander2} that for every $K \Subset \R^d$ and every open neighborhood $\Omega$ of $K$ there is an analytic cut-off sequence for $K$ supported in $\Omega$. The following two simple lemmas are very useful. We fix the constants in the Fourier transform as
$$
\mathcal{F}(\varphi)(\xi) = \widehat{\varphi}(\xi) = \int_{\R^d} \varphi(x) e^{-ix\xi}\dx.
$$

\begin{lemma}\emph{(Proof of \cite[Thm.\ 3.4]{Hormander})}\label{cutfourier}
Let $M_p$ be a weight sequence satisfying $(M.1)$ and $(NE)$. Let $\Omega$ be a relatively compact open subset in $\R^d$ and let $(\chi_
n)_n$ be an analytic cut-off sequence supported in $\Omega$. Let $h>0$. Then,
$$
|\xi|^n |\widehat{\chi_n\varphi}(\xi)| \leq CL|\Omega|\|\varphi\|_{\overline{\Omega},h} (\sqrt{d}(h+Lk))^{n}M_n, \qquad n \in \N, \: \xi \in \R^d,
$$
for all $\varphi \in \mathcal{E}^{M_p,h}(\overline{\Omega})$, where $C,k > 0$ are chosen in such a way that
$$
p^p \leq Ck^pM_p, \qquad p \in \N_0.
$$
\end{lemma}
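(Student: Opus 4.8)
The plan is to move the factor $|\xi|^n$ onto a derivative of $\chi_n\varphi$ and then estimate in $L^1$. Fix $\xi \neq 0$, set $\omega = \xi/|\xi|$ (a unit vector), and let $\partial_\omega = \sum_{j=1}^d \omega_j \partial_{x_j}$ be the associated directional derivative. Since $\partial_\omega e^{-ix\xi} = -i|\xi|e^{-ix\xi}$ and $\psi := \chi_n\varphi$ is compactly supported in $\Omega$, repeated integration by parts gives $(-i|\xi|)^n\widehat{\psi}(\xi) = (-1)^n \widehat{\partial_\omega^n\psi}(\xi)$, so that
$$
|\xi|^n|\widehat{\chi_n\varphi}(\xi)| = \bigl|\widehat{\partial_\omega^n(\chi_n\varphi)}(\xi)\bigr| \leq \|\partial_\omega^n(\chi_n\varphi)\|_{L^1} \leq |\Omega|\,\|\partial_\omega^n(\chi_n\varphi)\|_{L^\infty},
$$
the last step using that $\chi_n\varphi$ is supported in $\Omega$. (For $\xi = 0$ the left-hand side vanishes, as $n \geq 1$.) Hence it suffices to control the sup-norm of $\partial_\omega^n(\chi_n\varphi)$.

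First I would record the elementary device that converts directional derivatives into Cartesian ones: expanding $\partial_\omega^\ell$ by the multinomial theorem yields $|\partial_\omega^\ell f| \leq \bigl(\sum_j |\omega_j|\bigr)^\ell \max_{|\beta| = \ell}|f^{(\beta)}| \leq (\sqrt d)^\ell \max_{|\beta|=\ell}|f^{(\beta)}|$, where the factor $(\sqrt d)^\ell$ comes precisely from Cauchy--Schwarz, $\|\omega\|_1 \leq \sqrt d\,\|\omega\|_2 = \sqrt d$; this is the origin of the $\sqrt d$ in the statement. Applying the Leibniz rule to $\partial_\omega^n(\chi_n\varphi)$ and combining this bound with hypothesis $(c)$ for $\chi_n$ (admissible because every derivative that appears has order at most $n$) and the defining estimate $\|\varphi^{(\gamma)}\|_{L^\infty(\overline{\Omega})} \leq \|\varphi\|_{\overline{\Omega},h}\, h^{|\gamma|}M_{|\gamma|}$ for $\varphi \in \mathcal{E}^{M_p,h}(\overline{\Omega})$, I obtain
$$
\|\partial_\omega^n(\chi_n\varphi)\|_{L^\infty} \leq L\|\varphi\|_{\overline{\Omega},h}(\sqrt d)^n\sum_{\ell = 0}^n \binom{n}{\ell}(Ln)^\ell h^{n-\ell}M_{n-\ell}.
$$

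The heart of the matter---and the step I expect to be the main obstacle---is to absorb the harmful factor $n^\ell$ into $M_{n-\ell}$ so that the sum collapses to a binomial. Concretely I would establish the key inequality $n^\ell M_{n-\ell} \leq C k^\ell M_n$ for $0 \leq \ell \leq n$, with $C,k$ as in the statement; each summand is then dominated by $C\binom{n}{\ell}(Lk)^\ell h^{n-\ell}M_n$, and summation gives $C(h+Lk)^nM_n$. This is exactly where $(M.1)$ enters. Writing $M_n/M_{n-\ell} = m_{n-\ell+1}\cdots m_n$, the crucial observation is that by log-convexity the sequence $m_p$ is non-decreasing, so the geometric mean of its largest $\ell$ terms dominates the geometric mean of all $n$ terms:
$$
\left(\frac{M_n}{M_{n-\ell}}\right)^{1/\ell} = \bigl(m_{n-\ell+1}\cdots m_n\bigr)^{1/\ell} \geq \bigl(m_1\cdots m_n\bigr)^{1/n} = M_n^{1/n} \geq \frac{n}{k\,C^{1/n}},
$$
where the last inequality is the chosen normalization $p^p \leq C k^p M_p$ at $p = n$; raising to the power $\ell$ and using $C \geq 1$ yields the claim (the case $\ell = 0$ being trivial). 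Assembling the three displays produces
$$
|\xi|^n|\widehat{\chi_n\varphi}(\xi)| \leq CL|\Omega|\,\|\varphi\|_{\overline{\Omega},h}\bigl(\sqrt d\,(h+Lk)\bigr)^n M_n,
$$
where $(NE)$ serves only to guarantee the existence of $C,k$ with $p^p \leq C k^p M_p$ (indeed $p^p \leq e^p p! \leq C(eh)^pM_p$). The single point requiring care is the geometric-mean step, which in logarithmic form is the assertion that the average of the largest $\ell$ among $\log m_1 \leq \cdots \leq \log m_n$ is at least their overall average.
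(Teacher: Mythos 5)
Your proof is correct, and it is essentially the same argument as the one the paper points to: the paper gives no proof of this lemma but cites the proof of Theorem 3.4 in H\"ormander's \emph{Between distributions and hyperfunctions}, whose structure is exactly yours --- reduce $|\xi|^n|\widehat{\chi_n\varphi}(\xi)|$ to an $L^\infty$ bound on $n$-th order derivatives of $\chi_n\varphi$ (H\"ormander gets the $\sqrt d$ from $|\xi|^n\leq d^{n/2}\max_{|\alpha|=n}|\xi^\alpha|$ rather than from $\|\xi/|\xi|\|_1\leq\sqrt d$, a cosmetic difference), then Leibniz, property $(c)$, and the key weight-sequence inequality $n^{\ell}M_{n-\ell}\leq Ck^{\ell}M_n$ obtained from log-convexity plus the normalization $p^p\leq Ck^pM_p$. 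Your justification of that key inequality (geometric mean of the top $\ell$ of the $m_j$ dominates the overall geometric mean, and $C\geq 1$ forced by the case $p=0$) and the use of $(NE)$ only to produce $C,k$ are both sound, so the write-up fills in the cited proof correctly.
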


\begin{lemma}\emph{(\cite[Lemma 3.3]{Komatsu})}\label{Fourierchar}
Let $h > 0$ and let $\varphi \in L^1(\R^d)$. If
$$
\mu_h(\varphi) := \int_{\R^d} |\widehat{\varphi}(\xi)| e^{M(\xi/h)} \dxi < \infty,
$$
then $\varphi \in C^\infty(\R^d)$ and 
$$
\sup_{x \in \R^d}| \varphi^{(\alpha)}(x)| \leq \frac{\mu_h(\varphi) h^{|\alpha|}M_\alpha}{(2\pi)^d}, \qquad \alpha \in \N_0^d.
$$
\end{lemma}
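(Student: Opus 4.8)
The plan is to run Fourier inversion and read off everything from the defining inequality of the associated function. First I would observe that, since $M(t)=\sup_{p\in\N_0}\log(t^{p}/M_{p})\ge \log(t^{0}/M_{0})=0$, we have $e^{M(\xi/h)}\ge 1$ for all $\xi$; hence the hypothesis $\mu_h(\varphi)<\infty$ already forces $\widehat\varphi\in L^{1}(\R^d)$. With both $\varphi\in L^{1}$ and $\widehat\varphi\in L^{1}$, the classical $L^{1}$-inversion theorem shows that $\varphi$ coincides almost everywhere with the (automatically continuous) function
\[
\psi(x)=\frac{1}{(2\pi)^d}\int_{\R^d}\widehat\varphi(\xi)\,e^{ix\xi}\dxi ,
\]
and it is this representative $\psi$ for which I will establish smoothness and the stated bounds.

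The analytic core is a single elementary inequality. By the very definition of the associated function, $e^{M(t)}\ge t^{p}/M_{p}$, i.e. $t^{p}\le M_{p}\,e^{M(t)}$, for every $p\in\N_0$; applied with $t=|\xi|/h$ and $p=|\alpha|$ (recall $M_\alpha=M_{|\alpha|}$) and combined with $|\xi^{\alpha}|\le |\xi|^{|\alpha|}$ this yields
\[
|\xi^{\alpha}|\le h^{|\alpha|}M_\alpha\,e^{M(\xi/h)},\qquad \xi\in\R^d .
\]
Multiplying by $|\widehat\varphi(\xi)|$ and integrating then gives, for every multi-index $\alpha$,
\[
\int_{\R^d}|\xi^{\alpha}|\,|\widehat\varphi(\xi)|\dxi\le h^{|\alpha|}M_\alpha\int_{\R^d}|\widehat\varphi(\xi)|\,e^{M(\xi/h)}\dxi = h^{|\alpha|}M_\alpha\,\mu_h(\varphi)<\infty .
\]

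With these bounds the remainder is routine. Since $\xi\mapsto\xi^{\alpha}\widehat\varphi(\xi)$ is integrable with a majorant independent of $x$, I would differentiate the integral defining $\psi$ under the integral sign repeatedly (each step justified by dominated convergence) to conclude $\psi\in C^{\infty}(\R^d)$ with
\[
\psi^{(\alpha)}(x)=\frac{1}{(2\pi)^d}\int_{\R^d}(i\xi)^{\alpha}\widehat\varphi(\xi)\,e^{ix\xi}\dxi .
\]
Estimating the integrand by its modulus and inserting the displayed bound for $\int|\xi^{\alpha}|\,|\widehat\varphi|$ immediately produces $\sup_{x\in\R^d}|\psi^{(\alpha)}(x)|\le \mu_h(\varphi)\,h^{|\alpha|}M_\alpha/(2\pi)^d$, which is the assertion once $\varphi$ is identified with its smooth representative $\psi$. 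I expect no genuine obstacle: the only points requiring care are the measure-theoretic passage from the $L^{1}$-class of $\varphi$ to the continuous representative $\psi$ and the justification of differentiation under the integral, while the actual content — trading the polynomial growth of $\xi^{\alpha}$ against the exponential weight $e^{M(\xi/h)}$ — is precisely the one-line associated-function inequality above.
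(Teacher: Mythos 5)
Your proof is correct, and it is essentially the argument behind the cited result: the paper does not reprove this lemma but quotes it from Komatsu, whose proof is exactly this Fourier-inversion argument combined with the inequality $|\xi|^{|\alpha|}\le h^{|\alpha|}M_{\alpha}e^{M(\xi/h)}$ coming from the definition of the associated function (your preliminary observation that $e^{M(\xi/h)}\ge 1$, valid since $M_0=1$, and the identification of $\varphi$ with its continuous representative are the right points of care). Nothing is missing.
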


Let $(\chi_n)_{n}$ be an analytic cut-off sequence for $\overline{B}(0,1)$ consisting of even functions. Define
$$
\theta_n(x) = n^d \mathcal{F}^{-1}(\chi_n)(nx) = \mathcal{F}^{-1}(\chi_n(\cdot / n))(x), \qquad n \in \N.
$$
By Lemma \ref{Fourierchar}, $(\theta_n)_n\in\mathcal{E}^{\{M_p\}}_{\mathcal{M}}(\R^d)$. The sequence $(\theta_n)_{n}$ is called an \emph{$\{M_p\}$-mollifier sequence} if, in addition, the following property holds: for every $c >0$ there are $S,\delta, \gamma > 0$ such that
\begin{equation}
 \sup_{|x| \geq c} |\theta_n^{(\alpha)}(x)| \leq Se^{-M(\delta n)}\gamma
 ^{|\alpha|}M_\alpha, \qquad \alpha \in \N_0^d,\: n \in \N.
\label{mollifier}
\end{equation}
The next lemma establishes the existence of such mollifier sequences; in fact, we provide an explicit construction of a $\{p!\}$-mollifier sequence in its proof.
\begin{lemma}
For every weight sequence $M_p$ satisfying $(M.1)$ and $(NE)$ there exists an $\{M_p\}$-mollifier sequence.
\end{lemma}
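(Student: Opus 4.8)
The plan is to reduce to the analytic case $M_p=p!$ and to construct the mollifier explicitly there. Condition $(NE)$ gives $p!\subseteq M_p$, i.e. $p!\le Ch^{p}M_p$ for some $C,h>0$, and hence a sequence satisfying \eqref{mollifier} with $M_p=p!$ satisfies it for the given $M_p$ as well: the weight passes through $\gamma^{|\alpha|}|\alpha|!\le C(\gamma h)^{|\alpha|}M_\alpha$, and, since $(NE)$ forces $M(t)\le ht+\log C$, the decay passes through $e^{-\delta n}\le Ce^{-M(\delta n/h)}$. Thus it suffices to exhibit one even analytic cut-off sequence $(\chi_n)_n$ for $\overline B(0,1)$ for which $\theta_n=\mathcal F^{-1}(\chi_n(\cdot/n))$ obeys $\sup_{|x|\ge c}|\theta_n^{(\alpha)}(x)|\le Se^{-\delta n}\gamma^{|\alpha|}|\alpha|!$ for every $c>0$.

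For the construction I would fix an even $\rho\in\mathcal D(\R)$ with $\rho\ge0$, $\int\rho=1$, $\operatorname{supp}\rho\subseteq[-1,1]$, put $\rho_n(u)=2n\rho(2nu)$, and set $\eta_n=\mathbf 1_{[-5/2,5/2]}*\rho_n^{*n}$ and $\chi_n(\xi)=\prod_{j=1}^{d}\eta_n(\xi_j)$. Then $\chi_n$ is even, $\equiv1$ on a neighbourhood of $\overline B(0,1)$ and supported in $[-3,3]^d$, and distributing the derivatives onto the $n$ convolution factors (at most one each, possible since $|\alpha|\le n$) gives property $(c)$; thus $(\chi_n)_n$ is an even analytic cut-off sequence for $\overline B(0,1)$ and, as already noted before the lemma, $(\theta_n)_n\in\mathcal E^{\{M_p\}}_{\mathcal M}(\R^d)$ by Lemma \ref{Fourierchar}. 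The point of this particular choice is that its transform factorizes, $\widehat{\eta_n}(\zeta)=\tfrac{2\sin(5\zeta/2)}{\zeta}\,\widehat\rho(\zeta/(2n))^{n}$, so that $\theta_n(x)=\prod_{j}\omega_n(x_j)$ with $\omega_n(t)=\tfrac{\sin(5nt/2)}{\pi t}\,\widehat\rho(t/2)^{n}$.

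The decay is then driven by the factor $\widehat\rho(t/2)^n$. Since $\rho\ge0$ and $\int\rho=1$, one has $|\widehat\rho(s)|<1$ for $s\ne0$ and $\widehat\rho(s)\to0$, so $r_c:=\sup_{|s|\ge c/2}|\widehat\rho(s)|<1$ for each $c>0$ and $|\widehat\rho(t/2)^n|\le r_c^{\,n}$ on $|t|\ge c$; moreover $\|\widehat\rho^{(j)}\|_\infty\le1$ for every $j$. For $|t|\ge c$ and $k\le n$ I would expand $\omega_n^{(k)}$ by Leibniz: the derivatives of $\sin(5nt/2)/(\pi t)$ are $\lesssim (5n/2)^{l}$, and a Faà di Bruno estimate gives $\big|\tfrac{d^{m}}{dt^{m}}\widehat\rho(t/2)^{n}\big|\le r_c^{\,n}(Cn)^{m}$ (the number of ways of distributing the derivatives produces only the factor $(Cn)^m$, with no spurious $m!$). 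This yields $|\omega_n^{(k)}(t)|\le C_c\,r_c^{\,n}(C_3n)^{k}$, while for $k>n$ the global bound $|\omega_n^{(m)}(t)|\le Cn(3n)^{m}$, immediate from $\omega_n=n\,\mathcal F^{-1}(\eta_n)(n\,\cdot)$, is already enough. In either range the harmful power $n^{k}$ is absorbed by the target factorial via $n^{k}\le\gamma^{k}k!\,e^{n/\gamma}$, so that choosing $\delta<\log(1/r_c)$ and $\gamma$ large gives $|\omega_n^{(k)}(t)|\le Se^{-\delta n}\gamma^{k}k!$ on $|t|\ge c$. To reach $d$ variables, observe that $|x|\ge c$ forces $|x_{j_0}|\ge c/\sqrt d$ for some $j_0$; bounding that factor by the one-variable decay estimate, the remaining $\omega_n(x_j)$ by $Cn(3n)^{\alpha_j}$, and absorbing the surplus $n^{\,|\alpha|-\alpha_{j_0}}$ against half of $e^{-\delta n}$ through $e^{-\delta n}n^{\beta}\le e^{-\delta n/2}(2/\delta)^{\beta}\beta!$, together with $\alpha_{j_0}!\,(|\alpha|-\alpha_{j_0})!\le|\alpha|!$, produces exactly \eqref{mollifier} for $M_p=p!$.

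The hard part is obtaining the decay uniformly for every $c>0$. The estimates available from property $(c)$ of the cut-offs, or from Lemma \ref{cutfourier} with $\varphi\equiv1$, only control $|\xi|^n|\widehat{\chi_n}(\xi)|$ and hence bound $\theta_n(x)=\tfrac{n^{d}}{(2\pi)^d}\widehat{\chi_n}(-nx)$ by $(L/|x|)^{n}$; this decays only for $c$ larger than a fixed constant and says nothing for small $c$. The decay on every shell $|x|\ge c$ cannot be read off from size bounds on the derivatives of $\chi_n$ and genuinely requires the multiplicative structure of $\widehat{\chi_n}$ and the honest geometric smallness of $\widehat\rho(\cdot/2)^{n}$ off the origin. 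The second subtlety is that $\theta_n$ concentrates at scale $1/n$, so each derivative costs a factor $n$; one must therefore spend part of the exponential decay to convert $n^{|\alpha|}$ into the admissible weight $|\alpha|!$ (hence $M_\alpha$), rather than trying to cancel the exponential outright.
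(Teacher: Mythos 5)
Your reduction to $M_p=p!$ and your decay estimates for $\omega_n$ are sound, but there is a genuine gap at the point where you declare $(\chi_n)_n$ to be an analytic cut-off sequence: you verify $(a)$, $(c)$, $(d)$ and the support condition, but not $(b)$, and in fact $(b)$ \emph{fails} for your construction. Write $\eta_n=\mathbf{1}_{[-5/2,5/2]}\ast\rho_n^{\ast n}$. Then $\eta_n'(t)=\rho_n^{\ast n}(t+5/2)-\rho_n^{\ast n}(t-5/2)$, and since these two terms have disjoint supports, $\|\eta_n'\|_{L^\infty(\R)}=\|\rho_n^{\ast n}\|_{L^\infty(\R)}$. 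But $\rho_n^{\ast n}$ is a probability density supported in $[-1/2,1/2]$ with variance $\sigma^2/(4n)$, where $\sigma^2$ is the variance of $\rho$; by Chebyshev's inequality at least $3/4$ of its mass lies in an interval of length $2\sigma/\sqrt{n}$, so $\|\rho_n^{\ast n}\|_{L^\infty(\R)}\geq 3\sqrt{n}/(8\sigma)\to\infty$. Thus already the first derivatives of your cut-offs blow up like $\sqrt{n}$, and $(\chi_n)_n$ is not a bounded sequence in $\mathcal{D}(\R^d)$. Since an $\{M_p\}$-mollifier sequence is, by definition, built from an \emph{analytic cut-off sequence} (properties $(a)$--$(c)$, hence $(b)$ included) satisfying additionally \eqref{mollifier}, your sequence does not satisfy the statement being proved. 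The source of the problem is structural: you placed the $n$-fold convolution power on the shrinking smooth bump, which concentrates, and used the box function as the only other factor, so no factor of fixed scale is available to absorb derivatives of a fixed order uniformly in $n$. Note that $(c)$, which only bounds derivatives of order $\leq n$ by $L(Ln)^{|\alpha|}$, does not imply $(b)$.

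The gap is repairable without touching your analysis. Either insert one fixed factor, $\eta_n=\mathbf{1}_{[-5/2,5/2]}\ast\rho\ast\rho_n^{\ast n}$: then $\eta_n^{(k)}=\mathbf{1}_{[-5/2,5/2]}\ast\rho^{(k)}\ast\rho_n^{\ast n}$ gives $\|\eta_n^{(k)}\|_{L^\infty(\R)}\leq\|\rho^{(k)}\|_{L^1(\R)}$ for each fixed $k$, i.e.\ $(b)$, while the Fourier side merely gains the extra factor $\widehat{\rho}(\zeta)$, bounded by $1$, so all your estimates survive verbatim. Or arrange the factors as the paper does: there $\chi_n=\kappa_n\ast H_n$, with $(\kappa_n)_n$ a pre-existing analytic cut-off sequence (hence bounded in $\mathcal{D}$) and $H_n$ the $n$-fold convolution of scaled boxes, a probability measure; then $\chi_n^{(k)}=\kappa_n^{(k)}\ast H_n$ is uniformly bounded for each fixed $k$, and the geometric smallness comes from $\operatorname{sinc}(z)^n$. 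Modulo this repair, your route to \eqref{mollifier} is genuinely different from the paper's: the paper works complex-analytically, showing $|\theta_n(x+iy)|\leq C(e^{Dr_0}\mu)^n$ on a strip $|x|\geq a$, $|y|\leq r_0$ (smallness of $\operatorname{sinc}^n$ there beating the Paley--Wiener growth of $\mathcal{F}^{-1}(\kappa_n)$), and then gets all derivative bounds with $\gamma^{\alpha}\alpha!$ at once from the Cauchy estimates; you instead prove the derivative bounds by hand on the real line (multinomial estimates on $\widehat{\rho}(t/2)^n$ plus the absorption $n^k\leq\gamma^k k!\,e^{n/\gamma}$), which works but requires the bookkeeping that the Cauchy estimates render unnecessary.
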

\begin{proof} It suffices to consider the case $M_p = p!$. Moreover, if $(\theta_n)_{n}$ is a one-dimensional $\{p!\}$-mollifier sequence, then $(\bm{\theta}_n)_n$ with 
$$
\bm{\theta}_n = \underbrace{\theta_n \otimes \cdots \otimes \theta_n}_{d \text{ times}}
$$
is a $d$-dimensional $\{p!\}$-mollifier sequence, since $(\theta_n)_n\in\mathcal{E}^{\{M_p\}}_{\mathcal{M}}(\R)$. Therefore we may also assume $d =1$. Let $(\kappa_n)_{n }$ be an analytic cut-off sequence for $[-2,2]$ consisting of even functions. We denote by $H$ the characteristic function of $[-1,1]$. Define 
$$
H_n = \left(\frac{n}{2}\right)^n \underbrace{H(n\,\cdot) \ast \cdots \ast H(n\, \cdot)}_{n \text{ times}}, \qquad n \in \N.
$$ 
Notice that $\operatorname*{supp} H_n \subseteq [-1,1]$ and $\int_\R H_n(x) \dx = 1$ for all $n \in \N$. Set $\chi_n = \kappa_n \ast H_n$. Then, $(\chi_n)_{n}$ is an analytic cut-off sequence for $[-1,1]$. We now show that additionally \eqref{mollifier} is satisfied (for $M_p = p!$ and, thus, $M(t) \asymp t$). Notice that 
$$
\theta_n(z) = n\mathcal{F}^{-1}(\chi_n)(nz) = n\mathcal{F}^{-1}(\kappa_n)(nz)(\operatorname*{sinc}(z))^n, \qquad z \in \C,\: n \in \N,
$$
where, as usual, 
$$
\operatorname*{sinc}(z) =\frac{\sin z}{z}=\frac{1}{2}\widehat{H}(z), \qquad z \in \C.
$$
Let $0 < a < \pi$ be arbitrary. Then
$$
|\operatorname*{sinc}(x)| \leq \operatorname*{sinc}(a) =: b < 1, \qquad a \leq |x| \leq \pi.
$$
Furthermore,
$$
|\operatorname*{sinc}(x+iy)| \leq \frac{e^{|y|}}{\pi}, \qquad |x| \geq \pi.
$$
Choose $\mu$ such that $\max\{b,1/\pi\}<\mu<1$. Then there exists $r>0$ such that
$$
|\operatorname*{sinc}(x+iy)| \leq \mu, \qquad |x| \geq a, \: |y|\leq r.
$$
Since the sequence $(\kappa_n)_{n }$ is bounded in $\mathcal{D}(\R)$ there are $C,D > 0$ such that 
$$
|z||\mathcal{F}^{-1}(\kappa_n)(z)| \leq Ce^{D|y|}, \qquad z = x+iy \in \C,\: n \in \N.
$$
Choose $0 < r_0 < r$ so small that $e^{Dr_0}\mu < 1$. Combining the above two inequalities, we obtain
$$
|\theta_n(x+iy)| \leq \frac{C(e^{Dr_0}\mu)^n}{a}, \qquad |x| \geq a, \: |y| \leq r_0, \: n \in \N.
$$
The Cauchy estimates imply that for every $c > 0$ there are $S,\delta, \gamma > 0$ such that
$$
\sup_{|x| \geq c} |\theta_n^{(\alpha)}(x)| \leq Se^{-\delta n}\gamma^{\alpha}\alpha!, \qquad \alpha \in \N_0, \: n \in \N.
$$
\end{proof}
We are ready to embed $\mathcal{E}'^{\{M_p\}}(\R^d)$ into $\Gamma_c(\R^d, \mathcal{G}^{\{M_p\}})$. Fix an analytic cut-off sequence $(\chi_n)_{n}$ for $\overline{B}(0,1)$ consisting of even functions and, say, supported in $B(0,r)$ for some $r > 1$. In addition, suppose that the associated sequence 
$$
\theta_n(x) = n^d \mathcal{F}^{-1}(\chi_n)(nx), \qquad n \in \N,
$$
is an $\{M_p\}$-mollifier sequence (we shall freely use the constant $r$, the constant $L$ in property $(c)$, and the ones appearing in \eqref{mollifier}). Since $\theta_n$ is an entire function, the convolution 
$$
(f\ast \theta_{n})(x) = \langle f(t), \theta_n(x-t)\rangle, \qquad x \in \R^d,
$$
is well-defined for $f \in \mathcal{E}'^{\{M_p\}}(\R^d)$.
\begin{proposition}\label{embedding} Let $M_p$ be a weight sequence satisfying $(M.1)$, $(M.2)$, and $(NE)$. Then, the mapping
$$
\iota_c: \mathcal{E}'^{\{M_p\}}(\R^d) \rightarrow \Gamma_c(\R^d, \mathcal{G}^{\{M_p\}}):  f \mapsto \iota_c(f) = [(f \ast \theta_n)_n],
$$
is a linear embedding. Furthermore, $\operatorname{supp}\iota_c(f) = \operatorname{supp}f$ for all $f  \in \mathcal{E}'^{\{M_p\}}(\R^d)$.
\end{proposition}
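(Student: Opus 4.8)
The plan is to check three things: that $\iota_c$ is well defined, i.e. $(f\ast\theta_n)_n$ is $\{M_p\}$-moderate and represents a \emph{compactly supported} section; that $\operatorname{supp}\iota_c(f)\subseteq\operatorname{supp}f$; and finally the reverse inclusion $\operatorname{supp}f\subseteq\operatorname{supp}\iota_c(f)$. Injectivity then comes for free: if $\iota_c(f)=0$ then $\operatorname{supp}f=\operatorname{supp}\iota_c(f)=\emptyset$, and a quasianalytic functional with empty support vanishes.

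For well-definedness I would use that $f$, being carried by $K:=\operatorname{supp}f$, is continuous on $\mathcal{E}^{\{M_p\}}[K]$: by Lemma \ref{systemnorms}, for every compact neighborhood $K_1$ of $K$ there are $r_j\in\mathcal{R}$ and $C>0$ with $|\langle f,\varphi\rangle|\le C\|\varphi\|_{K_1,r_j}$. Since $\theta_n$ is entire one may differentiate under the functional, $(f\ast\theta_n)^{(\alpha)}(x)=\langle f(t),\theta_n^{(\alpha)}(x-t)\rangle$, so that $|(f\ast\theta_n)^{(\alpha)}(x)|\le C\|\theta_n^{(\alpha)}(x-\cdot)\|_{K_1,r_j}$. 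Bounding the right-hand side through the moderate estimates for $(\theta_n)_n\in\mathcal{E}^{\{M_p\}}_{\mathcal{M}}(\R^d)$ (in the $\mathcal{R}$-projective form of Corollary \ref{projective}) and absorbing the shift $\alpha+\beta$ of the multi-index via $(M.2)$ yields $(f\ast\theta_n)_n\in\mathcal{E}^{\{M_p\}}_{\mathcal{M}}(\R^d)$; that $\iota_c(f)$ is compactly supported will follow from the next step.

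For $\operatorname{supp}\iota_c(f)\subseteq\operatorname{supp}f$, let $K''\Subset\R^d\setminus K$ and choose the continuity neighborhood $K_1$ of $K$ small enough that $c:=d(K'',K_1)>0$. For $x\in K''$ and $t\in K_1$ we have $|x-t|\ge c$, so the mollifier estimate \eqref{mollifier} gives $|\theta_n^{(\alpha)}(x-t)|\le Se^{-M(\delta n)}\gamma^{|\alpha|}M_\alpha$ for all $\alpha$. Inserting this into the continuity bound for $f$ and using that $\prod_{j=0}^{|\alpha|}r_j$ eventually dominates $\gamma^{|\alpha|}$ (as $r_j\to\infty$) produces $|(f\ast\theta_n)(x)|\le C'e^{-M(\delta n)}$ uniformly on $K''$. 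By the null characterization (Lemma \ref{nullchar}) this forces $(f\ast\theta_n)_n$ to be negligible on $\R^d\setminus K$, whence $\operatorname{supp}\iota_c(f)\subseteq K$ and $\iota_c(f)\in\Gamma_K(\R^d,\mathcal{G}^{\{M_p\}})\subseteq\Gamma_c(\R^d,\mathcal{G}^{\{M_p\}})$.

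The reverse inclusion is the heart of the matter. I would first localize using the flabbiness of $\mathcal{B}^{\{M_p\}}$: if $\iota_c(f)$ is negligible on an open $\Omega'$ and $\overline{B(x_0,2\rho)}\subseteq\Omega'$ for some $x_0\in\Omega'$, then the splitting property $(FS2)$ of Lemma \ref{Junkertheorem} lets me write $f=f_1+f_2$ with $\operatorname{supp}f_1\subseteq\overline{B(x_0,2\rho)}$ and $\operatorname{supp}f_2\cap B(x_0,\rho)=\emptyset$; applying the forward inclusion to $f_2$ and subtracting shows $\iota_c(f_1)$ is negligible on $B(x_0,\rho)$, so everything reduces to proving that a functional carried near $x_0$ whose mollifications decay on a ball about $x_0$ cannot have $x_0$ in its support. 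The main obstacle — and the reason this is far harder than in the non-quasianalytic Colombeau setting — is that $(QA)$ forbids compactly supported quasianalytic test functions, so one cannot localize by cutting off; the sheaf reduction above only shifts the difficulty to smaller support without removing it. Instead I would run the hard-analysis argument underlying Hörmander's support theorem: since $f\ast\theta_n\to f$ in $\mathcal{E}'^{\{M_p\}}(\R^d)$ and $\theta_n=n^d\mathcal{F}^{-1}(\chi_n)(n\,\cdot)$, the decay of $(f\ast\theta_n)$ near $x_0$ can be converted, via the Fourier estimates of Lemmas \ref{cutfourier} and \ref{Fourierchar}, into a carrier estimate for $f$ on a compact set avoiding $x_0$. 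By the minimality of the carrier in Proposition \ref{support} this gives $x_0\notin\operatorname{supp}f$, and combined with $x_0\notin\operatorname{supp}f_2$ it yields $\operatorname{supp}f\cap\Omega'=\emptyset$, completing the reverse inclusion and hence the proof.
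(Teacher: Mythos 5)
Your treatment of well-definedness and of the inclusion $\operatorname{supp}\iota_c(f)\subseteq\operatorname{supp} f$ is sound, and injectivity from the support identity (empty support forces $f=0$, by Proposition \ref{support}) is exactly how the paper gets it too. Your moderateness argument is a valid alternative to the paper's: you work on the duality side, differentiating under the bracket and playing the geometric factor from the bounds on $(\theta_n)_n$ against the semi-norm $\|\cdot\|_{K_1,r_j}$ of $f$, whereas the paper works on the Fourier side, bounding $\mu_{1/\lambda}(f\ast\theta_n)=\int_{\R^d}|\widehat f(\xi)|\chi_n(\xi/n)e^{M(\lambda\xi)}\,{\rm d}\xi$ and invoking Lemma \ref{Fourierchar}. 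One technical caveat: for the $\theta_n$-bounds use the plain definition of moderateness (geometric factor $h^{|\alpha+\beta|}$), not the $\mathcal{R}$-projective form of Corollary \ref{projective}; it is a fixed geometric factor $(hH)^{|\beta|}$ that gets absorbed by $\prod_{j\leq|\beta|}r_j$ after applying $(M.2)$, while a quotient of two $\mathcal{R}$-products in $|\alpha|+|\beta|$ and $|\beta|$ need not be controllable uniformly in $\beta$.

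The genuine gap is the reverse inclusion $\operatorname{supp} f\subseteq\operatorname{supp}\iota_c(f)$, which is the real content of the proposition. Your reduction via $(FS2)$ is harmless but, as you admit yourself, does not remove the difficulty; after it everything rests on the claim that negligibility of $(f\ast\theta_n)_n$ near $x_0$ ``can be converted, via the Fourier estimates of Lemmas \ref{cutfourier} and \ref{Fourierchar}, into a carrier estimate for $f$ avoiding $x_0$''. That sentence is precisely the theorem to be proved, and those two lemmas alone do not deliver it. The paper's actual mechanism is: (a) pass to a larger weight sequence $N_p$ with $M_p\prec N_p$ (via Lemma \ref{systemnorms} and \cite[Lemma 2.3]{Prangoski}) so that $f$ is continuous for a single Banach norm of class $N_p$; (b) establish the representation $\langle f,\varphi\rangle=\lim_{n}\int_{\R^d}(f\ast\theta_n)\kappa\varphi\,\dx$ through Lemma \ref{lemma1} --- note that your assertion ``$f\ast\theta_n\to f$ in $\mathcal{E}'^{\{M_p\}}(\R^d)$'' is not even well posed without a cut-off, since $\theta_n\ast\varphi$ need not exist for $\varphi\in\mathcal{E}^{\{M_p\}}(\R^d)$ of unrestricted growth; and, crucially, (c) telescope $\chi_n(\cdot/n)$ into pieces $\psi_m$ and split each term $f\ast\mathcal{F}^{-1}(\psi_m)$ with analytic cut-offs $\kappa_{p_m}$, chosen via Lemma \ref{lemma2} so that $\sum_m(f\ast\mathcal{F}^{-1}(\psi_m))(\kappa-\kappa_{p_m})$ converges to some $g\in\mathcal{D}(\Omega)$ (this is where negligibility off $K$ enters), while $\sum_m\langle f,\mathcal{F}^{-1}(\psi_m)\ast(\kappa_{p_m}\varphi)\rangle=\langle f,T\varphi\rangle$ with $T:\mathcal{E}^{\{M_p\}}(\Omega)\to\mathcal{E}^{\{N_p\}}(\R^d)$ continuous. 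The resulting identity $f=g+{}^tT(f)$ exhibits $f$ as an element of $\mathcal{E}'^{\{M_p\}}(\Omega)$ for every open $\Omega\Supset\operatorname{supp}\iota_c(f)$, and minimality of the carrier (Proposition \ref{support}) finishes the proof. Without this operator-theoretic decomposition, or a genuine substitute for it, your final step is an appeal to the conclusion rather than a proof of it.
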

We need two lemmas in preparation for the proof of Proposition \ref{embedding}. As customary, we denote by 
$$m(t) := \sum_{m_p \leq t} 1,\qquad t\geq0,$$
the counting function of the sequence $(m_p)_{p \in \N}$. Notice that, by $(M.1)$,
$$
\frac{t^{m(t)}}{M_{m(t)}}=\sup_{p\in\N_0}\frac{t^p}{M_p}=e^{M(t)},
\qquad t \geq 0.
$$
\begin{lemma}\label{lemma1}
Let $\Omega \subseteq \R^d$ be open and let $\Omega'$ be a relatively compact open subset of $\Omega$. Let $\kappa \in \mathcal{D}(\Omega)$ be such that $0 \leq \kappa \leq 1$ and $\kappa \equiv 1$ on a neighborhood of $\overline{\Omega'}$. Then,
$$
 ((\kappa \varphi) \ast \theta_n - \varphi)_n \in  \mathcal{E}^{\{M_p\}}_{\mathcal{N}}(\Omega'),
$$ 
for all $\varphi \in  \mathcal{E}^{\{M_p\}}(\Omega)$. In particular,
$$
 (\kappa \varphi) \ast \theta_n \to \varphi, \qquad \mbox{as } n \to \infty, \mbox{ in } \mathcal{E}^{\{M_p\}}(\Omega').
$$ 
\end{lemma}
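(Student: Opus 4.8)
The plan is to reduce, via the null characterization of Lemma~\ref{nullchar}, to a zeroth order estimate. First I would check that $((\kappa\varphi)\ast\theta_n)_n$ is $\{M_p\}$-moderate on every $K\Subset\Omega'$: since $((\kappa\varphi)\ast\theta_n)^{(\alpha)}=(\kappa\varphi)\ast\theta_n^{(\alpha)}$ and $\kappa\varphi\in\mathcal{D}(\Omega)$, one has $\|((\kappa\varphi)\ast\theta_n)^{(\alpha)}\|_{L^\infty(K)}\le\|\kappa\varphi\|_{L^1}\|\theta_n^{(\alpha)}\|_{L^\infty(\R^d)}$, and moderateness follows from $(\theta_n)_n\in\mathcal{E}^{\{M_p\}}_{\mathcal{M}}(\R^d)$. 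As $(\varphi)_n$ is moderate too, the whole sequence $((\kappa\varphi)\ast\theta_n-\varphi)_n$ lies in $\mathcal{E}^{\{M_p\}}_{\mathcal{M}}(\Omega')$, so by Lemma~\ref{nullchar} it suffices to produce, for each $K\Subset\Omega'$, a $\lambda>0$ with $\sup_n\sup_{x\in K}|(\kappa\varphi)\ast\theta_n(x)-\varphi(x)|\,e^{M(\lambda n)}<\infty$.

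The obstruction to a direct Fourier estimate is that $\kappa$, being compactly supported, cannot be ultradifferentiable (quasianalyticity), so $\widehat{\kappa\varphi}$ decays only rapidly and not like $e^{-M(\,\cdot\,)}$. I would therefore trade $\kappa$ for an analytic cut-off sequence. Fix $c>0$ so small that $x-y$ stays in the neighbourhood of $\overline{\Omega'}$ on which $\kappa\equiv1$ whenever $x\in K$ and $|y|\le c$, and choose an analytic cut-off sequence $(\tilde\chi_n)_n$ (in the sense recalled before Lemma~\ref{cutfourier}) with $\tilde\chi_n\equiv1$ on a neighbourhood of $\overline{\Omega'}$ and supported in $\Omega$. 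Then $\kappa-\tilde\chi_n$ vanishes near $\overline{\Omega'}$, so $(\kappa-\tilde\chi_n)\varphi$ is supported at distance $\ge c$ from $K$; for $x\in K$ the mollifier property \eqref{mollifier} gives $|((\kappa-\tilde\chi_n)\varphi)\ast\theta_n(x)|\le S e^{-M(\delta n)}\|(\kappa-\tilde\chi_n)\varphi\|_{L^1}$, which is negligible (the $L^1$-norms are bounded, $(\tilde\chi_n)_n$ being bounded in $\mathcal{D}$). Thus it remains to estimate $(\tilde\chi_n\varphi)\ast\theta_n-\varphi$ on $K$.

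Here the Fourier method applies. Since $\widehat{\theta_n}=\chi_n(\,\cdot\,/n)$ equals $1$ for $|\xi|\le(1+\epsilon_0)n$ (as $\chi_n\equiv1$ on a neighbourhood of $\overline{B}(0,1)$) and $\tilde\chi_n\varphi=\varphi$ near any $x\in K$, one gets
\[
(\tilde\chi_n\varphi)\ast\theta_n(x)-\varphi(x)=(2\pi)^{-d}\int_{|\xi|\ge(1+\epsilon_0)n}(\chi_n(\xi/n)-1)\,\widehat{\tilde\chi_n\varphi}(\xi)\,e^{ix\xi}\,\dxi,
\]
so that $|(\tilde\chi_n\varphi)\ast\theta_n(x)-\varphi(x)|\le(2\pi)^{-d}\int_{|\xi|\ge(1+\epsilon_0)n}|\widehat{\tilde\chi_n\varphi}(\xi)|\,\dxi$. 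I would bound $\widehat{\tilde\chi_n\varphi}$ by the estimate underlying Lemma~\ref{cutfourier}, but used with $q\le n$ derivatives rather than exactly $n$: integrating by parts $q$ times against $\tilde\chi_n$ yields $|\xi|^q|\widehat{\tilde\chi_n\varphi}(\xi)|\le C b^q M_q$ for all $q\le n$, with $b$ depending only on the class of $\varphi$ and on $(\tilde\chi_n)_n$. Optimising over $q$ gives $|\widehat{\tilde\chi_n\varphi}(\xi)|\le C e^{-M(|\xi|/b)}$ for $(1+\epsilon_0)n\le|\xi|\le b\,m_n$, while for $|\xi|>b\,m_n$ the choice $q=n$ gives $|\widehat{\tilde\chi_n\varphi}(\xi)|\le C b^n M_n|\xi|^{-n}$; integrating both ranges (the first by the $(M.2)$-domination of $\int e^{-M(\,\cdot\,)}$ by its value at the lower endpoint, the second using $M_n/m_n^n=e^{-M(m_n)}$ together with $m_n\ge(1+\epsilon_0)n/b$ for large $n$) bounds the whole integral by $C\,\mathrm{poly}(n)\,e^{-M(\lambda n)}$, hence by $Ce^{-M(\lambda'n)}$ after shrinking $\lambda$.

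The main obstacle is precisely this last frequency estimate: one must use the analytic cut-off sequence with the \emph{optimal} number of derivatives to convert the band-limiting of $\theta_n$ at scale $n$ into genuine $e^{-M(\lambda n)}$ decay, since the naive power-$n$ bound of Lemma~\ref{cutfourier} is too lossy for rapidly growing $M_p$ (e.g.\ when $m_n/n\to\infty$). A dual, physical-space argument is possible in principle---$\theta_n$ has all vanishing moments because $\chi_n\equiv1$ near the origin, and one may Taylor expand $\varphi$ to an order $N=N(n)$ and balance the remainder against the concentration of $\theta_n$---but it requires the same delicate optimisation and the same sharp control of $\theta_n$, so I would follow the Fourier route. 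Collecting the three contributions yields $\sup_{x\in K}|(\kappa\varphi)\ast\theta_n(x)-\varphi(x)|\le Ce^{-M(\lambda n)}$, whence $((\kappa\varphi)\ast\theta_n-\varphi)_n\in\mathcal{E}^{\{M_p\}}_{\mathcal{N}}(\Omega')$ by Lemma~\ref{nullchar}. The final assertion is then immediate, since a negligible sequence $(g_n)_n$ satisfies $\|g_n\|_{K,h}\le Ce^{-M(\lambda n)}\to0$ for suitable $h$, i.e.\ $g_n\to0$ in $\mathcal{E}^{\{M_p\}}(\Omega')$.
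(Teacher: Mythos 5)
Your proposal has the same skeleton as the paper's proof: moderateness plus the reduction to a zeroth-order sup-estimate via Lemma \ref{nullchar}, the replacement of the fixed cut-off $\kappa$ by analytic cut-offs with the difference controlled by the mollifier property \eqref{mollifier}, and a Fourier estimate on the region where $\chi_n(\cdot/n)\neq 1$. The first three steps are correct. The gap is exactly in the step you flag as the main obstacle: the claimed inequality $|\xi|^q|\widehat{\tilde\chi_n\varphi}(\xi)|\le Cb^qM_q$ for \emph{all} $q\le n$, with $b$ independent of $n$, does not follow from integration by parts, and nothing in the definition of an analytic cut-off sequence supports it. When $q$ derivatives hit $\tilde\chi_n\varphi$, Leibniz's rule produces terms with derivatives of order $\beta$ on $\tilde\chi_n$, and the only available bounds are $\|\tilde\chi_n^{(\beta)}\|_{L^\infty}\le L(Ln)^{|\beta|}$ (property $(c)$, tied to the index $n$, not to $|\beta|$) and, for each \emph{fixed} $\beta$, a constant $C_\beta$ from boundedness in $\mathcal{D}(\Omega)$, over whose growth in $|\beta|$ you have no control. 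What integration by parts actually gives is therefore
$$
|\xi|^q\,|\widehat{\tilde\chi_n\varphi}(\xi)| \;\le\; C\,M_q\,(Ln+h)^q, \qquad q\le n,
$$
and optimizing this over $q$ yields decay of order $e^{-M(|\xi|/(Ln+h))}$, i.e.\ decay in $|\xi|/n$ rather than in $|\xi|$. At the bottom of your integration range, $|\xi|\asymp n$, this is $O(1)$: no decay at all, so the integral over the band $(1+\epsilon_0)n\le|\xi|\le b\,m_n$ cannot be bounded by $e^{-M(\lambda n)}$ this way. (Your tail estimate for $|\xi|>b\,m_n$ with exactly $q=n$ derivatives is fine -- that is just Lemma \ref{cutfourier} -- but the middle band is where the work lies, and it is nonempty already in the hyperfunction case $M_p=p!$.)

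The paper's resolution is to de-synchronize the two indices rather than to differentiate fewer times than the cut-off index permits. It fixes an analytic cut-off sequence $(\rho_p)_p$ for $\overline{\Omega'}$, obtains from Lemma \ref{cutfourier} constants $C,h$ with $|\xi|^p|\widehat{\rho_p\varphi}(\xi)|\le Ch^pM_p$ for every $p$, and then pairs $\theta_n$ with $\kappa_n:=\rho_{p_n}$, where $p_n=m(n/(Hh))+d+1$ is chosen through the counting function $m(\cdot)$ of $(m_p)_p$. Since the number of integrations by parts equals the index of the cut-off, no mismatch ever occurs, and for $|\xi|\ge n$ one gets, writing $m=m(n/(Hh))$ and using $(M.2)$ together with the identity $t^{m(t)}/M_{m(t)}=e^{M(t)}$,
$$
\frac{h^{p_n}M_{p_n}}{|\xi|^{p_n}} \;\le\; \frac{A(Hh)^{d+1}M_{d+1}}{|\xi|^{d+1}}\cdot\frac{(Hh)^{m}M_{m}}{n^{m}} \;=\; \frac{A(Hh)^{d+1}M_{d+1}}{|\xi|^{d+1}}\,e^{-M(n/(Hh))},
$$
which is integrable in $\xi$ and produces the genuine $e^{-M(n/(Hh))}$ decay. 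This choice of $p_n$ is precisely the ``optimization over the number of derivatives'' you were aiming for, implemented by selecting \emph{which} member of the cut-off sequence accompanies $\theta_n$. Your argument is repaired by substituting $\rho_{p_n}$ for $\tilde\chi_n$ throughout: your mollifier-property step only uses that the cut-offs equal $1$ on a fixed neighborhood of $\overline{\Omega'}$ and form a bounded sequence in $\mathcal{D}(\Omega)$, which $(\rho_{p_n})_n$ satisfies.
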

\begin{proof}
Fix $\varphi \in \mathcal{E}^{\{M_p\}}(\Omega)$. We claim that 
\begin{equation}
((\kappa\varphi) \ast \theta_n - (\kappa_n \varphi) \ast \theta_n)_n \in \mathcal{E}^{\{M_p\}}_\mathcal{N}(\Omega') 
\label{claim2}
\end{equation}
for any bounded sequence $(\kappa_n)_{n}$ in $\mathcal{D}(\Omega)$ such that $0 \leq \kappa_n \leq 1$ and for which there is a neighborhood $U$ of $\overline{\Omega'}$ such that $\kappa_n  \equiv 1$ on $U$ for all $n \in \N$. Before proving the claim, we show how it implies the result. Let $(\rho_p)_{p}$ be an analytic cut-off sequence for $\overline{\Omega'}$ supported in $\Omega$. Lemma \ref{cutfourier} implies that there are $C,h > 0$ such that
$$
|\xi|^p|\widehat{\rho_p\varphi}(\xi)| \leq Ch^{p}M_p, \qquad p \in \N, \xi \in \R^d.
$$
Set $p_n = m(n/(Hh)) +d +1$ and $\kappa_n = \rho_{p_n}$, $n \in \N$. By the claim it suffices to show that
$$
((\kappa_n \varphi) \ast \theta_n - \varphi)_n \in \mathcal{E}^{\{M_p\}}_\mathcal{N}(\Omega'). 
$$
For $K \Subset \Omega'$ we have
\begin{align*}
\sup_{x \in K}|((\kappa_n \varphi) \ast \theta_n)(x) - \varphi(x)| &\leq \sup_{x \in K}|((\kappa_n \varphi) \ast \theta_n)(x) - (\kappa_n\varphi)(x)|  \\
& \leq \frac{1}{(2\pi)^d}\int_{\R^d} |\widehat{\kappa_n\varphi}(\xi)|(1 -\chi_n(\xi/n))\dxi \\
& \leq \frac{AC(Hh)^{d+1}M_{d+1}}{(2\pi)^d} \int_{|\xi| \geq n} \frac{(Hh)^{m(n/(Hh))}M_{m(n/(Hh))}}{|\xi|^{m(n/(Hh)) +d +1}} \dxi \\
& \leq D e^{-M(n/(Hh))},
\end{align*}
where 
$$
D = \frac{AC(Hh)^{d+1}M_{d+1}}{(2\pi)^d} \int_{|\xi| \geq 1} \frac{1}{|\xi|^{d +1}} \dxi < \infty,
$$
whence the result follows (cf.\ Lemma \ref{nullchar}). We now show \eqref{claim2}. We first prove that
$$
((\kappa_n \varphi) \ast \theta_n)_n \in \mathcal{E}^{\{M_p\}}_\mathcal{M}(\R^d). 
$$
Since the sequence $(\kappa_n \varphi)_n$ is bounded in $\mathcal{D}(\R^d)$ there is $C > 0$ such that
$$
|\widehat{\kappa_n \varphi}(\xi)| \leq \frac{C}{(1+ |\xi|)^{d+1}}, \qquad \xi \in \R^d,\: n \in \N.
$$
For $\lambda > 0$ we have
$$
\mu_{1/\lambda}((\kappa_n \varphi) \ast \theta_n) = \int_{\R^d} |\widehat{\kappa_n\varphi}(\xi)| \chi_n(\xi/n)e^{M(\lambda \xi)}\dxi \leq Ce^{M(\lambda r n)}\int_{\R^d}\frac{1}{(1+ |\xi|)^{d+1}} \dxi,
$$
and therefore the sequence is moderate by Lemma \ref{Fourierchar}. Hence, by Lemma \ref{nullchar}, it suffices to show that for every $K \Subset \Omega'$ there is $\lambda > 0$ such that
$$
\sup_{n \in \N}\sup_{x \in K}|((\kappa\varphi) \ast \theta_n)(x) - ((\kappa_n \varphi) \ast \theta_n)(x)|e^{M(\lambda n)} < \infty.
$$
Choose $c > 0$ such that $\kappa - \kappa_n \equiv 0$ on $K + B(0,c)$ and let $K_0 \Subset \Omega$ be such that $\operatorname*{supp}(\kappa - \kappa_n) \subseteq K_0$ for all $n \in \N$. Then,
\begin{align*}
\sup_{x \in K}|((\kappa\varphi) \ast \theta_n)(x) - ((\kappa_n \varphi) \ast \theta_n)(x)| &\leq \sup_{x \in K}\left|\int_{\R^d}(\kappa(x-t) - \kappa_n(x-t))\varphi(x-t)  \theta_n(t) \dt \right| \\
&\leq 2S\|\varphi\|_{L^\infty(K_0)}  |K_0| e^{-M(\delta n)}.
\end{align*}
\end{proof}
For $h > 0$, we write $\mathcal{E}^{M_p,h}_{\infty}(\R^d)$ for the Banach space consisting of all  $\varphi \in C^\infty(\R^d)$ such that
$$
\sup_{\alpha \in \N_0^d}\sup_{x \in \R^d} \frac{|\varphi^{(\alpha)}(x)|}{h^{|\alpha|}M_{\alpha}} < \infty.
$$
\begin{lemma}\emph{(cf.\ first part of \cite[Thm. 3.4]{Hormander})} \label{lemma2}Let $h > 0$ and let $\Omega$ be a relatively compact open subset of $\R^d$. Suppose that $(\kappa_p)_{p }$ is an analytic cut-off sequence supported in $\Omega$ and that $(\psi_n)_{n }$ is a uniformly bounded sequence of continuous functions on $\R^d$ such that $\operatorname*{supp}(\psi_1) \subseteq B(0,r)$ and 
$$
\operatorname*{supp}(\psi_n) \subseteq B(0,rn) \backslash \overline{B}(0,n-1), \qquad n \geq 2. 
$$ 
Then, there are a sequence $(p_n)_{n }$ of natural numbers and $k > 0$ such that
$$
R(\varphi) := \sum_{n=1}^\infty(\kappa_{p_n}\varphi) \ast \mathcal{F}^{-1}(\psi_n) \in \mathcal{E}^{M_p,k}_{\infty}(\R^d), \qquad \varphi \in \mathcal{E}^{M_p,h}(\overline{\Omega}).
$$
Moreover, the convergence of the series $R(\varphi)$ holds in the topology of $\mathcal{E}^{M_p,k}_{\infty}(\R^d)$ and  the mapping $R:  \mathcal{E}^{M_p,h}(\overline{\Omega}) \rightarrow \mathcal{E}^{M_p,k}_{\infty}(\R^d)$ is continuous.
\end{lemma}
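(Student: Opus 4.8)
The plan is to move to the Fourier side and reduce the whole statement to one summability estimate. Write $g_n := (\kappa_{p_n}\varphi)\ast\mathcal{F}^{-1}(\psi_n)$, so that $R(\varphi)=\sum_{n}g_n$; by the convolution theorem and $\mathcal{F}\mathcal{F}^{-1}=\operatorname{id}$ the Fourier transform of $g_n$ is $\widehat{g_n}=\widehat{\kappa_{p_n}\varphi}\,\psi_n$, a continuous, compactly supported (hence integrable) function. The quantity I would control, for a single $k>0$ to be fixed later, is
$$
\mu_k(g_n)=\int_{\operatorname{supp}\psi_n}|\widehat{\kappa_{p_n}\varphi}(\xi)|\,|\psi_n(\xi)|\,e^{M(\xi/k)}\dxi.
$$
Once I have $\sum_n\mu_k(g_n)\le C\|\varphi\|_{\overline{\Omega},h}$ with $C$ independent of $\varphi$, everything follows at once: Lemma \ref{Fourierchar} applied to each $g_n$ gives $g_n\in\mathcal{E}^{M_p,k}_\infty(\R^d)$ with $\mathcal{E}^{M_p,k}_\infty$-norm at most $\mu_k(g_n)/(2\pi)^d$, so the series converges absolutely in the Banach space $\mathcal{E}^{M_p,k}_\infty(\R^d)$, its sum is $R(\varphi)$, and the bound $\sum_n\mu_k(g_n)\le C\|\varphi\|_{\overline{\Omega},h}$ delivers both membership of $R(\varphi)$ and continuity of $R$ simultaneously.

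For the per-term estimate I would apply Lemma \ref{cutfourier} to the cut-off sequence $(\kappa_p)_p$ at the index $p_n$. Writing $L':=\sqrt{d}(h+Lk_0)$, where $k_0$ is the constant of Lemma \ref{cutfourier} with $p^p\le Ck_0^pM_p$, this yields $|\xi|^{p_n}|\widehat{\kappa_{p_n}\varphi}(\xi)|\le CL|\Omega|\,\|\varphi\|_{\overline{\Omega},h}\,L'^{p_n}M_{p_n}$. On $\operatorname{supp}\psi_n$ with $n\ge2$ one has $|\xi|\ge n-1$, so
$$
|\widehat{\kappa_{p_n}\varphi}(\xi)|\le C'\|\varphi\|_{\overline{\Omega},h}\Big(\frac{L'}{n-1}\Big)^{p_n}M_{p_n}.
$$
The decisive freedom is the choice of $p_n$: taking $p_n:=m((n-1)/L')$ and using the identity $t^{m(t)}/M_{m(t)}=e^{M(t)}$ recorded above collapses this factor to exactly $e^{-M((n-1)/L')}$. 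Combining this with the uniform bound $B:=\sup_n\|\psi_n\|_\infty$, the volume bound $|\operatorname{supp}\psi_n|\le C_d(rn)^d$, and $e^{M(\xi/k)}\le e^{M(rn/k)}$ on $\operatorname{supp}\psi_n$, I obtain
$$
\mu_k(g_n)\le C''\|\varphi\|_{\overline{\Omega},h}\,(rn)^d\,e^{M(rn/k)-M((n-1)/L')},\qquad n\ge2,
$$
while the single term $n=1$ is bounded by $C''\|\varphi\|_{\overline{\Omega},h}$ because there $\widehat{\kappa_{p_1}\varphi}$ is bounded on the fixed ball $B(0,r)$.

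It remains to choose $k$ making $\sum_n(rn)^d e^{M(rn/k)-M((n-1)/L')}$ convergent, and this is the heart of the matter: I must play the growth coming from the weight $e^{M(\xi/k)}$ against the decay $e^{-M((n-1)/L')}$ produced by the high-order cut-off. Here condition $(M.2)$ in its associated-function form $2M(t)\le M(Ht)+\log A$ is exactly what is needed. Choosing $k:=2HrL'$ guarantees $(n-1)/L'\ge H\,rn/k$ for every $n\ge2$, whence $M((n-1)/L')\ge M(Hrn/k)\ge 2M(rn/k)-\log A$ and therefore $M(rn/k)-M((n-1)/L')\le -M(rn/k)+\log A$. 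Since $M(t)\ge p\log t-\log M_p$ for every $p$ forces $M(t)/\log t\to\infty$, the factor $e^{-M(rn/k)}$ decays faster than any power of $n$; hence $(rn)^d e^{-M(rn/k)}$ is summable and $\sum_n\mu_k(g_n)\le C\|\varphi\|_{\overline{\Omega},h}$ holds. This closes the reduction and proves the lemma. The only genuine work is the interplay, in the choice of $(p_n)$ and $k$, between the counting-function identity (which manufactures the decay) and $(M.2)$ (which makes that decay outrun the exponential weight); the remaining steps are routine bookkeeping.
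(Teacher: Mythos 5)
Your proof is correct and takes essentially the same route as the paper's: reduce everything to the bound $\sum_n \mu_k\bigl((\kappa_{p_n}\varphi)\ast\mathcal{F}^{-1}(\psi_n)\bigr)\leq C\|\varphi\|_{\overline{\Omega},h}$, use Lemma~\ref{cutfourier} together with the counting-function identity $t^{m(t)}/M_{m(t)}=e^{M(t)}$ to choose $p_n$ and manufacture the decay $e^{-M((n-1)/L')}$, pick $k$ proportional to $HrL'$ so that $(M.2)$ in its associated-function form beats the weight $e^{M(\xi/k)}$, and conclude membership, convergence of the series and continuity of $R$ from Lemma~\ref{Fourierchar}. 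The only repair needed is trivial: $m((n-1)/L')$ vanishes for finitely many small $n\geq 2$ (whenever $(n-1)/L'<m_1$), so one should set $p_n=\max\{1,m((n-1)/L')\}$ and absorb those finitely many terms into the crude bound you already use for $n=1$, exactly as the paper does for the terms $n\leq k_0$.
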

\begin{proof} By Lemma \ref{cutfourier} there are $C, k_0 >0$ such that
$$
|\xi|^p|\widehat{\kappa_p\varphi}(\xi)| \leq C\|\varphi\|_{\overline{\Omega},h} k_0^{p}M_p, \qquad p \in \N, \: \xi \in \R^d,
$$
for all $\varphi \in \mathcal{E}^{M_p,h}(\overline{\Omega})$. For $p = m(t)$, $t \geq 0$, and $k_0 t \leq |\xi| \leq 2r k_0t$, we obtain
$$
|\widehat{\kappa_{m(t)}\varphi}(\xi)| \leq C\|\varphi\|_{\overline{\Omega},h}e^{-M(\xi/(2k_0r))}.
$$
Set $p_n = \max\{1,m((n-1)/k_0)\}$, $n \in \N$. Hence,
$$
|\widehat{\kappa_{p_n}\varphi}(\xi)| \leq C\|\varphi\|_{\overline{\Omega},h}e^{-M(\xi/(2k_0r))}, \qquad (n-1) \leq |\xi| \leq rn,\:  n\geq k_0 + 1.
$$
Choose $C' > 0$ such that $\|\psi_n \|_{L^\infty(\R^d)} \leq C'$ for all $n \in \N$. Then, for $k = 2H^2k_0r$, we have
\begin{align*}
\mu_k\left( \sum_{n \geq k_0 + 1}^\infty (\kappa_{p_n}\varphi) \ast  \mathcal{F}^{-1}(\psi_n)  \right) &\leq \sum_{n \geq k_0 +1}^\infty \int_{\R^d} |\psi_n(\xi)| |\widehat{\kappa_{p_n}\varphi}(\xi)| e^{M(\xi/k)}\dxi \\ 
&\leq C'C\|\varphi\|_{\overline{\Omega},h} \sum_{n \geq k_0 +1}^\infty \int_{(n-1) \leq |\xi| \leq rn}   e^{M(\xi/k)-M(\xi/(2k_0r))}\dxi \\
&\leq D\|\varphi\|_{\overline{\Omega},h},
\end{align*}
where
$$
D = A^2 C'C\sum_{n=k_0}^\infty e^{-M(n/k)}\int_{\R^d}e^{-M(\xi/(2Hk_0r))}\dxi < \infty.
$$
We also have
$$
\mu_k((\kappa_{p_n}\varphi) \ast  \mathcal{F}^{-1}(\psi_n)) \leq C'e^{M(rk_0/k)}|B(0,rk_0)| |\Omega|\|\varphi\|_{L^\infty(\Omega)},
$$
for $n \leq k_0$. The result now follows from Lemma \ref{Fourierchar}.
\end{proof}
\begin{proof}[Proof of Proposition \ref{embedding}]
STEP I: \emph{$(f \ast \theta_n)_n \in \mathcal{E}^{\{M_p\}}_\mathcal{M}( \R^d)$ for all $f \in \mathcal{E}'^{\{M_p\}}( \R^d)$}: Let $\lambda > 0$ be arbitrary. Choose $C > 0$ such that
$$
|\widehat{f}(\xi)| \leq C e^{M(\lambda\xi/H)}, \qquad \xi \in \R^d.
$$
We have
\begin{align*}
\mu_{1/\lambda}(f \ast \theta_n) &= \int_{\R^d} |\widehat{f}(\xi)| \chi_n(\xi/n) e^{M(\lambda \xi)} \dxi
\leq C \int_{|\xi| \leq rn} e^{M(\lambda \xi/H) + M(\lambda \xi)} \dxi  \leq De^{M(H\lambda r n)},
\end{align*}
where
$$
D = A^2C\int_{\R^d} e^{-M(\lambda \xi / H)}\,d\xi < \infty.
$$
The result follows from Lemma \ref{Fourierchar}.

STEP II: \emph{$\operatorname{supp}\iota_c(f) \subseteq \operatorname{supp}f$ for all $f  \in \mathcal{E}'^{\{M_p\}}(\R^d)$}. Let $K \Subset \R^d \backslash \operatorname*{supp} f$ be arbitrary. Choose $K'\Subset\R^d$ such that a neighborhood of $\operatorname*{supp} f$ is contained in $K'$ and $K'\cap K=\emptyset$. Set $c:= d(K,K') > 0$.  The continuity of $f$ implies that for each $h>0$ there is $C>0$ such that 
$$
\sup_{x \in K}|(f \ast \theta_n)(x)| \leq C \|\theta_n\|_{K-K', h}.
$$
By Property \eqref{mollifier} we have that
$$ 
\| \theta_n \|_{K-K', \gamma} = \sup_{\alpha \in \N_0^d}\sup_{x \in K- K'} \frac{|\theta_n^{(\alpha)}(x)|}{\gamma^{|\alpha|}M_{\alpha}} \leq Se^{-M(\delta n)}.
$$
We then obtain that $\iota_{c}(f)$ vanishes on $\R^d \backslash \operatorname*{supp} f$ from Lemma \ref{nullchar}.

STEP III: \emph{$\operatorname{supp}f \subseteq \operatorname{supp}\iota_c(f)$ for all $f  \in \mathcal{E}'^{\{M_p\}}(\R^d)$}. Set $K = \operatorname{supp}\iota_c(f)$. We need to show that $f \in \mathcal{E}'^{\{M_p\}}(\Omega)$ for every open set $\Omega \Supset K$. Fix such a set $\Omega$. By Lemma \ref{systemnorms} and \cite[Lemma 2.3]{Prangoski} there is a weight sequence $N_p$ satisfying $(M.1)$, $(M.2)$, and $M_p \prec N_p$ such that $f \in  \mathcal{E}'^{\{N_p\}}(\R^d)$.  Choose $\rho \in \mathcal{D}(\R^d)$ such that  $0 \leq \rho \leq 1$ and $\rho \equiv 1$ on a neighborhood of $\operatorname*{supp} f$. Employing Lemma \ref{lemma1}, we deduce that
$$
\langle f , \varphi \rangle = \lim_{n \to \infty} \langle f ,  (\rho \varphi) \ast \theta_n \rangle =  \lim_{n \to \infty} \int_{\R^d} (f \ast \theta_n)(x) \rho(x) \varphi(x) \dx, \qquad \varphi \in \mathcal{E}^{\{M_p\}}(\R^d).
$$ 
Next, choose $\kappa \in \mathcal{D}(\Omega)$ such that  $0 \leq \kappa \leq 1$ and $\kappa \equiv 1$ on a neighborhood of $K$. By STEP II we have that $\kappa - \rho \equiv 0$ on a neighborhood of $K$. Therefore, $\iota_c(f)_{| \R^d \backslash K} = 0$ implies that $f \ast \theta_n \rightarrow 0$, as $ n \to \infty$, uniformly on $\operatorname*{supp}(\kappa - \rho)$. Hence,
\begin{equation}
\langle f , \varphi \rangle = \lim_{n \to \infty}\int_{\R^d} (f \ast \theta_n)(x) \kappa(x) \varphi(x) \dx, \qquad \varphi \in \mathcal{E}^{\{M_p\}}(\R^d).
\label{reduction1}
\end{equation} 
We now invoke Lemma \ref{lemma2}. Set $\psi_1 = \chi_1$ and
$$
\psi_n = \chi_n\left(\frac{\cdot}{n}\right) - \chi_{n-1}\left(\frac{\cdot}{n-1}\right), \qquad n \geq 2.
$$ 
Clearly, the sequence $(\psi_n)_{n}$ satisfies the requirements of Lemma \ref{lemma2}. Choose a relatively compact open subset $\Omega'$ such that $K \Subset \Omega' \Subset \Omega$ and an analytic cut-off sequence $(\kappa_p)_{p }$ for $K$ supported in $\Omega'$. According to Lemma \ref{lemma2} (applied to the weight sequence $N_p$), there are a sequence $(p_n)_{n }$ and $k > 0$ such that the mapping
$$
R: \mathcal{E}^{N_p,1}(\overline{\Omega'}) \rightarrow \mathcal{E}^{N_p,k}_{\infty}(\R^d): \varphi \rightarrow R(\varphi) = \sum_{n=1}^\infty  (\kappa_{p_n}\varphi) \ast \mathcal{F}^{-1}({\psi}_n) 
$$
is continuous. Consider the following continuous inclusion mappings 
$$
\iota_1: \mathcal{E}^{\{M_p\}}(\Omega) \rightarrow \mathcal{E}^{N_p,1}(\overline{\Omega'}), \qquad \iota_2: \mathcal{E}^{N_p,k}_\infty(\R^d) \rightarrow \mathcal{E}^{\{N_p\}}(\R^d),
$$
and set $T = \iota_2 \circ R \circ \iota_1 : \mathcal{E}^{\{M_p\}}(\Omega) \rightarrow  \mathcal{E}^{\{N_p\}}(\R^d)$. Equality \eqref{reduction1} gives us
\begin{align*}
\langle f , \varphi \rangle =  &\int_{\R^d} \sum_{n = 1}^\infty(f \ast \mathcal{F}^{-1}(\psi_n))(x) (\kappa(x) - \kappa_{p_n}(x)) \varphi(x) \dx \, + \\
&  \sum_{n = 1}^\infty \int_{\R^d} (f \ast \mathcal{F}^{-1}(\psi_n))(x)\kappa_{p_n}(x) \varphi(x) \dx, 
\end{align*}
for all $\varphi \in \mathcal{E}^{\{M_p\}}(\R^d)$. Since $(\kappa - \kappa_{p_n})_{n}$ is a bounded sequence in $\mathcal{D}(\Omega \backslash K)$, the assumption $\iota_c(f)_{| \R^d \backslash K} = 0$ yields 
$$
g = \sum_{n = 1}^\infty(f \ast \mathcal{F}^{-1}(\psi_n)) (\kappa - \kappa_{p_n}) \in \mathcal{D}(\Omega).
$$
For the second term, we have 
$$
\sum_{n = 1}^\infty \int_{\R^d} (f \ast \mathcal{F}^{-1}(\psi_n))(x)\kappa_{p_n}(x) \varphi(x) \dx = \sum_{n = 1}^\infty \langle f,  \mathcal{F}^{-1}(\psi_n) \ast (\kappa_{p_n}\varphi) \rangle = \langle f, T(\varphi) \rangle, 
$$
for all $\varphi \in \mathcal{E}^{\{M_p\}}(\R^d)$. Hence $f = g + {}^tT(f) \in \mathcal{E}'^{\{M_p\}}(\Omega)$.
\end{proof}
\begin{proposition}\label{soft}
Let $M_p$ be a weight sequence satisfying $(M.1)$, $(M.2)$, $(M.2)^*$, $(QA)$, and $(NE)$. Then, the sheaf $\mathcal{G}^{\{M_p\}}$ is soft.
\end{proposition}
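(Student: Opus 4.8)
The plan is to reduce softness to the existence of suitable \emph{generalized cutoff functions} living inside $\mathcal{G}^{\{M_p\}}(\R^d)$, and then to exploit the algebra and sheaf structure of $\mathcal{G}^{\{M_p\}}$ to cut off a section and extend it by zero. Since $\R^d$ is locally compact, second countable and paracompact, it suffices to verify the (equivalent) c-soft property: for every compact $K \Subset \R^d$, every open neighbourhood $U$ of $K$, and every $f \in \mathcal{G}^{\{M_p\}}(U)$, there is a global section $\widetilde f \in \mathcal{G}^{\{M_p\}}(\R^d)$ agreeing with $f$ on some neighbourhood of $K$; softness for arbitrary closed sets then follows by the usual exhaustion of $\R^d$ by compacta together with patching via the sheaf property of Theorem \ref{sheaf}.

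The central construction is the following. Fix $\kappa \in \mathcal{D}(U)$ with $0 \leq \kappa \leq 1$ and $\kappa \equiv 1$ on a neighbourhood $\Omega'$ of $K$, and form the sequence $(\kappa \ast \theta_n)_n$, where $(\theta_n)_n$ is the fixed $\{M_p\}$-mollifier sequence. I claim that $\mathbf{c} := [(\kappa \ast \theta_n)_n]$ is a well defined element of $\mathcal{G}^{\{M_p\}}(\R^d)$ with $\mathbf{c} = 1$ on $\Omega'$ and $\operatorname{supp}\mathbf{c} \subseteq \operatorname{supp}\kappa$. Moderateness follows as in STEP I of the proof of Proposition \ref{embedding}: since $\widehat{\kappa \ast \theta_n}(\xi) = \widehat{\kappa}(\xi)\chi_n(\xi/n)$ and $\widehat{\kappa} \in L^1(\R^d)$, one gets $\mu_{1/\lambda}(\kappa \ast \theta_n) \leq \|\widehat{\kappa}\|_{L^1} e^{M(\lambda r n)}$, so Lemma \ref{Fourierchar} applies. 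The identity $\mathbf{c} = 1$ on $\Omega'$ is precisely Lemma \ref{lemma1} applied to the constant function $\varphi \equiv 1 \in \mathcal{E}^{\{M_p\}}(\R^d)$, which yields $(\kappa \ast \theta_n - 1)_n \in \mathcal{E}^{\{M_p\}}_{\mathcal{N}}(\Omega')$. Finally, for any open set $W$ with $d(W,\operatorname{supp}\kappa) \geq c > 0$ one writes, for $x \in W$, $(\kappa \ast \theta_n)^{(\alpha)}(x) = \int_{|t|\geq c}\kappa(x-t)\theta_n^{(\alpha)}(t)\dt$ and bounds it by $S e^{-M(\delta n)}\gamma^{|\alpha|}M_\alpha \|\kappa\|_{L^1}$ using the decay property \eqref{mollifier}, exactly as in STEP II; hence $(\kappa \ast \theta_n)_n \in \mathcal{E}^{\{M_p\}}_{\mathcal{N}}(W)$, so $\mathbf{c} = 0$ on $W$. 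As every point outside the closed set $\operatorname{supp}\kappa$ lies in such a $W$, we conclude $\operatorname{supp}\mathbf{c} \subseteq \operatorname{supp}\kappa \Subset U$.

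With the cutoff in hand, the extension is immediate. Put $g := \mathbf{c}|_U \cdot f \in \mathcal{G}^{\{M_p\}}(U)$, using that $\mathcal{G}^{\{M_p\}}(U)$ is an algebra. Because $\mathbf{c}$ vanishes on $U \setminus \operatorname{supp}\kappa$, so does $g$, i.e. $\operatorname{supp} g \subseteq \operatorname{supp}\kappa$, a compact subset of $U$. Thus $g$ and the zero section on $\R^d \setminus \operatorname{supp}\kappa$ agree on the overlap $U \setminus \operatorname{supp}\kappa$ and, since $U \cup (\R^d\setminus\operatorname{supp}\kappa) = \R^d$, the sheaf property of $\mathcal{G}^{\{M_p\}}$ glues them to a global section $\widetilde f \in \mathcal{G}^{\{M_p\}}(\R^d)$. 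On $\Omega'$ we have $\mathbf{c} = 1$, whence $\widetilde f = g = f$ there, which is the required extension.

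I expect the main obstacle to be the construction and support analysis of $\mathbf{c}$ rather than the concluding gluing: the absence of nontrivial compactly supported functions of class $\{M_p\}$ is exactly what prevents cutting off representatives $f_n$ directly, and the point is that the ordinary smooth cutoff $\kappa$ becomes admissible only after being regularised by convolution with the analytic mollifier $\theta_n$, at the cost of working modulo negligible sequences. All the quantitative content therefore sits in checking, through Lemma \ref{lemma1}, Lemma \ref{Fourierchar}, and the mollifier bound \eqref{mollifier}, that $\kappa \ast \theta_n$ behaves like a genuine cutoff up to a negligible error.
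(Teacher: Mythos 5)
Your proof is correct, but it takes a genuinely different route from the paper's. The paper extends $f=[(f_n)_n]\in\mathcal{G}^{\{M_p\}}(\Omega)$ directly at the level of representatives: it sets $g_n=(\kappa_{p_n}f_n)\ast\mathcal{F}^{-1}(\psi_n)$, where $(\kappa_p)_p$ is an analytic cut-off sequence for $\overline{\Omega'}$, the index $p_n=m(Hn)+d+1$ and the dilation $\psi_n=\psi(\cdot/(an))$ are tuned (via Lemma \ref{cutfourier}) to the moderateness constants of $(f_n)_n$, and then checks by Fourier-side estimates that $(g_n)_n\in\mathcal{E}^{\{M_p\}}_{\mathcal{M}}(\R^d)$ and $(g_n-f_n)_n\in\mathcal{E}^{\{M_p\}}_{\mathcal{N}}(\Omega')$; this produces the global extension explicitly, with no gluing whatsoever. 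You instead build an $f$-independent generalized cut-off $\mathbf{c}=[(\kappa\ast\theta_n)_n]$, multiply, and glue $\mathbf{c}f$ with the zero section via Theorem \ref{sheaf}. Your three claims about $\mathbf{c}$ are all sound: the moderateness bound from $\widehat{\kappa\ast\theta_n}=\widehat{\kappa}\,\chi_n(\cdot/n)$, the identity $\mathbf{c}=1$ near $K$ (Lemma \ref{lemma1} with $\varphi\equiv1$), and the negligibility of $(\kappa\ast\theta_n)_n$ at positive distance from $\operatorname{supp}\kappa$ (via \eqref{mollifier}); and there is no circularity, since Theorem \ref{sheaf} is established before softness and does not use it. The trade-off is this: your argument is shorter and more conceptual --- every quantitative estimate concerns only $\kappa$ and $\theta_n$, never the section being extended, which enters only through the algebra structure --- and it exhibits genuine generalized cut-offs in $\mathcal{G}^{\{M_p\}}(\R^d)$, of independent interest given that no ordinary cut-offs of class $\{M_p\}$ exist under $(QA)$. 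The cost is that it routes the difficulty through the sheaf-gluing theorem, hence through the vector-valued Cousin problem (Proposition \ref{Cousin}) and the hypothesis $(M.2)^*$, whereas the paper's extension is constructed independently of that machinery (in the paper, $(M.2)^*$ appears in the statement only because it is needed for $\mathcal{G}^{\{M_p\}}$ to be a sheaf at all, which is also what makes softness meaningful). One small repair: to apply Lemma \ref{lemma1} you need $\kappa\equiv1$ on a neighbourhood of $\overline{\Omega'}$, not merely on the open set $\Omega'$ itself, so either shrink $\Omega'$ slightly or take $\kappa\equiv1$ on a larger open set; this is trivial to arrange.
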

\begin{proof}
Let $K \Subset \R^d$ and $K \Subset \Omega$, $\Omega$ open, be arbitrary. Choose a relatively compact open set $\Omega'$ such that $K \Subset \Omega' \Subset \Omega$. It suffices to show that for every $f = [(f_n)_n] \in \mathcal{G}^{\{M_p\}}(\Omega)$ there is $g = [(g_n)_n] \in \mathcal{G}^{\{M_p\}}(\R^d)$ such that $g = f$ on $\Omega'$. Let $(\kappa_p)_p$ be an analytic cut-off sequence for $\overline{\Omega'}$ supported in $\Omega$. Lemma \ref{cutfourier} and $(f_n)_n \in \mathcal{E}^{\{M_p\}}_\mathcal{M}(\Omega)$ imply that there are $C,h > 0$ such that
$$
|\xi|^p|\widehat{\kappa_pf_n}(\xi)| \leq Ce^{M(n)}h^pM_p, \qquad \xi \in \R^d,\: p,n \in \N.
$$
Let $\psi \in \mathcal{D}(B(0,2))$ be such that $0 \leq \psi \leq 1$ and $\psi \equiv 1$ on $\overline{B}(0,1)$. Set $a = H^2h$ and 
$$
\psi_n = \psi\left( \frac{\cdot}{an}\right), \qquad n \in \N,
$$
and define
$$
g_n = (\kappa_{p_n}f_n) \ast \mathcal{F}^{-1}(\psi_n), \qquad n \in \N,
$$
where $p_n = m(Hn) + d + 1$. We first show that $(g_n)_n \in \mathcal{E}^{\{M_p\}}_\mathcal{M}(\R^d)$. 
Since the sequence $(\kappa_p)_p$ is bounded in $\mathcal{D}(\Omega)$ and $(f_n)_n \in \mathcal{E}^{\{M_p\}}_\mathcal{M}(\Omega)$, we have
$$
|\widehat{\kappa_pf_n}(\xi)| \leq \frac{D_{\lambda} e^{M(\lambda n)}}{(1+|\xi|)^{d+1}}, \qquad \xi \in \R^d, \: p,n \in \N,
$$
for every $\lambda > 0$ and suitable $D_\lambda > 0$. Hence
$$
\mu_{1/\lambda}(g_n) = \int_{\R^d} |\widehat{\kappa_{p_n}f_n}(\xi)| \psi_n(\xi)e^{M(\lambda \xi)}\dxi \leq AD_{2a\lambda}e^{M(2Ha\lambda n)}\int_{\R^d} \frac{1}{(1+|\xi|)^{d+1}}\dxi
$$
for all $\lambda >0$.  The sequence $(g_n)_n$ is therefore moderate by Lemma \ref{Fourierchar}. We still need to show that
$$
(g_n - f_n)_n \in \mathcal{E}^{\{M_p\}}_\mathcal{N}(\Omega'). 
$$
We verify this via Lemma \ref{nullchar}. For any $K \Subset \Omega'$, we have
\begin{align*}
\sup_{x \in K}|g_n(x) - f_n(x)| & \leq \sup_{x \in K}|((\kappa_{p_n} f_n) \ast \mathcal{F}^{-1}(\psi_n))(x) - (\kappa_{p_n}f_n)(x)|  \\
& \leq \frac{1}{(2\pi)^d}\int_{\R^d} |\widehat{\kappa_{p_n}{f_n}}(\xi)|(1 -\psi_n(\xi))\dxi \\
& \leq \frac{AC(Hh)^{d+1}M_{d+1}e^{M(n)}}{(2\pi)^d} \int_{|\xi| \geq an} \frac{(Hh)^{m(Hn)}M_{m(Hn)}}{|\xi|^{m(Hn) +d +1}} \dxi \\
& \leq D e^{-M(n)},
\end{align*}
where 
$$
D = \frac{A^2C(Hh)^{d+1}M_{d+1}}{(2\pi)^d} \int_{|\xi| \geq a} \frac{1}{|\xi|^{d +1}} \dxi < \infty,
$$
whence the result follows. 
\end{proof}
We have completed all necessary work to prove our main theorem. Recall that $\sigma_{\Omega}:\mathcal{E}^{\{M_p\}}(\Omega) \to \mathcal{G}^{\{M_p\}}(\Omega)$ stands for the constant embedding \eqref{eqconstantembedding}.
\begin{theorem}\label{main-qa}
Let $M_p$ be a weight sequence satisfying $(M.1)$, $(M.2)$, $(M.2)^*$, $(QA)$, and $(NE)$. Then, there is a unique injective sheaf morphism $\iota: \mathcal{B}^{\{M_p\}} \rightarrow \mathcal{G}^{\{M_p\}}$ such that for each open set $\Omega \subseteq \R^d$ the following properties hold:
\begin{enumerate}
\item[$(i)$] $\iota_{\Omega}(f) = \iota_c(f)$ for all $f \in \mathcal{E}'^{\{M_p\}}(\Omega)$.
\item[$(ii)$] For all ultradifferential operators $P(D)$ of class ${\{M_p\}}$ we have
$$
P(D)\iota_\Omega(f) = \iota_\Omega(P(D)f), \qquad f \in \mathcal{B}^{\{M_p\}}(\Omega).
$$
\item[$(iii)$] $\iota_{\Omega}(f) = \sigma_\Omega(f)$ for all $f \in \mathcal{E}^{\{M_p\}}(\Omega)$. Consequently, 
$$
\iota_{\Omega}(fg) = \iota_\Omega(f)\iota_\Omega(g),\qquad f,g  \in \mathcal{E}^{\{M_p\}}(\Omega).
$$ 
\end{enumerate}
\end{theorem}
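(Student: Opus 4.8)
The plan is to obtain $\iota$ by applying Komatsu's extension principle (Lemma \ref{extension}) to the compactly supported embedding $\iota_c$ of Proposition \ref{embedding}. Both $\mathcal{B}^{\{M_p\}}$ and $\mathcal{G}^{\{M_p\}}$ are soft sheaves on the second countable space $\R^d$ --- the former because a flabby sheaf on a paracompact space is soft, the latter by Proposition \ref{soft} --- and $\Gamma_c(\R^d,\mathcal{B}^{\{M_p\}}) = \mathcal{E}'^{\{M_p\}}(\R^d)$. Since Proposition \ref{embedding} gives $\operatorname{supp}\iota_c(f) = \operatorname{supp} f$, in particular $\operatorname{supp}\iota_c(f)\subseteq\operatorname{supp} f$, Lemma \ref{extension} yields a unique sheaf morphism $\iota:\mathcal{B}^{\{M_p\}}\to\mathcal{G}^{\{M_p\}}$ with $\iota_U(T) = \iota_c(T)$ for every open $U$ and every $T\in\Gamma_c(U,\mathcal{B}^{\{M_p\}})$. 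This is precisely property $(i)$, and since $(i)$ pins $\iota$ down on all compactly supported sections, the uniqueness clause of Lemma \ref{extension} forces any sheaf morphism satisfying $(i)$ to coincide with $\iota$, giving the asserted uniqueness. The full equality $\operatorname{supp}\iota_c(f)=\operatorname{supp} f$ then yields the injectivity of $\iota$ through the ``moreover'' part of Lemma \ref{extension}.

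For property $(ii)$ I would argue that $P(D)\circ\iota$ and $\iota\circ P(D)$ are both sheaf morphisms $\mathcal{B}^{\{M_p\}}\to\mathcal{G}^{\{M_p\}}$, using that $P(D)$ is a sheaf morphism on $\mathcal{B}^{\{M_p\}}$ by Proposition \ref{propertiesinfra}$(iii)$ and on $\mathcal{G}^{\{M_p\}}$ by Theorem \ref{sheaf}. By the uniqueness in Lemma \ref{extension} it is enough to check that these two composites agree on $\Gamma_c(\R^d,\mathcal{B}^{\{M_p\}})=\mathcal{E}'^{\{M_p\}}(\R^d)$. There the verification is a direct computation: $P(D)$ acts componentwise on $\mathcal{G}^{\{M_p\}}$ and commutes with convolution against the entire functions $\theta_n$, so
$$
P(D)\iota_c(f) = [(P(D)(f\ast\theta_n))_n] = [((P(D)f)\ast\theta_n)_n] = \iota_c(P(D)f),
$$
and both composites restrict to $f\mapsto\iota_c(P(D)f)$ on $\Gamma_c$, hence are equal as sheaf morphisms.

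Property $(iii)$ requires the most care, because $(QA)$ excludes any nontrivial compactly supported element of $\mathcal{E}^{\{M_p\}}$, so it cannot be reduced to $\Gamma_c$ directly; instead one localizes and inserts a cut-off. It suffices to prove $\iota_\Omega(\varphi)=\sigma_\Omega(\varphi)$ after restriction to each relatively compact $\Omega'\Subset\Omega$, since the two sections then agree on an open cover of $\Omega$ and $\mathcal{G}^{\{M_p\}}$ satisfies the separation axiom $(S1)$. Fix such an $\Omega'$ and choose $\kappa\in\mathcal{D}(\Omega)$ with $0\le\kappa\le1$ and $\kappa\equiv1$ on a neighborhood of $\overline{\Omega'}$. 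Then $\kappa\varphi\in\mathcal{E}'^{\{M_p\}}(\R^d)$ has compact support in $\Omega$ and satisfies $(\kappa\varphi)_{|\Omega'}=\varphi_{|\Omega'}$ in $\mathcal{B}^{\{M_p\}}(\Omega')$, so using that $\iota$ is a sheaf morphism together with property $(i)$,
$$
\iota_\Omega(\varphi)_{|\Omega'} = \iota_{\Omega'}((\kappa\varphi)_{|\Omega'}) = \iota_c(\kappa\varphi)_{|\Omega'} = [((\kappa\varphi)\ast\theta_n)_n]_{|\Omega'}.
$$
By Lemma \ref{lemma1} the right-hand side equals $[(\varphi)_n]_{|\Omega'}=\sigma_\Omega(\varphi)_{|\Omega'}$, which proves $(iii)$. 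The multiplicativity statement is then immediate, since $\sigma_\Omega$ is an algebra homomorphism: $\iota_\Omega(fg)=\sigma_\Omega(fg)=\sigma_\Omega(f)\sigma_\Omega(g)=\iota_\Omega(f)\iota_\Omega(g)$. I expect the genuine difficulty of the whole construction to lie not in this assembly but in the two inputs already secured --- the support-preserving embedding of Proposition \ref{embedding} and the softness of $\mathcal{G}^{\{M_p\}}$ in Proposition \ref{soft}; granting these, the theorem follows formally from the extension principle.
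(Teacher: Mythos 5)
Your proposal is correct and takes essentially the same route as the paper's proof: existence, uniqueness, injectivity, and properties $(i)$--$(ii)$ follow from Lemma \ref{extension} applied to $\iota_c$ of Proposition \ref{embedding} (using softness of both sheaves via Propositions \ref{constructioninfra} and \ref{soft}), and property $(iii)$ is obtained by localizing to $\Omega'\Subset\Omega$, inserting a cut-off $\kappa$, and invoking Lemma \ref{lemma1}. The only difference is presentational: you spell out the verification of $(ii)$ (both composites being sheaf morphisms that agree with $f\mapsto\iota_c(P(D)f)$ on compactly supported sections) which the paper dismisses as clear.
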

\begin{proof} Since $\mathcal{B}^{\{M_p\}}$  and $\mathcal{G}^{\{M_p\}}$ are soft sheaves (Propositions \ref{constructioninfra} and \ref{soft}), the existence and uniqueness of a sheaf embedding $\iota$ satisfying properties $(i)$ and $(ii)$ is clear from Lemma \ref{extension} and Proposition \ref{embedding}. We now show that property $(iii)$ is also satisfied. It suffices to show that $\iota_{\Omega}(f)_{|\Omega'} = \sigma_\Omega(f)_{|\Omega'}$ for all open sets $\Omega' \Subset \Omega$. Fix such a set $\Omega'$ and choose $\kappa \in \mathcal{D}(\Omega)$ such that $0 \leq \kappa \leq 1$ and $\kappa \equiv 1$ on a neighborhood of $\overline{\Omega'}$. Notice that $\iota_{\Omega}(f)_{|\Omega'} = \iota_{\Omega'}(f_{|\Omega'}) = \iota_{\Omega'}((\kappa f)_{|\Omega'})= \iota_{\Omega}(\kappa f)_{|\Omega'} = \iota_c(\kappa f) _{|\Omega'} $. Hence, it suffices to show that 
$$
((\kappa f) \ast \theta_n - f)_n \in \mathcal{E}^{\{M_p\}}_{\mathcal{N}}(\Omega'),
$$
but this has already been proved in Lemma \ref{lemma1}.
\end{proof}
We end this article with a remark concerning the optimality of the above embedding. It can be viewed as an analogue of Schwartz's impossibility result \cite{Schwartz} in the setting of infrahyperfunctions, which states that, under some natural assumptions, the property $(iii)$ from Theorem \ref{main-qa} \emph{cannot} be improved to also preserve the multiplication of all ultradifferentiable functions from a class with lower regularity than $\mathcal{E}^{\{M_p\}}(\Omega)$.
\begin{remark}
Let $N_p$ be another weight sequence satisfying $(M.1)$ and $(M.2)'$. When embedding $\mathcal{B}^{\{M_p\}}(\Omega)$ into an associative and commutative algebra $(\mathcal{A}^{\{M_p\},\{N_p\}}(\Omega), +, \circ)$, the following requirements seem to be natural:
\begin{enumerate}
\item[$(P.1)$] $\mathcal{B}^{\{M_p\}}(\Omega)$ is linearly embedded into $\mathcal{A}^{\{M_p\},\{N_p\}}(\Omega)$ and $f(x) \equiv 1$ is the unity in $\mathcal{A}^{\{M_p\},\{N_p\}}(\Omega)$.
\item[$(P.2)$] For each ultradifferential operator $P(D)$ of class $\{M_p\}$  there is a linear operator $P(D):  \mathcal{A}^{\{M_p\},\{N_p\}}(\Omega) \rightarrow \mathcal{A}^{\{M_p\},\{N_p\}}(\Omega)$ satisfying the following generalized Leibniz' rule
 $$
 P(D)( q \circ f ) = \sum_{\beta \leq \deg q} \frac{1}{\beta!} D^\beta q \circ (D^\beta P)(D)f,  
 $$
for every $f \in \mathcal{A}^{\{M_p\},\{N_p\}}(\Omega)$ and every polynomial $q$. Moreover, $P(D)_{|\mathcal{B}^{\{M_p\}}(\Omega)}$ coincides with the usual action of $P(D)$ on infrahyperfunctions of class $\{M_p\}$.
\item[$(P.3)$] $\circ_{|\mathcal{E}^{\{N_p\}}(\Omega) \times \mathcal{E}^{\{N_p\}}(\Omega)}$ coincides with the pointwise product of functions.
\end{enumerate}
\smallskip

The ensuing result imposes a limitation on the possibility of constructing such an algebra if $M_p \prec N_p$ (implying that $\mathcal{E}^{\{M_p\}}(\Omega)\subsetneq \mathcal{E}^{\{N_p\}}(\Omega)$). On the other hand, 
our differential algebra $\mathcal{G}^{\{M_p\}}(\Omega)$ satisfies all the properties $(P.1)$-$(P.3)$ for $M_p = N_p$ and therefore Theorem \ref{main-qa} is optimal in this sense.

\begin{theorem} \label{impossibility} Let $M_p$ be a weight sequence satisfying $(M.1)$, $(M.2)$, $(QA)$, and $(NE)$ and let $N_p$ be a weight sequence satisfying $(M.1)$ and $(M.2)'$. Suppose that $M_p \prec N_p$. Then, there is no associative and commutative algebra $\mathcal{A}^{\{M_p\},\{N_p\}}(\Omega)$ satisfying $(P.1)$-$(P.3)$. 
\end{theorem}
\begin{proof} The proof is basically the same as that of the corresponding result for non-quasianalytic ultradistributions \cite[Thm. 3.1]{DVV}, but we include it for the sake of completeness. Suppose $\mathcal{A}^{\{M_p\},\{N_p\}}(\Omega)$ is such an algebra. We have that $q \circ P(D)g =  q P(D)g$ for every ultradifferential operator $P(D)$ of class $\{M_p\}$, $g \in  \mathcal{E}^{\{N_p\}}(\Omega)$ and polynomial $q$; this follows by induction on the degree of $q$. Assume for simplicity that $0 \in \Omega$. Write $H(x)= H(x_1, \ldots, x_d): =  H(x_1) \otimes \cdots \otimes H(x_d)$, where $H(x_j)$ is the Heaviside function (characteristic function of the positive half-axis), and  $\operatorname*{p.v.}(x^{-1}) =  \operatorname*{p.v.}(x_1^{-1}) \otimes \cdots \otimes  \operatorname*{p.v.}(x_d^{-1})$, where $\operatorname*{p.v.}(x_j^{-1})$ is the principle value regularization of the function $x_j^{-1}$.  Let $f$ be either $H(x)$ or $\operatorname{p.v. }(x^{-1})$. By employing the global structural theorem for infrahyperfunctions of class $\{M_p\}$ \cite{Takiguchi}, we find $g \in \mathcal{E}^{\{N_p\}}(\R^d)$ and an ultradifferential operator $P(D)$ of class $\{M_p\}$ such that $P(D)g = f$. Set
$$
\boldsymbol{\partial} =  \frac{\partial^d}{\partial x_1 \cdots \partial x_d} \quad \mbox{and} \quad q(x)=x_1x_2\cdots x_d.
$$
The observation made at the beginning of the proof now yields $q\circ \boldsymbol{\partial} H = (q \boldsymbol\partial H)$ and $q \circ \operatorname{p.v. }(x^{-1}) = q \operatorname{p.v. }(x^{-1})$. Since $q \boldsymbol{\partial} H = 0$ and $ q \operatorname{p.v. }(x^{-1}) = 1$ in  $\mathcal{B}^{\{M_p\}}(\Omega)$, we obtain
$$
 \boldsymbol{\partial} H= \boldsymbol{\partial} H \circ ( q \circ \operatorname*{p.v.}(x^{-1})) =  (  \boldsymbol{\partial} H \circ  q) \circ \operatorname*{p.v.}(x^{-1}) = 0,
$$
contradicting $\boldsymbol{\partial} H = \delta \neq 0$ in $\mathcal{B}^{\{M_p\}}(\Omega)$ and the injectivity of $\mathcal{B}^{\{M_p\}}(\Omega)\to \mathcal{A}^{\{M_p\},\{N_p\}}(\Omega)$.
\end{proof}
\end{remark}


\begin{thebibliography}{999}


\setlength{\itemsep}{0pt}
\bibitem{B-M-M} J.~Bonet, R.~Meise, S.~ N.~ Melikhov, \emph{A comparison of two different ways to define classes of ultradifferentiable functions}, Bull. Belg. Math. Soc. Simon Stevin \textbf{14} (2007), 425--444.

\bibitem{Bony}J.-M.~Bony, P.~Schapira, \emph{Solutions hyperfonctions du probl\`eme de Cauchy}, in: Hyperfunctions and pseudo-differential equations (Proc. Conf., Katata, 1971), pp. 82--98,  Lecture Notes in Math. \ vol. \ 287, Springer-Verlag, Berlin-New York, 1973.

\bibitem{B-M-T} R.~W.~Braun, R.~Meise, B.~A.~Taylor, \emph{Ultradifferentiable functions and Fourier analysis}, Result. Math. \textbf{17} (1990), 206--237.


\bibitem{Chen} W.~Chen, Z.~Ditzian, \emph{Mixed and directional derivatives}, Proc. Amer. Math. Soc. \textbf{108} (1990), 177--185.

\bibitem{Colombeau84} J.-F.~Colombeau, \emph{New generalized functions and multiplication of distributions}, North-Holland Publishing Co., Amsterdam, 1984.

\bibitem{Colombeau85} J.-F.~Colombeau, \emph{Elementary introduction to new generalized functions}, North-Holland Publishing Co, Amsterdam, 1985.

\bibitem{Colombini83} F.~Colombini, E.~Jannelli, S.~Spagnolo, \emph{Well-posedness in the Gevrey classes of the Cauchy problem for a nonstrictly hyperbolic equation with coefficients depending on time}, Ann. Sc. Norm. Super. Pisa Cl. Sci. \textbf{10} (1983),  291--312

\bibitem{Colombini03} F.~Colombini, D.~Del Santo, M.~Reissig, \emph{On the optimal regularity of coefficients in hyperbolic Cauchy problems}, Bull. Sci. Math. \textbf{127} (2003), 328--347.

\bibitem{Colombini82} F.~Colombini, S.~Spagnolo, \emph{An example of a weakly hyperbolic Cauchy problem not well posed in $C^\infty$}, Acta Math. \textbf{148} (1982), 243--253.

\bibitem{D} A.~Debrouwere, \emph{Generalized function algebras containing spaces of periodic ultradistributions}, in: Generalized Functions and Fourier Analyis, pp. 59--78, Oper. Theory Adv. Appl. 260, Birkh\"{a}user, Basel, 2017.

\bibitem{DVV} A.~Debrouwere, H.~Vernaeve, J.~Vindas, \emph{Optimal embeddings of ultradistributions into differential algebras}, Monatsh. Math. \textbf{186} (2018), 407--438.

\bibitem{DV} A.~Debrouwere, J.~Vindas, \emph{Solution to the first Cousin problem for vector-valued quasianalytic functions}, Ann. Mat. Pura Appl. \textbf{196} (2017), 1983--2003.

\bibitem{Delcroix} A.~Delcroix, M.~F.~Hasler, S.~Pilipovi\'{c}, V.~Valmorin, \emph{Sequence spaces with exponent weights. Realizations of Colombeau type algebras}, Dissertationes Math. \textbf{447} (2007), 56 pp.

\bibitem{de Roever} J.-W.~de Roever, \emph{Hyperfunctional singular support of ultradistributions,} J. Fac. Sci. Univ. Tokyo Sect. IA Math. \textbf{31} (1985), 585--631. 

\bibitem{Garetto} C.~Garetto,  M.~Ruzhansky, \emph{Weakly hyperbolic equations with non-analytic coefficients and lower order terms}, Math. Ann. \textbf{357} (2013), 401--440.

\bibitem{garetto-r2015} C.~Garetto, M.~Ruzhansky, \emph{Hyperbolic second order equations with non-regular time dependent coefficients,} Arch. Ration. Mech. Anal. \textbf{217} (2015), 113--154. 

\bibitem{Gorny} A.~Gorny, \emph{Contribution \`a l' \'etude des fonctions d\'erivables d'une variable r\'eelle}, Acta Math. \textbf{71} (1939), 317--358.

\bibitem{Gramchev} T.~Gramchev, \emph{Nonlinear maps in spaces of distributions}, Math. Z. \textbf{209} (1992), 101--114.

\bibitem{Hormann-DeHoop} G.~H\"{o}rmann, M.~V.~de Hoop, \emph{Microlocal analysis and global solutions of some hyperbolic equations with discontinuous coefficients}, Acta Appl. Math. \textbf{67} (2001), 173--224.

\bibitem{Hormann-Ober-Pilipovic} G.~H\"ormann, M.~Oberguggenberger, S.~Pilipovi\'{c}, \emph{Microlocal hypoellipticity of linear partial differential operators with generalized functions as coefficients}, Trans. Amer. Math. Soc. \textbf{358} (2006), 3363--3383.

\bibitem{Komatsu71}H.~Komatsu, \emph{On the index of ordinary differential operators}, J. Fac. Sci. Univ. Tokyo Sect IA \textbf{18} (1971), 379--398.

\bibitem{Komatsu2} H.~Komatsu, \emph{Relative cohomology of sheaves of solutions of differential equations}, in: Hyperfunctions and pseudo-differential equations (Proc. Conf., Katata, 1971), pp. 192--261, Lecture Notes in Math. \ vol.\ 287, Springer-Verlag, Berlin-New York, 1973.

\bibitem{Komatsu} H.~Komatsu, \emph{Ultradistributions I. Structure theorems and a characterization},  J. Fac. Sci. Univ. Tokyo Sect. IA Math. \textbf{20} (1973), 25--105.

\bibitem{Komatsu3} H.~Komatsu,\emph{Ultradistributions. III. Vector-valued ultradistributions and the theory of kernels}, J. Fac. Sci. Univ. Tokyo Sect. IA Math. \textbf{29} (1982), 653--717.

\bibitem{Hormander} L.~H\"ormander,  \emph{Between distributions and hyperfunctions}, Asterisque \textbf{131} (1985), 89--106.

\bibitem{Hormander2} L.~H\"ormander, \emph{The analysis of linear partial differential operators. I. Distribution
theory and Fourier analysis}, Second edition, Springer-Verlag, Berlin, 1990.

\bibitem{Ito} Y.~Ito, \emph{Fourier hyperfunctions of general type}, J. Math. Kyoto Univ. \textbf{28} (1988), 213--265.

\bibitem{Junker} K.~Junker, \emph{Vektorwertige Fourierhyperfunktionen}, Diplomarbeit, D\"usseldorf, 1978.

\bibitem{Langenbruch} M.~Langenbruch, \emph{Ultradifferentiable functions on compact intervals}, Math. Nachr. \textbf{140} (1989), 109--126.

\bibitem{Martineau} A.~Martineau, \emph{Les hyperfonctions de M. Sato}, S\'eminaire Bourbaki, 13e ann\'ee (1960/61), \textbf{214}, 127--139.

\bibitem{Matsuzawa} T.~Matsuzawa, \emph{ A calculus approach to hyperfunctions I}, Nagoya Math. J. \textbf{108} (1987), 53--66.

\bibitem{Meise} R.~Meise, D.~Vogt, \emph{Introduction to functional analysis}, Clarendon, Oxford, 1997.

\bibitem{Morimoto} M.~Morimoto, \emph{An introduction to Sato's hyperfunctions}, A.M.S., Providence, 1993.

\bibitem{Ober} M.~Oberguggenberger, \emph{Multiplication of distributions and applications to partial differential equations}, Pitman Research Notes in Mathematics 259, Longman Scientific  $\&$ Technical, 1992.

\bibitem{Petzsche84} H.-J.~Petzsche, \emph{Generalized functions and the boundary values of holomorphic functions,} J. Fac. Sci. Univ. Tokyo Sect. IA Math. \textbf{31} (1984), 391--431. 


\bibitem{Pil94} S.~Pilipovi\'{c}, \emph{Characterization of bounded sets in spaces of ultradistributions}, Proc. Amer. Math. Soc. \textbf{120} (1994), 1191--1206.

\bibitem{Pil2005} S.~Pilipovi\'{c}, \emph{Generalized hyperfunctions and algebra of megafunctions,} Tokyo J. Math. \textbf{28} (2005), 1--12. 

\bibitem{Prangoski} B.~Prangoski, \emph{Laplace transform in spaces of ultradistributions}, Filomat \textbf{27} (2013), 747--760.

\bibitem{Sato} M.~Sato, \emph{Theory of hyperfunctions I, II}, J. Sci. Univ. Tokyo Sect. I \textbf{8} (1959), 139--193; \textbf{8} (1960), 387--437.

\bibitem{Schapira} P.~ Schapira, \emph{Th\'eorie des hyperfonctions}, Lec.\ Notes Math.\ vol.\ 126, Springer-Verlag, Berlin-New York, 1970.

\bibitem{Schwartz} L.~Schwartz, \emph{Sur l'impossibilit\'{e} de la multiplication des distributions}, C. R. Acad. Sci. Paris \textbf{239} (1954), 847--848.

\bibitem{Takiguchi} T.~Takiguchi, \emph{On the structure of hyperfunctions and  ultradistributions}, Publ. Res. Inst. Math. Sci.  Kokyuroku \textbf{1861} (2013), 71--82.


\bibitem{Treves} F.~Tr\`{e}ves, \emph{Topological vector spaces, distributions and kernels}, Academic Press, New York, 1967.

\end{thebibliography}
\end{document}